\newtheorem{theorem}{Theorem}[section]
\newtheorem{proposition}[theorem]{Proposition}
\newtheorem{corollary}[theorem]{Corollary}
\theoremstyle{definition}
\newtheorem{definition}{Definition}[section]
\theoremstyle{remark}
\newtheorem{remark}{Remark}
\begin{document}
\title{Computations of de Rham cohomology rings of classifying stacks at torsion
primes}
\author{Eric Primozic}
\maketitle
\setcounter{secnumdepth}{1}
\section*{Introduction}
\addcontentsline{toc}{section}{Introduction}
\ 
\
\indent Let $G$ be a smooth affine algebraic group over a commutative ring $R$. In \cite{Tot}, Totaro defines the Hodge cohomology group $H^{i}(BG, \Omega^{j})$ for $i, j \geq 0$ to be the $i$th \'etale cohomology group of the sheaf of differential forms $\Omega^{j}$ over $R$ on the big \'etale site of the classifying stack $BG$. For $n \geq 0$, let $H^{n}_{\textup{H}}(BG/R) \coloneqq \oplus _{j} H^{j}(BG, \Omega^{n-j})$ denote the total Hodge cohomology group of degree $n$. De Rham cohomology groups $H^{n}_{\textup{dR}}(BG/R)$ are defined to be the \'etale cohomology groups of the de Rham complex of $BG$. Let $\frak{g}$ denote the Lie algebra associated to $G$ and let $O(\frak{g})=S(\frak{g}^{*})$ denote the ring of polynomial functions on $\frak{g}$. In \cite[Corollary 2.2]{Tot}, Totaro showed that the Hodge cohomology of $BG$ is related to the representation theory of $G$:
$$H^{i}(BG, \Omega^{j}) \cong H^{i-j}(G, S^{j}(\frak{g}^{*})).$$  

\indent Let $G$ be a split reductive group defined over $\mathbb{Z}$. From the work of Bhatt-Morrow-Scholze in p-adic Hodge theory \cite[Theorem 1.1]{BMS}, one might expect that 

\begin{equation} \label{comparison special fiber and generic} 
\textup{dim}_{\mathbb{F}_{p}} H^{i}_{\textup{dR}}(BG_{\mathbb{F}_{p}}/\mathbb{F}_{p}) \geq \textup{dim}_{\mathbb{F}_{p}}   H^{i}(BG_{\mathbb{C}}, \mathbb{F}_{p}) 
\end{equation} 
for all primes $p$ and $i \geq 0$. The results from \cite{BMS} do not immediately apply to $BG$ since $BG$ is not proper as a stack over $\mathbb{Z}$. For $p$ a non-torsion prime of a split reductive group $G$ defined over $\mathbb{Z}$, Totaro showed that \begin{equation} \label{nontorsion}H^{*}_{\textup{dR}}(BG_{\mathbb{F}_{p}}/\mathbb{F}_{p}) \cong H^{*}(BG_{\mathbb{C}}, \mathbb{F}_{p})\end{equation} \cite[Theorem 9.2]{Tot}. It remains to compare $H^{*}_{\textup{dR}}(BG_{\mathbb{F}_{p}}/\mathbb{F}_{p})$ with $H^{*}(BG_{\mathbb{C}}, \mathbb{F}_{p})$ for $p$ a torsion prime of $G$. For $n \geq 3$, $2$ is a torsion prime for the split group $SO(n)$. Totaro showed that $$H^{*}_{\textup{dR}}(BSO(n)_{\mathbb{F}_{2}}/\mathbb{F}_{2}) \cong H^{*}(BSO(n)_{\mathbb{C}}, \mathbb{F}_{2}) \cong \mathbb{F}_{2}[w_{2}, \ldots, w_{n}]$$ as graded rings where $w_{2}, \ldots, w_{n}$ are the Stiefel-Whitney classes \cite[Theorem 11.1]{Tot}. In general, the rings $H^{*}_{\textup{dR}}(BG_{\mathbb{F}_{p}}/\mathbb{F}_{p})$ and $H^{*}(BG_{\mathbb{C}}, \mathbb{F}_{p})$ are different though. For example, $$\textup{dim}_{\mathbb{F}_{2}} H^{32}_{\textup{dR}}(B\textup{Spin}(11)_{\mathbb{F}_{2}}/\mathbb{F}_{2}) > \textup{dim}_{\mathbb{F}_{2}} H^{32}(B\textup{Spin}(11)_{\mathbb{C}}, \mathbb{F}_{2})$$ \cite[Theorem 12.1]{Tot}.

\indent  In this paper, we verify inequality \ref{comparison special fiber and generic} for more examples. For the torsion prime $2$ of the split reductive group $G_{2}$ over $\mathbb{Z}$, we show that $$H^{*}_{\textup{dR}}(B(G_{2})_{\mathbb{F}_{2}}/\mathbb{F}_{2}) \cong H^{*}(B(G_{2})_{\mathbb{C}}, \mathbb{F}_{2}) \cong \mathbb{F}_{2}[y_{4}, y_{6}, y_{7}]$$ as graded rings where $|y_{i}|=i$ for $i=4, 6, 7.$ For the spin groups, we show that \begin{equation} \label{isospin} H^{*}_{\textup{dR}}(B\textup{Spin}(n)_{\mathbb{F}_{2}}/\mathbb{F}_{2}) \cong H^{*}(B\textup{Spin}(n)_{\mathbb{C}}, \mathbb{F}_{2}) \end{equation} for $7 \leq n \leq 10$. Note that $2$ is a torsion prime for $\textup{Spin}(n)$ for $n \geq 7$. The isomorphism \ref{isospin} holds for $1 \leq n \leq 6$ by the ``accidental'' isomorphisms for spin groups along with \ref{nontorsion}.

\indent For $n=11$, we make a full computation of the de Rham cohomology of $B\textup{Spin}(n)_{\mathbb{F}_{2}}$: $$H_{\textup{dR}}^{*}(B\textup{Spin}(11)_{\mathbb{F}_{2}}/\mathbb{F}_{2}) \cong \mathbb{F}_{2}[y_4,y_6,y_7,y_8,y_{10},y_{11},y_{32}]/ (y_{7}y_{10}+y_{6}y_{11})$$ where $|y_{i}|=i$ for all $i$. We can compare this result with the computation of the singular cohomology of $B\textup{Spin}(11)_{\mathbb{C}}$ given by Quillen \cite{Quillen}:$$H^{*}(B\textup{Spin}(11)_{\mathbb{C}}, \mathbb{F}_{2})\cong \mathbb{F}_{2}[w_4,w_6,w_7,w_8,w_{10},w_{11},w_{64}]/ (w_{7}w_{10}+w_{6}w_{11},$$
\begin{flushright}$ w_{11}^{3}+w_{11}^{2}w_{7}w_{4}+w_{11}w_{8}w_{7}^{2})$\end{flushright}
where $|w_{i}|=i$ for all $i$. Equivalently, $$H^{*}(B\textup{Spin}(11)_{\mathbb{C}}, \mathbb{F}_{2})\cong H^{*}(BSO(11)_{\mathbb{C}}, \mathbb{F}_{2})/J \otimes \mathbb{F}_{2}[w_{64}]$$ where $J$ is the ideal generated by the regular sequence $$w_{2}, Sq^{1}(w_{2}), Sq^{2}Sq^{1}(w_{2}), \ldots, Sq^{16}Sq^{8} \cdots Sq^{1}w_{2}.$$ Thus, the rings $H^{*}_{\textup{dR}}(B\textup{Spin}(n)_{\mathbb{F}_{2}}/\mathbb{F}_{2})$ and $H^{*}(B\textup{Spin}(n)_{\mathbb{C}}, \mathbb{F}_{2})$ are not isomorphic in general even though $H^{*}_{\textup{dR}}(BSO(n)_{\mathbb{F}_{2}}/\mathbb{F}_{2}) \cong H^{*}(BSO(n)_{\mathbb{C}}, \mathbb{F}_{2})$ for all $n$. Steenrod squares on de Rham cohomology over a base field of characteristic $2$ have not yet been constructed. If they exist, our calculation suggests that their action on $H^{*}_{\textup{dR}}(BSO(n)_{\mathbb{F}_{2}}/\mathbb{F}_{2}) \cong H^{*}(BSO(n)_{\mathbb{C}}, \mathbb{F}_{2})$ would have to be different from the action of the topological Steenrod operations.

\section*{Acknowledgments} I thank Burt Totaro for suggesting this project to me and for providing advice. 
\section{Preliminaries}
\indent In this section, we recall results from \cite{Tot} that will be used in our computations. These results were also used by Totaro in \cite[Theorem 11.1]{Tot} to compute the de Rham cohomology of $BSO(n)_{k}$ for $k$ a field of characteristic $2$. 

\indent 

\indent The first result we mention \cite[Proposition 9.3]{Tot} is an analogue of the Leray-Serre spectral sequence from topology.

\begin{proposition} \label{Serre SS} Let $G$ be a split reductive group defined over a field $F$ and let $P$ be a parabolic subgroup of $G$ with Levi quotient $L$. There exists a spectral sequence of algebras $$E^{i,j}_{2}=H^{i}_{\textup{H}}(BG/F) \otimes H^{j}_{\textup{H}}((G/P)/F) \Rightarrow H^{i+j}_{\textup{H}}(BL/F).$$
\end{proposition}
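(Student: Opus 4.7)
The plan is to mimic the classical topological proof of the Serre spectral sequence. Topologically one has a fibration $G/P \to BP \to BG$ to which Serre applies, together with the homotopy equivalence $BP \simeq BL$ coming from $P$ being the semidirect product of $L$ and the contractible unipotent radical $U$. The strategy in the algebraic setting is to factor $BL \to BG$ through $BP$ via the Levi splitting $L \hookrightarrow P$, to establish separately an isomorphism $H^*_{\textup{H}}(BL/F) \cong H^*_{\textup{H}}(BP/F)$ and a Leray-type spectral sequence for $BP \to BG$, and then to combine the two.

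For the first ingredient, the canonical inclusion $L \hookrightarrow P$ (a section of the projection $P \twoheadrightarrow P/U = L$) induces a morphism $BL \to BP$, and I would show it gives an isomorphism on Hodge cohomology. Using Totaro's formula $H^i(BH,\Omega^j) \cong H^{i-j}(H, S^j(\frak{h}^*))$, this translates into a comparison of rational cohomology of algebraic groups with symmetric-power coefficients. The $L$-equivariant splitting $\frak{p} = \frak{l} \oplus \frak{u}$ together with the Hochschild-Serre spectral sequence for the extension $1 \to U \to P \to L \to 1$ reduce the problem to vanishing of higher $U$-cohomology of the specific modules $S^j(\frak{p}^*)$, which in turn one verifies from the presentation of $U$ as a successive extension of copies of $\mathbb{G}_a$ normalized by $L$.

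For the second ingredient, $\pi\colon BP \to BG$ is representable and proper with geometric fibers the partial flag variety $G/P$. The Leray spectral sequence applied to the Hodge sheaves reads
$$E_2^{p,q} = H^p(BG, R^q\pi_*\Omega^{\bullet}) \Rightarrow H^{p+q}(BP, \Omega^{\bullet}).$$
Connectedness of $G$ forces the higher direct images $R^q\pi_*\Omega^j$ to be constant on $BG$ with stalk $H^q(G/P,\Omega^j)$. The Bruhat decomposition exhibits $G/P$ as a disjoint union of affine cells, so $H^*_{\textup{H}}(G/P)$ is a free $F$-module concentrated on the Hodge diagonal and generated by Schubert classes; this flatness allows the $E_2$ term to split as the desired tensor product, and multiplicativity of the spectral sequence of algebras is inherited from the standard cup-product structure on Leray.

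The main obstacle I anticipate is the first ingredient, the comparison $H^*_{\textup{H}}(BP/F) \cong H^*_{\textup{H}}(BL/F)$. In topology this is automatic from the contractibility of $U$, but in algebraic group cohomology the unipotent radical can carry nontrivial higher cohomology with twisted coefficients. Pushing through the necessary vanishing requires a careful use of the specific polynomial coefficient modules $S^j(\frak{p}^*)$ arising from the Hodge filtration, coupled with the compatibility between the Levi decomposition of $P$ and the $L$-module decomposition of $\frak{p}$; the intricacy here is what makes the statement genuinely an algebraic result rather than a mere transcription of its topological counterpart.
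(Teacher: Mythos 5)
You should first note that the paper does not prove this proposition at all: it is quoted verbatim from Totaro \cite[Proposition 9.3]{Tot}, so the only possible comparison is with Totaro's argument. Your overall architecture --- factor $BL\to BG$ through $BP$ via the Levi splitting, compare $H^{*}_{\textup{H}}(BP/F)$ with $H^{*}_{\textup{H}}(BL/F)$, and run a Leray-type spectral sequence for the proper representable morphism $BP\to BG$ with fiber $G/P$, using the Bruhat decomposition and cycle classes to control the fiberwise Hodge cohomology --- is the right shape and is essentially how the result is obtained. But the mechanism you propose for the first, and as you say crucial, ingredient is a genuine gap: the vanishing of higher $U$-cohomology of $S^{j}(\mathfrak{p}^{*})$ is simply false, in every characteristic. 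Already for trivial coefficients $H^{1}(\mathbb{G}_{a},k)\neq 0$ (in characteristic $p$ it is infinite-dimensional, spanned by the additive characters $x^{p^{i}}$; in characteristic $0$ it is $\mathfrak{u}^{*}$), and for the modules you actually need the failure is just as stark: for $P=B\subset SL_{2}$ in characteristic $2$ the unipotent radical acts trivially on $\mathfrak{b}$, so $H^{>0}(U,S^{j}\mathfrak{b}^{*})\cong H^{>0}(U,k)\otimes S^{j}\mathfrak{b}^{*}\neq 0$. Consequently no d\'evissage of $U$ into copies of $\mathbb{G}_{a}$ normalized by $L$ can yield the vanishing you want, and the Hochschild--Serre spectral sequence for $1\to U\to P\to L\to 1$ does not collapse for the reason you give.

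What is true, and what the comparison $H^{*}(BP,\Omega^{j})\cong H^{*}(BL,\Omega^{j})$ actually requires, is the vanishing of $H^{a}(L,H^{b}(U,S^{j}\mathfrak{p}^{*}))$ for $b>0$, and this is a weight argument rather than a cohomological triviality of $U$: every $T$-weight occurring in the positive-degree normalized cochains for $U$ with coefficients in $S^{j}\mathfrak{p}^{*}$ has strictly negative coefficient sum on the simple roots outside $L$ (the weights of $\mathfrak{p}^{*}$ contribute non-positively there, while the augmentation ideal of $k[U]$ contributes strictly negatively), and computing $H^{*}(L,-)$ through a Borel subgroup of $L$ (where only roots of $L$ are added and $T$ is linearly reductive) shows no weight-zero class can survive. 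Since $L$ is not linearly reductive in characteristic $p$, some such argument is unavoidable; without it your first ingredient is unproved, and with it the obstacle you flag is resolved in a way quite different from the vanishing you propose. Two smaller points in your second ingredient also need repair: ``connectedness of $G$ forces $R^{q}\pi_{*}$ to be constant'' is not correct as stated --- these higher direct images are $G$-representations $H^{q}(G/P,\Omega^{b})$, and their triviality comes from the fact that they are spanned by Schubert cycle classes (Proposition \ref{flagvarietyChow}), on which a connected group acts trivially --- and since $\Omega^{j}_{BP}$ is not pulled back from $BG$, one must use the filtration with graded pieces $\pi^{*}\Omega^{a}_{BG}\otimes\Omega^{j-a}_{BP/BG}$ to assemble the displayed $E_{2}$-term and its algebra structure.
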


Proposition \ref{Serre SS} is the main tool that we will use to compute Hodge cohomology rings of classifying stacks. To apply Proposition \ref{Serre SS}, we will choose a parabolic subgroup $P$ for which $H^{*}_{\textup{H}}(BL/F)$ is a polynomial ring.

\indent 
To fill in the $0$th column of the $E_{2}$ page in Proposition \ref{Serre SS}, we use a result of Srinivas \cite{Sri}.

\begin{proposition} \label{flagvarietyChow} Let $G$ be split reductive over a field $F$ and let $P$ be a parabolic subgroup of $G$. The cycle class map $$CH^{*}(G/P) \otimes _{\mathbb{Z}} F \to H^{*}_{\textup{H}}((G/P)/F)$$ is an isomorphism.
\end{proposition}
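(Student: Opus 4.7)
The plan is to use the Bruhat decomposition of $G/P$, which gives a cellular decomposition into Schubert cells indexed by $W^P$, the set of minimal-length representatives of $W/W_P$, with each cell $BwP/P$ isomorphic to $\mathbb{A}^{\ell(w)}_F$. Ordering the cells by length yields a filtration $\emptyset = X_{-1} \subset X_0 \subset \cdots \subset X_N = G/P$ by closed $F$-subschemes such that each difference $X_i \setminus X_{i-1}$ is a disjoint union of affine spaces. This reduces the problem to an inductive comparison of Chow groups and Hodge cohomology along this filtration.

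On the Chow side, iterating the localization exact sequence and using that $CH^{*}(\mathbb{A}^n_F) = \mathbb{Z}$ is concentrated in codimension zero shows by induction that $CH^{*}(G/P)$ is a free abelian group on the Schubert classes $[\overline{BwP/P}]$, the class indexed by $w$ lying in codimension $\dim(G/P)-\ell(w)$. On the Hodge side, I would run a parallel induction using a Gysin long exact sequence for coherent cohomology of sheaves of K\"ahler differentials: for a smooth closed subvariety $Z \hookrightarrow Y$ of pure codimension $c$ with open complement $U$, a sequence
\begin{equation*}
\cdots \to H^{i-c}(Z, \Omega^{j-c}_{Z/F}) \to H^{i}(Y, \Omega^{j}_{Y/F}) \to H^{i}(U, \Omega^{j}_{U/F}) \to H^{i-c+1}(Z, \Omega^{j-c}_{Z/F}) \to \cdots.
\end{equation*}
Combined with the vanishing $H^i(\mathbb{A}^n_F, \Omega^j) = 0$ for $(i,j) \neq (0,0)$, this yields by induction that $H^j(G/P, \Omega^i) = 0$ for $i \neq j$ and that $H^i(G/P, \Omega^i)$ is a free $F$-module whose rank equals the number of Schubert cells of codimension $i$.

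Finally I would compare the two computations via the cycle class map, using naturality to see that it intertwines the localization and Gysin sequences, so that each Schubert class $[\overline{BwP/P}]$ is sent to a generator of the corresponding summand of $H^{*}_{\textup{H}}((G/P)/F)$. The main obstacle is setting up the Gysin sequence cleanly, since the strata $X_i$ are generally singular and one cannot apply a smooth Gysin sequence to the pair $(X_i, X_{i-1})$ directly. The standard remedy is to induct instead on the open subsets $U_i := G/P \setminus X_{N-i}$, so that at each stage one is attaching a single affine cell as a smooth closed subscheme of a smooth open subset, or equivalently to appeal to excision using the explicit affine structure of the cells. Once the Gysin input is in place, the rank count on both sides matches and the remainder is a routine compatibility check.
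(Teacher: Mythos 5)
The paper does not prove this proposition at all: it is quoted directly from Srinivas \cite{Sri}, whose point is precisely to construct cycle classes and Gysin (proper pushforward) maps in Hodge cohomology and to deduce the cellular case from them. Your proposal, however, contains a genuine gap that would sink it as written. The input ``$H^i(\mathbb{A}^n_F,\Omega^j)=0$ for $(i,j)\neq(0,0)$'' is false: affine space is affine, so higher coherent cohomology vanishes, but $H^0(\mathbb{A}^n_F,\Omega^j)$ is the (infinite-dimensional) space of all algebraic $j$-forms for every $0\le j\le n$. Hodge cohomology, unlike Chow groups or \'etale/singular cohomology, is not $\mathbb{A}^1$-homotopy invariant, so the cohomology of the open strata is enormous and no rank count along them can work. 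For the same reason the proposed Gysin sequence is not exact: take $Y=\mathbb{P}^1$, $Z$ a rational point, $U=\mathbb{A}^1$, $j=c=1$; exactness at $H^0(U,\Omega^1_U)$ would force the infinite-dimensional space $F[t]\,dt$ to inject into $H^0(Z,\mathcal{O}_Z)=F$. The correct boundary terms are the local cohomology groups $H^*_Z(Y,\Omega^j_Y)$, and for an individual Hodge sheaf there is no purity isomorphism $H^{*}_Z(Y,\Omega^{j}_Y)\cong H^{*-c}(Z,\Omega^{j-c}_Z)$: already $H^1_{\{0\}}(\mathbb{A}^1,\mathcal{O})\cong F[t,t^{-1}]/F[t]$ is infinite-dimensional. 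So the localization-style induction over open complements, which does compute $CH^*(G/P)$, cannot simply be transported to the Hodge side sheaf by sheaf.

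The Chow-theoretic half of your argument and the reduction to the Bruhat stratification are fine, but the Hodge-theoretic half has to be organized around proper maps and closed strata rather than open complements. This is what Srinivas does: Gysin maps are constructed for proper morphisms of smooth varieties (via duality/cohomology with supports), cycle classes of the Schubert varieties are defined by pushing forward from resolutions (e.g.\ Bott--Samelson varieties), and the isomorphism for cellular varieties is then deduced --- for instance via the decomposition of the diagonal available for cellular varieties --- together with the vanishing $H^i(G/P,\Omega^j)=0$ for $i\neq j$. If you want a self-contained proof you should follow that route (or cite \cite{Sri} as the paper does), not a homotopy-invariance/localization induction.
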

Under the cycle class map, $CH^{i}(G/P) \otimes _{\mathbb{Z}}F$ maps to $H^{i}(G/P, \Omega^{i})$. From the work of Chevalley \cite{Chev} and Demazure \cite{Dem}, $CH^{*}(G/P)$ is independent of the field $F$ and is isomorphic to the singular cohomology ring $H^{*}(G_{\mathbb{C}}/P_{\mathbb{C}}, \mathbb{Z})$. 

\indent The last piece of information we will use to compute $H^{*}_{\textup{H}}(BG/F)$ is the ring of $G$-invariants $O(\mathfrak{g})^{G}=\oplus_{i}H^{i}(BG, \Omega^{i})$. Let $T$  be a maximal torus in $G$ with Lie algebra $\mathfrak{t}$ and Weyl group $W$. There is a restriction homomorphism \begin{equation} \label{restriction} O(\mathfrak{g})^{G} \to O(\mathfrak{t})^{W}. \end{equation} We will need the following theorem which is due to Chaput and Romagny \cite[Theorem 1.1]{ChaRom}.

\begin{theorem} \label{theoreminvariants} Assume that $G$ is simple over a field $F$. Then the restriction homomorphism \ref{restriction} is an isomorphism unless $\textup{char}(F)=2$ and $G_{\overline{F}}$ is a product of copies of $Sp(2n)$ for some $n \in \mathbb{N}.$
\end{theorem}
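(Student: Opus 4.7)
The plan is to reduce to $F = \overline{F}$ algebraically closed (both $O(\mathfrak{g})^G$ and $O(\mathfrak{t})^W$ commute with faithfully flat base change) and then prove injectivity and surjectivity separately. Injectivity should be uniform across characteristics: the regular semisimple locus in $\mathfrak{g}$ is a nonempty, hence dense, open subset of the irreducible affine space $\mathfrak{g}$, and by conjugacy of maximal tori every regular semisimple element lies in a $G$-translate of $\mathfrak{t}$. Thus $G \cdot \mathfrak{t}$ is dense in $\mathfrak{g}$, so any $G$-invariant polynomial on $\mathfrak{g}$ vanishing on $\mathfrak{t}$ vanishes identically.

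The real content is surjectivity, which I would approach by the classification of simple groups. For type $A_n$, the coefficients of the characteristic polynomial on the standard representation give a free polynomial algebra of the correct degrees in every characteristic, matching the elementary symmetric functions on $\mathfrak{t}$. For types $B_n, C_n, D_n$ the strategy is similar, using characteristic polynomials of the defining representation together with a Pfaffian-type invariant in type $D_n$; one then verifies that in small characteristics these invariants still surject onto $O(\mathfrak{t})^W$. For the exceptional types $G_2, F_4, E_6, E_7, E_8$, I would rely on a finite case-check using an explicit Chevalley basis together with the known degrees of the fundamental invariants.

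The expected obstruction, accounting for the excluded case in the theorem, appears for $G = Sp(2n)$ in characteristic $2$. The Weyl group is the hyperoctahedral group $(\mathbb{Z}/2)^n \rtimes S_n$, acting on $\mathfrak{t} \cong F^n$ by sign changes and permutations. In characteristic $2$ the sign changes act trivially, so $O(\mathfrak{t})^W \cong F[e_1, \ldots, e_n]$ is the full ring of symmetric polynomials and in particular contains the degree-$1$ element $e_1 = a_1 + \cdots + a_n$. On the other hand, every $X \in \mathfrak{sp}(2n)$ satisfies $\textup{tr}(X)=0$ because the defining symplectic relation forces the diagonal blocks of $X$ to be negative transposes of one another, and the only $Sp(2n)$-invariant linear form on $\mathfrak{sp}(2n)$ is (up to scalar) this trace. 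Hence $O(\mathfrak{g})^G$ has no nonzero element of degree $1$ and $e_1$ cannot lie in the image, exhibiting the failure concretely.

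The main obstacle I anticipate is surjectivity in characteristics $2$ and $3$ for the orthogonal and exceptional groups, where the classical Chevalley generators (traces of powers of the adjoint representation) degenerate and must be replaced with other polynomial invariants of the correct degrees. A more uniform alternative would be to reinterpret the statement as an isomorphism of adjoint quotients $\mathfrak{t}/\!/W \to \mathfrak{g}/\!/G$ and to prove both sides are normal integral schemes of the same dimension whose underlying map on geometric points is bijective via Jordan decomposition, but making this work in bad characteristics would still require controlling the nilpotent contribution and verifying flatness of the adjoint quotient explicitly.
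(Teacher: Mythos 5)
You should first be aware that the paper does not prove this statement at all: it is imported verbatim from Chaput--Romagny \cite[Theorem 1.1]{ChaRom}, so what you set out to do is reprove a substantial external theorem. Measured against that, your outline has a genuine gap: the surjectivity step in bad characteristics, which is the entire content of the theorem, is never actually carried out --- it is deferred with ``one then verifies'' for $B_n$, $C_n$, $D_n$ in characteristic $2$ and ``a finite case-check'' for the exceptional types, and you yourself flag this as the main obstacle. Producing invariants of $\mathfrak{g}$ (not merely of $\mathfrak{t}$) whose restrictions generate $O(\mathfrak{t})^{W}$ when the classical generators degenerate is exactly where the difficulty lives; moreover the answer is sensitive to the isogeny type, since in characteristic $p$ dividing the order of the fundamental group the Lie algebras of isogenous groups differ (e.g.\ $\mathfrak{sl}_{p}$ versus $\mathfrak{pgl}_{p}$), your characteristic-polynomial generators in type $A$ are not even defined for the adjoint form, and $O(\mathfrak{t})^{W}$ itself need not look as it does in characteristic $0$. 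As written, the proposal is a program rather than a proof, and the cases it leaves open are precisely the ones Chaput--Romagny's paper exists to settle.

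Second, the claim that injectivity is ``uniform across characteristics'' is not correct as stated, and the failure occurs exactly at the dividing line of the theorem: for $Sp(2n)$ in characteristic $2$ the differentials of the long roots vanish identically on $\mathfrak{t}$, so $\mathfrak{t}$ contains no regular semisimple elements and $G\cdot\mathfrak{t}$ is not dense (for $SL_{2}=Sp(2)$ in characteristic $2$, $\mathfrak{t}$ is central, so $G\cdot\mathfrak{t}=\mathfrak{t}$). Your density argument does work in the non-excluded cases, but it silently uses the nontrivial fact that there no root lies in $p\,X^{*}(T)$, i.e.\ $d\alpha\not\equiv 0$ on $\mathfrak{t}$ for every root $\alpha$; that fact must be checked, and it is exactly the condition that singles out $Sp(2n)$ in characteristic $2$. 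Note also that in the excluded case the restriction map is still injective (for $\mathfrak{sl}_{2}$ in characteristic $2$ the image is $k[a^{2}]\subsetneq k[a]=O(\mathfrak{t})^{W}$), so the true failure is surjectivity, consistent with your degree-$1$ discussion --- but there too the assertion that the zero trace is the only invariant linear form on $\mathfrak{sp}(2n)$ needs an argument (e.g.\ weight considerations plus invariance under a root subgroup) rather than being taken for granted.
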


\indent From the rings $O(\mathfrak{g})^{G}, CH^{*}(G/P), H^{*}_{\textup{H}}(BL/F)$, we will be able to determine the $E_{\infty}$ terms of the spectral sequence in Proposition \ref{Serre SS}. This will allow us to determine $H^{*}_{\textup{H}}(BG/F)$ by using the following version of the Zeeman comparison theorem \cite[Theorem VII.2.4]{MimTod}. 

\begin{theorem} \label{Zeeman}
Fix a field $F$. Let $\{\bar{E}_{r}^{i,j}\}, \{E_{r}^{i,j}\}$ be first quadrant (cohomological) spectral sequences of $F$-vector spaces such that $\bar{E}_{2}^{i,j}=\bar{E}_{2}^{i,0} \otimes _{F} \bar{E}_{2}^{0,j}$ and $E_{2}^{i,j}=E_{2}^{i,0} \otimes _{F} E_{2}^{0,j}$ for all $i,j$. Let $\{f_{r}^{i,j}: \bar{E}_{r}^{i,j} \to E_{r}^{i,j} \}$ be a morphism of spectral sequences such that $f^{i,j}_{2}=f^{i,0}_{2} \otimes f^{0,j}_{2}$ for all $i,j$. Fix $N, Q \in \mathbb{N}.$ Assume that $f_{\infty}^{i,j}$ is an isomorphism for all $i,j$ with $i+j<N$ and an injection for $i+j=N.$ If $f_{2}^{0,i}$ is an isomorphism for all $i<Q$ and an injection for $i=Q,$ then $f_{2}^{i,0}$ is an isomorphism for all $i<\min{(N, Q+1)}$ and an injection for $i=\min{(N,Q+1)}.$

\end{theorem}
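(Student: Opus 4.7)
Let $M := \min(N, Q+1)$. I would proceed by strong induction on $i$ to show that $f_2^{i,0}$ is an isomorphism for $i < M$ and an injection for $i = M$. The base case $i = 0$ is immediate from the hypothesis on $f_2^{0,0}$. For the inductive step, assume $f_2^{k,0}$ is an isomorphism for every $k < i$. Since the tensor-product decomposition $E_2^{k,j} = E_2^{k,0} \otimes_F E_2^{0,j}$ is compatible with the comparison map $f_2^{k,j} = f_2^{k,0} \otimes f_2^{0,j}$, and since over a field the tensor product of two isomorphisms is an isomorphism while the tensor of an isomorphism with an injection is an injection, the hypothesis on the bottom row upgrades to: $f_2^{k,j}$ is an isomorphism for $k < i$ and $j < Q$, and an injection for $k < i$ and $j = Q$.

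The core of the argument then bridges $f_2^{i,0}$ and $f_\infty^{i,0}$, where the latter is an isomorphism (or an injection when $i = N$) by hypothesis since $i \leq M \leq N$. Because the spectral sequences are first quadrant and $r \geq 2$, the outgoing differential $d_r: E_r^{i,0} \to E_r^{i+r, 1-r}$ has target outside the first quadrant and hence vanishes; thus $E_r^{i,0}$ changes only by quotienting by $\mathrm{im}(d_r^{i-r, r-1})$, and this source vanishes for $r > i$, giving $E_\infty^{i,0} = E_{i+1}^{i,0}$. For each $2 \leq r \leq i$ there are parallel short exact sequences
$$0 \to \mathrm{im}\, d_r^{i-r, r-1} \to E_r^{i,0} \to E_{r+1}^{i,0} \to 0$$
in the two spectral sequences, compatible with $f_r$. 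Starting from $r = i$ and descending to $r = 2$, five-lemma-style diagram chases at each step propagate control from $f_{r+1}^{i,0}$ down to $f_r^{i,0}$, provided one controls the comparison on the source $E_r^{i-r, r-1}$.

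The main obstacle is exactly this auxiliary control: establishing that $f_r^{i-r, r-1}$ is an isomorphism (injection in the borderline case $r - 1 = Q$, which occurs only when $i = M = Q+1$ and $r = i$) for each $2 \leq r \leq i$. At the $E_2$ level this is already provided by the upgraded inductive hypothesis, since $i - r < i$ and $r - 1 \leq i - 1 \leq M - 1 \leq Q$. Propagating from $E_2$ to $E_r$ at the cell $(i-r, r-1)$ requires a nested induction on page number. The critical index computations are that for each intermediate page $2 \leq s < r$, the outgoing differential target $(i - r + s, r - s)$ satisfies column $i - r + s < i$ and row $r - s \leq r - 2 \leq Q - 1 < Q$, while the incoming source $(i - r - s, r + s - 2)$ is either zero (when $s > i - r$) or has row $r + s - 2 \leq i - 2 \leq Q - 1 < Q$. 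Analogous checks handle the cells reached at still deeper pages, so every relevant cell remains inside the controlled region $R = \{(k, j) : 0 \leq k < i,\ 0 \leq j \leq Q\}$ where the upgraded hypothesis supplies the needed isomorphism. Chaining the nested induction on pages with the downward five-lemma descent across $r$, and tracking the borderline injections at the edge cases, yields the statement for $f_2^{i,0}$.
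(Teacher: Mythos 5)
The paper does not actually prove this theorem; it quotes it from Mimura--Toda, so your argument has to stand entirely on its own. Its overall shape (outer induction on the base degree $i$, tensor upgrade of the fiber hypothesis, then a page-by-page descent from $E_\infty^{i,0}=E_{i+1}^{i,0}$ to $E_2^{i,0}$) is reasonable, and the surjectivity half of the descent is fine: surjectivity of $f_r^{i,0}$ does follow from surjectivity of $f_{r+1}^{i,0}$ together with surjectivity of $f_r^{i-r,r-1}$, and the cells that its page-induction visits stay in your region $R$. The gap is in the injectivity half of the ``five-lemma-style'' step. Chasing an element $x\in\bar{E}_r^{i,0}$ with $f(x)=0$: injectivity of $f_{r+1}^{i,0}$ forces $x=\bar{d}_r(\bar{z})$ for some $\bar{z}\in\bar{E}_r^{i-r,r-1}$, and then $f(\bar{z})\in\ker d_r$; to conclude $x=0$ you must replace $\bar{z}$ by an element of $\ker\bar{d}_r$ with the same image, i.e.\ you need the map $\ker\bar{d}_r\to\ker d_r$ to be surjective (onto the image of $f_r^{i-r,r-1}$). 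Knowing that $f_r^{i-r,r-1}$ is an isomorphism (or injection) does \emph{not} give this: nothing prevents $\bar{d}_r$ from being injective while $d_r=0$, in which case $f_{r+1}^{i,0}$ can be injective and $f_r^{i-r,r-1}$ an isomorphism while $f_r^{i,0}$ kills $\operatorname{im}\bar{d}_r$. So the step needs an additional surjectivity input on the kernel term, which your sketch never secures and whose index bookkeeping is exactly what your ``analogous checks'' elide.

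Moreover, securing that input is where the real difficulty of the theorem lives, and it does not stay inside your controlled region $R=\{(k,j): k<i,\ j\le Q\}$ with only the $E_2$ hypotheses. The natural sufficient condition is surjectivity of $f_{r+1}^{i-r,r-1}$ (plus surjectivity at $(i-2r,2r-2)$), and the naive page-induction for that surjectivity requires injectivity of $f_r$ at the target cell $(i,0)$ --- precisely what you are in the middle of proving, so the recursion as organized is circular. In the borderline case $i=Q+1$, $r=i$, the source cell is $(0,Q)$ and the needed surjectivity there cannot come from the fiber hypothesis at all, since in row $Q$ you only assume injectivity; it has to be extracted from the $E_\infty$ hypothesis in total degree $Q<N$ (using that column-$0$ terms are nested subspaces, $E_\infty^{0,Q}=\ker(d_{Q+1})\subseteq E_{Q+1}^{0,Q}$, and each $f_s^{0,Q}$ is a restriction of $f_2^{0,Q}$). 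None of this appears in your proposal: your index checks only track $f_s$ at the source cell and its immediate neighbours, not the kernel-surjectivity statements, so the induction as described does not close. A correct proof has to interleave injectivity and surjectivity assertions across pages and total degrees in a carefully ordered simultaneous induction (this is what the Zeeman/Mimura--Toda argument does), rather than rely on the local four-term chase with only the inputs you list.
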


\indent We recall a result from \cite[Section 11]{Tot} on the degeneration of the Hodge spectral sequence for split reductive groups, under some assumptions. The result in \cite[Section 11]{Tot} was proved for the special orthogonal groups but the proof works more generally.

\begin{proposition} \label{propHodgeSSG}
Let $G$ be a split reductive group over a field $F$ and assume that the Hodge cohomology ring of $BG$ is generated as an $F$-algebra by classes in $\oplus_{i} H^{i+1}(BG, \Omega^{i})$ and $\oplus_{i} H^{i}(BG, \Omega^{i}).$ Then the Hodge spectral sequence \begin{equation} \label{HodgeSSG} E^{i,j}_{1}=H^{j}(BG, \Omega^{i}) \Rightarrow H^{i+j}_{\textup{dR}}(BG/F)\end{equation} for $BG$ degenerates at the $E_{1}$ page.
\end{proposition}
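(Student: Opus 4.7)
The plan is to exploit that the Hodge spectral sequence is a multiplicative spectral sequence of bigraded $F$-algebras in which each differential $d_r$ is a graded derivation. By the Leibniz rule, it suffices to check that $d_r$ vanishes on a set of algebra generators at each page $E_r$. Under the hypothesis, we may choose such generators in bidegrees $(i,i)$ (diagonal) and $(i,i+1)$ (just above the diagonal).

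The key structural input is Totaro's identification $H^i(BG, \Omega^j) \cong H^{i-j}(G, S^j(\mathfrak{g}^*))$, which yields the vanishing $E_1^{p,q} = H^q(BG, \Omega^p) = 0$ for $q < p$. Since $d_r$ sends bidegree $(p,q)$ to $(p+r, q-r+1)$, its target lies in the vanishing region precisely when $q - p < 2r - 1$. For a diagonal generator in bidegree $(i,i)$ this inequality holds for every $r \geq 1$; for a near-diagonal generator in bidegree $(i,i+1)$ it holds for every $r \geq 2$. Hence every algebra generator is automatically a permanent cycle except possibly with respect to $d_1$ acting on the near-diagonal generators.

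The principal obstacle is therefore this last case. For a generator $y \in H^{i+1}(BG, \Omega^i)$, the image $d_1(y)$ lies in $H^{i+1}(BG, \Omega^{i+1}) \cong S^{i+1}(\mathfrak{g}^*)^G$, the invariant-theoretic diagonal part. To show $d_1(y) = 0$, I would attempt a lifting strategy: the split reductive group $G$ and the Hodge spectral sequence are defined integrally, and after tensoring with $\mathbb{Q}$ the linear reductivity of $G_\mathbb{Q}$ forces $H^q(BG_\mathbb{Q}, \Omega^p) = 0$ for $q > p$, so the entire near-diagonal strip vanishes rationally and $d_1 \equiv 0$ there. Since the target $S^{i+1}(\mathfrak{g}^*_\mathbb{Z})^{G_\mathbb{Z}}$ is torsion-free as a $\mathbb{Z}$-module, one would like to leverage this to descend the vanishing of $d_1$ to characteristic $p$ via a base-change / universal-coefficients argument; this descent step is the most delicate point of the proof.

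Once $d_1$ vanishes on every generator, the Leibniz rule extends the vanishing to all of $E_1$, so $E_2 = E_1$. Applying the same bidegree argument to the (unchanged) generators on $E_2, E_3, \ldots$ then gives $d_r = 0$ for every $r \geq 2$ as well, and the spectral sequence degenerates at $E_1$ as desired.
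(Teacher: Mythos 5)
Your reduction of the problem is the same as the paper's: since $H^{j}(BG,\Omega^{i})=0$ for $j<i$, bidegree considerations show that all differentials vanish on diagonal classes and that the only possible nonzero differential on a generator is $d_{1}$ on a class $y\in H^{i+1}(BG,\Omega^{i})$, landing in the diagonal group $H^{i+1}(BG,\Omega^{i+1})$; multiplicativity then finishes the argument. The gap is in how you kill this last differential. Your proposed lifting argument cannot work as stated: over $\mathbb{Q}$ the near-diagonal groups $H^{i+1}(BG_{\mathbb{Q}},\Omega^{i})$ vanish entirely, so the generators $y$ in characteristic $p$ are pure torsion phenomena that do not lift to characteristic zero. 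Concretely, under any base-change/universal-coefficients comparison with a model over $\mathbb{Z}$ (or $\mathbb{Z}_{p}$), such a class $y$ comes from a $\mathrm{Tor}$ (Bockstein-type) contribution of the $p$-torsion in $H^{i+2}(BG_{\mathbb{Z}},\Omega^{i})$, and its $d_{1}$ is governed by the integral differential on that torsion group, whose target is $H^{i+2}(BG_{\mathbb{Z}},\Omega^{i+1})$ --- not the torsion-free ring of integral invariants. Moreover, the mod-$p$ diagonal $S^{i+1}(\mathfrak{g}^{*})^{G}$ is not simply the reduction of the integral invariants (invariants do not commute with base change, and there may be $\mathrm{Tor}$ terms there as well), so torsion-freeness of $S^{i+1}(\mathfrak{g}^{*}_{\mathbb{Z}})^{G_{\mathbb{Z}}}$ plus rational vanishing gives no control over $d_{1}(y)$ in characteristic $p$. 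Indeed, rational degeneration is automatic for reductive $G$ (everything is concentrated on the diagonal over $\mathbb{Q}$), so if this descent were valid it would prove degeneration with no real input, which is exactly the sort of statement that fails in general for Hodge--de Rham spectral sequences in characteristic $p$.

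The paper's proof supplies the missing ingredient differently: by \cite[Lemma 8.2]{Tot} there are natural product-compatible maps $H^{i}(BG,\Omega^{i})\to H^{2i}_{\textup{dR}}(BG/F)$, and restriction to a maximal torus $T$ is injective on the diagonal $\oplus_{i}H^{i}(BG,\Omega^{i})$, while for $BT$ the diagonal maps isomorphically to de Rham cohomology. The resulting commutative square shows that $\oplus_{i}H^{i}(BG,\Omega^{i})$ injects into de Rham cohomology, i.e.\ every diagonal class survives to $E_{\infty}$; hence no differential can hit the diagonal, and in particular $d_{1}(y)=0$ for your near-diagonal generators. If you replace your characteristic-zero descent with this torus-restriction argument, the rest of your proof goes through.
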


\begin{proof}
From \cite[Lemma 8.2]{Tot}, there are natural maps $H^{i}(BG, \Omega^{i}) \to H^{2i}_{\textup{dR}}(BG/F)$ and $H^{i+1}(BG, \Omega^{i}) \to H^{2i+1}_{\textup{dR}}(BG/F)$ for all $i \geq 0$. These maps are compatible with products. Let $T$ denote a maximal torus of $G.$ From the group homomorphism $T \to G$, we have the commuting square

\begin{equation} \label{hodgessdegeneratesG_2diagram1}
\begin{tikzcd}
\oplus_{i} H^{i}(BG, \Omega^{i}) \arrow[r, ""] \arrow[d, ""] & \oplus_{i} H^{2i}_{\textup{dR}}(BG/F)\arrow[d, ""]\\
\oplus_{i} H^{i}(BT, \Omega^{i}) \arrow[r, "\cong"]& H^{2i}_{\textup{dR}}(BT/F).
\end{tikzcd}
\end{equation}
The restriction homomorphism \ref{restriction} induces an injection $$\oplus_{i} H^{i}(BG, \Omega^{i}) \to \oplus_{i} H^{i}(BT, \Omega^{i})$$ \cite[Lemma 8.2]{Tot}. Hence, from diagram \ref{hodgessdegeneratesG_2diagram1}, we get that the natural map $$\oplus_{i} H^{i}(BG, \Omega^{i}) \to \oplus_{i} H^{2i}_{\textup{dR}}(BG/F)$$ is an injection. Hence, any differentials into the diagonal in the spectral sequence \ref{HodgeSSG} must be $0$. Then all differentials on $\oplus_{i} H^{i+1}(BT, \Omega^{i})$ in \ref{HodgeSSG} must be $0$. All differentials on $\oplus_{i} H^{i}(BT, \Omega^{i})$ in the spectral sequence \ref{HodgeSSG} must be $0$ since $H^{i}(BG, \Omega^{j})=0$ for $i<j$ by \cite[Corollary 2.2]{Tot}. This proves that the Hodge spectral sequence for $BG$ degenerates.
\end{proof}

\section{$G_2$}

\indent Let $k$ be a field of characteristic $2$ and let $G$ denote the split form of $G_{2}$ over $k$.
\begin{theorem}  \label{cohofBG2}
 The Hodge cohomology ring of $BG$ is freely generated as a commutative $k$-algebra by generators $y_{4}\in H^{2}(BG,\Omega^{2})$, $y_{6}\in H^{3}(BG,\Omega^{3})$, and $y_{7}\in H^{4}(BG,\Omega^{3})$. The Hodge spectral sequence for $BG$ degenerates at $E_{1}$ and we have $$H^{*}_{\textup{dR}}(BG/k)\cong H_{\textup{H}}^{*}(BG/k)\cong k[y_{4}, y_{6}, y_{7}]$$.
\end{theorem}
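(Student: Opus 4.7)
The plan is to compute $H^{*}_{\textup{H}}(BG/k)$ by combining the spectral sequence of Proposition \ref{Serre SS} (for a carefully chosen parabolic) with the Zeeman comparison of Theorem \ref{Zeeman}, and then to deduce the de Rham statement via Proposition \ref{propHodgeSSG}.

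First I would produce the on-diagonal generators. By Theorem \ref{theoreminvariants}, $O(\mathfrak{g})^{G}\cong O(\mathfrak{t})^{W(G_{2})}$. Writing the simple reflections of $W(G_{2})$ on the basis of simple roots and reducing mod $2$, one checks that the kernel of $W(G_{2})\to GL(\mathfrak{t}\otimes k)$ is the center $\{\pm 1\}\subset W(G_{2})$ (the long element $w_{0}=-1$ acts trivially mod $2$) and the image is all of $GL_{2}(\mathbb{F}_{2})\cong S_{3}$. Hence $O(\mathfrak{t})^{W}\cong k[\mathfrak{t}^{*}]^{GL_{2}(\mathbb{F}_{2})}$ is the Dickson invariant ring $k[d_{2},d_{3}]$ with generators in polynomial degrees $2$ and $3$, yielding classes $y_{4}\in H^{2}(BG,\Omega^{2})$ and $y_{6}\in H^{3}(BG,\Omega^{3})$.

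Next I would apply Proposition \ref{Serre SS} with $P$ a maximal parabolic of $G$ whose Levi $L$ is $GL_{2}$, so the abutment $H^{*}_{\textup{H}}(BL/k)\cong k[c_{1},c_{2}]$ is polynomial. By Proposition \ref{flagvarietyChow} the $0$-th column of this spectral sequence is $CH^{*}(G/P)\otimes k$; here $G/P$ is a smooth $5$-dimensional quadric with Chow ring $\mathbb{Z}[h,\ell]/(h^{3}-2\ell,\ell^{2})$, reducing mod $2$ to $k[h,\ell]/(h^{3},\ell^{2})$. Matching Poincar\'e series against the abutment forces $\ell\in E_{2}^{0,6}$ to support a nontrivial differential; the first available one is $d_{7}\colon E_{7}^{0,6}\to E_{7}^{7,0}$, producing a nonzero class $y_{7}:=d_{7}(\ell)\in H^{7}_{\textup{H}}(BG/k)$. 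A bidegree analysis via $H^{i}(BG,\Omega^{j})\cong H^{i-j}(G,S^{j}(\mathfrak{g}^{*}))$, together with vanishing of the low-degree $G$-cohomology of the other symmetric-power summands, places $y_{7}$ in $H^{4}(BG,\Omega^{3})$.

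The central step is then the Zeeman comparison. I would construct a model spectral sequence $\bar{E}$ with $\bar{E}_{2}^{i,j}=k[y_{4},y_{6},y_{7}]^{i}\otimes H^{j}_{\textup{H}}((G/P)/k)$ and a unique nonzero differential extending $\bar{d}_{7}(\ell)=y_{7}$ by the Leibniz rule; the identity $(1+t^{2}+t^{4})/((1-t^{4})(1-t^{6}))=1/((1-t^{2})(1-t^{4}))$ shows $\bar{E}_{\infty}\cong k[c_{1},c_{2}]$, matching the abutment of the actual spectral sequence $E$. The natural algebra map $k[y_{4},y_{6},y_{7}]\to H^{*}_{\textup{H}}(BG/k)$ sending the formal generators to the classes constructed above, tensored with the identity on $H^{*}_{\textup{H}}((G/P)/k)$, defines a morphism $\bar{E}\to E$ which is an isomorphism on the $0$-th column and (by the compatibility with abutments) on $E_{\infty}$. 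Iterated application of Theorem \ref{Zeeman} then yields $H^{*}_{\textup{H}}(BG/k)\cong k[y_{4},y_{6},y_{7}]$. Since $y_{4},y_{6}$ lie on the Hodge diagonal and $y_{7}$ one row above it, Proposition \ref{propHodgeSSG} gives $E_{1}$-degeneration of the Hodge spectral sequence, so $H^{*}_{\textup{dR}}(BG/k)\cong H^{*}_{\textup{H}}(BG/k)\cong k[y_{4},y_{6},y_{7}]$ as graded rings.

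The main obstacle will be the Zeeman step: verifying that the morphism $\bar{E}\to E$ really is an isomorphism on $E_{\infty}$, equivalently that the induced map on abutments realizes the identity $k[c_{1},c_{2}]\cong k[c_{1},c_{2}]$ after choosing $y_{4},y_{6},y_{7}$ appropriately. Pinning down the precise bidegree home of $y_{7}$ in $H^{4}(BG,\Omega^{3})$ (rather than some other summand of $H^{7}_{\textup{H}}(BG/k)$) also requires care, since the spectral sequence of Proposition \ref{Serre SS} only controls total Hodge degree and the bigrading has to be recovered by separately analyzing the Hodge summands.
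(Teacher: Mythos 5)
Your route is the same as the paper's: the parabolic with Levi $GL(2)$, the identification $G/P\cong Q_{5}$ with $CH^{*}\otimes k\cong k[v,w]/(v^{3},w^{2})$, the invariant ring $k[y_{4},y_{6}]$ (your Dickson-invariant description agrees with the paper's computation of $S^{*}(\mathfrak{t}_{0})^{W}$ in characteristic $2$, since $-1$ acts trivially mod $2$ and $S_{3}\cong GL_{2}(\mathbb{F}_{2})$), the transgression $d_{7}(w)=y_{7}$, a Zeeman comparison against a model spectral sequence, and degeneration via Proposition \ref{propHodgeSSG}. However, the two places you dispatch with ``matching Poincar\'e series'' and ``compatibility with abutments'' are exactly where the substance of the proof lies, and as stated they are not arguments. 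Before any comparison map exists you must know that $v\in E_{2}^{0,2}$ is a permanent cycle and that $w\in E_{2}^{0,6}$ is transgressive with first possible differential $d_{7}$; this requires $H^{2}_{\textup{H}}(BG/k)=H^{3}_{\textup{H}}(BG/k)=H^{5}_{\textup{H}}(BG/k)=0$, which the paper deduces from $H^{1}(BG,\Omega^{1})=0$ (Proposition \ref{invariantsofG_2}) together with the evenness of $H^{*}_{\textup{H}}(BL/k)$, via the low-degree filtration of the abutment. Your Poincar\'e-series remark does not by itself give these vanishings.

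More seriously, the model $\bar{E}$ is not the spectral sequence of any fibration, so there is no abutment map, and equality of Poincar\'e series of $\bar{E}_{\infty}$ and of $k[c_{1},c_{2}]$ does not show that $\bar{E}_{\infty}\to E_{\infty}$ is an isomorphism: a priori some permanent cycle $y_{4}^{a}y_{6}^{b}\otimes v^{i}$ could be a boundary, or $E_{\infty}$ could contain classes not hit by the model. The paper closes precisely this gap by observing that every $d_{7}$-boundary is divisible by $y_{7}$, which lies in $H^{4}(BG,\Omega^{3})$, strictly above the Hodge diagonal (using $H^{i}(BG,\Omega^{j})=0$ for $i<j$), whereas $y_{4}^{a}y_{6}^{b}$ lies on the diagonal; hence the diagonal classes tensored with $v^{i}$ inject into $E_{\infty}^{n-2i,2i}$, and an explicit bijection between monomials computing $\dim_{k}H^{n}_{\textup{H}}(BL/k)$ and monomials in $(y_{4},y_{6})\times\{1,v,v^{2}\}$ forces these injections to be isomorphisms and all remaining $E_{\infty}$ terms to vanish. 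Only then is Theorem \ref{Zeeman} applicable. Note also that this mechanism needs the off-diagonal location of $y_{7}$ (e.g.\ $y_{7}\in H^{4}(BG,\Omega^{3})$), a point you flag but do not establish; it follows because the differentials in the spectral sequence of Proposition \ref{Serre SS} respect the Hodge grading, so the transgression of $w\in H^{3}(G/P,\Omega^{3})$ lands in $H^{4}(BG,\Omega^{3})$. With these steps supplied, your outline coincides with the paper's proof.
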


\indent From the computation \cite[Corollary VII.6.3]{MimTod} of the singular cohomology ring of $B(G_{2})_{\mathbb{C}}$ with $\mathbb{F}_{2}$-coefficients, we then have $H^{*}(B(G_{2})_{\mathbb{C}}, k) \cong H^{*}_{\textrm{dR}}(BG/k)$. 

\begin{proof}
\indent We first choose a suitable parabolic subgroup of $G$. Let $P$ be the parabolic subgroup of $G$ corresponding to inclusion of the long root.
\begin{center}
\begin{tikzpicture}
\draw (2.5,0) circle (5pt) node[anchor=east]{};
\filldraw[black] (0,0) circle (5pt) node[anchor=west] {};
\draw[thick] (.15,.1) -- (2.35,.1);
\draw[thick] (.1,0) -- (2.33,0);
\draw[thick] (.15,-.1) -- (2.35,-.1);
\draw[thick] (1.245, 0) -- +(.2,.2);
\draw[thick] (1.245, 0) -- +(.2,-.2);
\end{tikzpicture}
\end{center}

\indent From Proposition \ref{flagvarietyChow}, $CH^{*}(G/P)$ is independent of the field $k$ and the characteristic of $k$. As discussed in \cite[\S 23.3]{FulHar}, if we consider $(G_{2})_{\mathbb{C}}$ over $\mathbb{C}$ along with the corresponding parabolic subgroup $P_{\mathbb{C}}$, $(G_{2})_{\mathbb{C}}/P_{\mathbb{C}}$ is isomorphic to a smooth quadric $Q_{5}$ in $\mathbb{P}^{6}$. Hence, by \cite[Chapter XIII]{EKM}, $H_{\textup{H}}^{*}((G/P)/k)$ is isomorphic to $$CH^{*}(Q_{5})\otimes_{\mathbb{Z}}{k}\cong k[v,w]/(v^6,w^2,v^3-2w) \cong k[v,w]/(v^3,w^2)$$ where $|v|=2$ and $|w|=6$ in $H_{\textup{H}}^{*}((G/P)/k)$ .

\indent We next show that the Levi quotient $L=P/R_{u}(P)$ of $P$ is isomorphic to $GL(2)_{k}$. This can be seen by constructing an isomorphism from the root datum of $GL(2)_{k}$ to the root datum of the Levi quotient. Let $(X_{1},R_{1},X_{1}^{\vee},R_{1}^{\vee})$ be the usual root datum of $GL(2)_{k}$ where $X_{1}=\mathbb{Z}\chi_{1}+\mathbb{Z}\chi_{2}$, $R_{1}=\mathbb{Z}(\chi_{1}-\chi_{2}),$ and we take our torus to be the set of diagonal matrices in $GL(2)_{k}$. We take $(X_{2},R_{2},X_{2}^{\vee},R_{2}^{\vee})$ to be the root datum of $G$ as described in \cite[Plate IX]{Bou}. Here, $X_{2}=\{(a,b,c)\in \mathbb{Z}^{3}\mid a+b+c=0\}$. The long root $\alpha$ for $G$ is then $(-2,1,1)$ and the root datum of $P/R_{u}(P)$ is $(X_{2},{\pm \alpha},X_{2}^{\vee},{\pm \frac{1}{3}\alpha})$. An isomorphism from the root datum of $GL(2)_{k}$ to the root datum of $G$ can then be obtained from the isomorphism $$X_{1} \to X_{2}$$ 
 $$\chi_{1}\longmapsto (-1,1,0), \chi_{2}\longmapsto (1,0,-1).$$ Thus, $L \cong GL(2)_{k}$.

\indent We now analyze the spectral sequence \begin{equation} \label{SerreSSG_2} E_{2}^{i,j}=H_{\textrm{H}}^{i}(BG/k)\otimes H_{\textrm{H}}^{j}((G/P)/k)\Rightarrow H_{\textrm{H}}^{i+j}(BL/k) \end{equation} from Proposition \ref{Serre SS}. From \cite[Theorem 4.1]{Tot}, $$H_{\textrm{H}}^{*}(BL/k)\cong S^{*}(\frak{gl_{2}})^{GL(2)_{k}}\cong S^{*}(\frak{t})^{S_{2}}\cong k[x_{1},x_{2}]$$ where $x_{1} \in H^{1}(BL,\Omega^{1})$ and $x_{2}\in H^{2}(BL,\Omega^{2}).$ Here, $\frak{t}$ is the space of all diagonal matrices in $\frak{gl_{2}}$ and $S_{2}$ acts on $\frak{t}$ by permuting the diagonal entries.

\indent In order to compute $H_{\textrm{H}}^{*}(BG/k)$ from the spectral sequence above, we must first compute the ring of invariants of $S^{*}(\frak{g}_{2})^{G}$. From Theorem \ref{theoreminvariants}, $S^{*}(\frak{g}_{2})^{G}\cong S^{*}(\frak{t_{0}})^{W}$ where $\frak{t_{0}}$ is the Lie algebra of a maximal torus $T$ in $G$ and $W$ is the corresponding Weyl group of $G$. By \cite[Corollary 2.2]{Tot}, $$H^{i}(BG,\Omega^{i})\cong S^{i}(\frak{t_{0}})^{W}$$ for $i\geq0$.
\begin{proposition} \label{invariantsofG_2} The ring of invariants $S^{*}(\frak{t_{0}})^{W}$ is isomorphic to $k[y_{4}, y_{6}]$ where $|y_{4}|=2$ and $|y_{6}|=3$ in $S^{*}(\frak{t_{0}})^{W}$ .
\end{proposition}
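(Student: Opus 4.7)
The plan is to exploit the explicit coordinate description of the root datum in the proof (via Bourbaki Plate IX) and to reduce everything to a standard $S_3$ invariant theory computation that is well behaved in characteristic $2$.

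First, I would identify the Weyl group $W$ with $S_3 \times \{\pm 1\}$, acting on $\mathfrak{t}_0 = \{(a,b,c) \in k^3 : a+b+c=0\}$ as follows: the $S_3$ factor permutes the three coordinates (realized by the reflections in the short roots $e_i - e_j$), and the $\mathbb{Z}/2$ factor is the longest Weyl element, which for $G_2$ acts as $-\mathrm{id}$ on $\mathfrak{t}_0$. The direct-product splitting is just the standard isomorphism $D_6 \cong S_3 \times \mathbb{Z}/2$ coming from the fact that $-\mathrm{id}$ is central and the quotient is $S_3$.

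Second, I would observe the key simplification in characteristic $2$: the scalar action of $-\mathrm{id}$ coincides with the identity, so the $W$-action on $S^*(\mathfrak{t}_0)$ factors through $W/\{\pm 1\} \cong S_3$. The computation therefore reduces to determining $S^*(\mathfrak{t}_0)^{S_3}$.

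Third, I would compute this $S_3$-invariant ring. Eliminating $c = -a-b$ presents $\mathfrak{t}_0^{*}$ as the standard $2$-dimensional reflection representation of $S_3$ on $k[a,b]$. Since $|S_3| = 6$ is coprime to $\mathrm{char}(k) = 2$, the Chevalley-Shephard-Todd theorem applies and yields a polynomial invariant ring whose two generators have the usual degrees $2$ and $3$. Concretely I can take $y_4 = e_2 = ab+bc+ca$ and $y_6 = e_3 = abc$ (evaluated after the substitution $c = -a-b$); these are manifestly invariant under both coordinate permutations and, trivially in characteristic $2$, under $-\mathrm{id}$, and their algebraic independence follows either from a Jacobian check or from the Hilbert-series count $(1-t^2)^{-1}(1-t^3)^{-1}$ that matches the Molien series of $S_3$ on $k^2$.

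The only nontrivial step is the identification of $W$ together with the observation that $-\mathrm{id}$ becomes invisible in characteristic $2$; once that is in place, the rest is a standard invariant-theoretic calculation and in particular does not require any modular invariant theory.
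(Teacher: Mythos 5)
Your reduction is the same as the paper's: identify $W(G_{2}) \cong S_{3} \times \{\pm 1\}$ from Bourbaki's Plate IX, note that $-\mathrm{id}$ acts trivially in characteristic $2$, and so reduce to computing the invariants of $S_{3}$ permuting $t_{1},t_{2},t_{3}$ in $k[t_{1},t_{2},t_{3}]/(t_{1}+t_{2}+t_{3})$. Those steps are fine. The gap is in your final step: $|S_{3}|=6$ is \emph{not} coprime to $\mathrm{char}(k)=2$, so this is genuinely a modular situation, and neither the non-modular Chevalley--Shephard--Todd theorem nor Molien's formula can be invoked; your closing remark that ``no modular invariant theory is required'' is exactly backwards. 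After eliminating the relation you have $S_{3}$ permuting the three nonzero linear forms $a$, $b$, $a+b$ of $k[a,b]$, i.e.\ the action of $GL_{2}(\mathbb{F}_{2})\cong S_{3}$ on its natural two-dimensional module base-changed to $k$ --- a faithful representation in characteristic dividing the group order. The Jacobian/Hilbert-series checks you mention only establish algebraic independence of $e_{2},e_{3}$, not that they generate the whole invariant ring, which is the actual content of the proposition.

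The statement is nevertheless true, and the argument can be repaired with a characteristic-free substitute for the step you outsourced to Chevalley--Shephard--Todd. Either quote Dickson's theorem: the invariant ring of $GL_{2}(\mathbb{F}_{2})$ on $k[a,b]$ is the Dickson algebra, polynomial on generators of degrees $2^{2}-2=2$ and $2^{2}-1=3$, which are exactly $e_{2}=a^{2}+ab+b^{2}$ and $e_{3}=ab(a+b)$. Or argue directly: each of $a$, $b$, $a+b$ is a root of $T^{3}+e_{2}T+e_{3}$, so $k[a,b]$ is integral over $A:=k[e_{2},e_{3}]$; since $A$ is a polynomial ring (hence integrally closed) and $\deg(e_{2})\deg(e_{3})=6=|S_{3}|$, comparing degrees of fraction-field extensions gives $k(a,b)^{S_{3}}=\mathrm{Frac}(A)$, and then $k[a,b]^{S_{3}}$, being integral over $A$ and contained in $\mathrm{Frac}(A)$, equals $A$. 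This holds in every characteristic and yields the paper's conclusion $S^{*}(\mathfrak{t}_{0})^{W}=k[y_{4},y_{6}]$.
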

\begin{proof} Following the notation in \cite[Plate IX]{Bou}, $W\cong Z_{2}\times S_{3}$ acts on the root lattice $X_{2}=\{(a,b,c)\in \mathbb{Z}^{3}\mid a+b+c=0\}$ by multiplication by $-1$ and by permuting the coordinates. Hence, since we are working in characteristic $2$, $W$ acts on $S^{*}(\frak{t_{0}})=k[t_{1},t_{2},t_{3}]/(t_{1}+t_{2}+t_{3})$ by permuting $t_{1}$, $t_{2}$, and $t_{3}$. We then have 
$$S^{*}(\frak{t_{0}})^{W}\cong k[t_{1}t_{2}+t_{1}t_{3}+t_{2}t_{3},t_{1}t_{2}t_{3}]=k[y_{4},y_{6}].$$
\end{proof}
\indent We can now carry out the computation of $H_{\textrm{H}}^{*}(BG/k)$. First, we show that the class $v \in E^{0,2}_{2}$ is transgressive with $d_{3}(v)=0 \in E^{3,0}_{3}$. Consider the filtration on $H_{\textrm{H}}^{2}(BL/k)=k \cdot v$ given by \ref{SerreSSG_2}:

$$ H_{\textrm{H}}^{2}(BL/k) \overset{E^{0,2}_{\infty}} \hookleftarrow E^{2,0}_{\infty}.$$ Here, $E^{1,1}_{2}=0$ and $E^{2,0}_{\infty}=E^{2,0}_{2}=H_{\textrm{H}}^{2}(BG/k)=H^{1}(BG,\Omega^{1})$ since $H_{\textrm{H}}^{*}((G/P)/k)=\oplus _{i} H^{i}(G/P, \Omega^{i})$ is concentrated in even degrees. Hence, $$E^{2,0}_{\infty}=H_{\textrm{H}}^{2}(BG/k)=H^{1}(BG, \Omega^{1})=0$$ from Proposition \ref{invariantsofG_2}. It follows that $E^{0,2}_{\infty} \cong E^{0,2}_{2}=k \cdot v$ which implies that $d_{3}(v)=0$. As \ref{SerreSSG_2} is a spectral sequence of algebras, it follows that $d_{i}(v^2)=0$ for all $i \geq 2$. Using that $H_{\textrm{H}}^{*}(BL/k)$ is concentrated in even degrees, we then get that $H_{\textrm{H}}^{3}(BG/k)=E^{3,0}_{2}=E^{3,0}_{\infty}=0$ and $H_{\textrm{H}}^{5}(BG/k)=E^{5,0}_{2}=E^{5,0}_{\infty}=0.$

\indent Next, we show that $w \in H_{\textrm{H}}^{6}((G/P)/k)=E^{0,6}_{2}$ is transgressive with $0 \neq d_{7}(w) \in E^{7,0}_{7}. $ Note that $\textup{dim}_{k} H_{\textrm{H}}^{6}(BL/k)=2.$ As $v$ is transgressive with $d_{3}(v)=0$, we observe that $E^{4,2}_{\infty} \cong E^{4,2}_{2} \cong k \cdot y_{4} \otimes_{k} k \cdot v \cong k$ and $E^{6,0}_{\infty} \cong E^{6,0}_{2} \cong k \cdot y_{6} \cong k$. Hence, $\textup{dim}_{k} H_{\textrm{H}}^{6}(BL/k)=2=\textup{dim}_{k} E^{4,2}_{\infty} + \textup{dim}_{k} E^{6,0}_{\infty}.$ From the filtration on $H_{\textrm{H}}^{6}(BL/k)$ given by the spectral sequence \ref{SerreSSG_2}, it follows that $E^{0,6}_{\infty}=0.$ As $H_{\textrm{H}}^{3}(BG/k)=E^{3,0}_{2}=E^{3,0}_{\infty}=0$ and $H_{\textrm{H}}^{5}(BG/k)=E^{5,0}_{2}=E^{5,0}_{\infty}=0,$ we then get that $0 \neq d_{7}(w) \in E^{7,0}_{7}$ and $d_{7}(w)$ lifts to a non-zero element $y_{7} \in H^{4}(BG, \Omega^{3}) \subseteq H_{\textrm{H}}^{7}(BG/k).$

\[
\begin{tikzcd}
k \cdot w \arrow[ddddddrrrrrrr, ""] \\
0 & 0 & 0& 0 &0 &0& 0& 0\\
k \cdot v^{2} \arrow[ddddrrrrr, ""]   & 0& 0& 0& k \cdot v^{2}y_{4} & 0& k \cdot v^{2}y_{6} & k \cdot v^{2}y_{7}\\
0&0&0&0&0&0&0&0 \\
k \cdot v  \arrow[ddrrr,  ""] & 0& 0& 0& k \cdot vy_{4} & 0& k \cdot vy_{6} & k \cdot vy_{7}\\
0 & 0 & 0& 0 &0 &0& 0& 0\\
k & 0 & 0  & 0 & k \cdot y_{4}& 0& k \cdot y_{6}& k \cdot y_{7}
\end{tikzcd}
\]

\indent Now, we can determine the $E_{\infty}$ terms in \ref{SerreSSG_2}. For $n$ odd, $E^{i,n-i}_{\infty}=0$ since $H_{\textrm{H}}^{*}(BL/k)$ is concentrated in even degrees. Let $n \in \mathbb{N}$ be even. The $k$-dimension of $H_{\textrm{H}}^{n}(BL/k)$ is equal to the cardinality of the set $$S_{n}=\{(a,b) \in \mathbb{Z}_{\geq 0} \times \mathbb{Z}_{\geq 0}: 2a+4b=n\}.$$ For $i=0,1,2$, set $V_{i,n} \coloneqq H^{(n-2i)/2}(BG, \Omega^{(n-2i)/2}).$ For $i=0,1,2$, $\textup{dim}_{k} V_{i,n}$ is equal to the cardinality of the set $S_{i,n}=\{(a,b) \in \mathbb{Z}_{\geq 0} \times \mathbb{Z}_{\geq 0}: 4a+6b=n-2i\}.$ As $v$ is transgressive with $d_{3}(v)=0$, $E^{n-2i,2i}_{2} \cong E^{n-2i,2i}_{7}$ for $i=0,1,2.$ As $y_{7} \in H^{4}(BG, \Omega^{3})$ and $H^{i}(BG, \Omega^{j})=0$ for $i<j$,  $$y_{7}\cdot x \notin \oplus_{j} H^{j}(BG, \Omega^{j})$$ for all $x \in H_{\textrm{H}}^{*}(BG/k)$. Hence, $$H^{(n-2i)/2}(BG, \Omega^{(n-2i)/2}) \otimes_{k} k  \cdot v^{i} \subseteq E^{n-2i,2i}_{2} \cong E^{n-2i,2i}_{7}$$ injects into $E^{n-2i,2i}_{\infty}$ for $i=0,1,2.$

\indent Define a bijection $f_{n}:S_{n} \to S_{0,n} \cup S_{1,n} \cup S_{2,n}$ by 
\[f_{n}(a,b)= \begin{cases} 
      (b, a/3)\in S_{0,n} \, \, \textup{if} \, \, a \equiv 0 \mod{3}, \\          
      
      (b, (a-1)/3)\in S_{1,n} \, \, \textup{if} \, \, a \equiv 1 \mod{3}, \\          
      
      (b, (a-2)/3)\in S_{2.n} \, \, \textup{if} \, \, a \equiv 2 \mod{3}.  \\ 
   \end{cases}
\] Then $$\textup{dim}_{k} H_{\textrm{H}}^{n}(BL/k)=|S_{n}|=|S_{0,n}|+|S_{1,n}|+|S_{2,n}|$$ \\
$$=\textup{dim}_{k} H^{n/2}(BG, \Omega^{n/2}) + \textup{dim}_{k} H^{(n-2)/2}(BG, \Omega^{(n-2)/2}) + \textup{dim}_{k} H^{(n-4)/2}(BG, \Omega^{(n-4)/2})$$
\\ $$\leq \textup{dim}_{k} E^{n,0}_{\infty} + \textup{dim}_{k}E^{n-2,2}_{\infty} + \textup{dim}_{k}E^{n-4,4}_{\infty}$$ where the inequality follows from the fact proved above that $H^{(n-2i)/2}(BG, \Omega^{(n-2i)/2})$ injects into $E^{n-2i,2i}_{\infty}$ for $i=0,1,2.$ From the filtration on $H_{\textrm{H}}^{n}(BL/k)$ defined by the spectral sequence \ref{SerreSSG_2}, it follows that $H^{(n-2i)/2}(BG, \Omega^{(n-2i)/2}) \cong E^{n-2i,2i}_{\infty}$ for $i=0,1,2$ and $E^{n-2i,2i}_{\infty}=0$ for $i \geq 3.$

\indent We can now finish the computation of the Hodge cohomology of $BG$ by using Zeeman's comparison theorem. Let $F_{*}$ denote the cohomological spectral sequence of $k$-vector spaces with $E_{2}$ page concentrated on the $0$th column given by 

\[
F^{0,i}_{2}= \begin{cases}
k \, \, \textup{if} \, \, i=0, \\

k\cdot v \, \, \textup{if} \, \, i=2, \\

k\cdot v^{2} \, \, \textup{if} \, \, i=4, \\

0 \, \, \textup{if} \, \, i \neq 0,\,  2, \,  4. \\ \end{cases}
\] As $v \in E^{0,2}_{2}$ in the spectral sequence \ref{SerreSSG_2} is transgressive with $d_{r}(v)=0$ for all $r \geq 2$, there exists a map of of spectral sequences $F_{*} \to E_{*}$ that takes $v \in F^{0,2}_{2}$ to $v \in E^{0,2}_{2}$ and $v^{2} \in F^{0,4}_{2}$ to $v^{2} \in E^{0,4}_{2}$.

\begin{definition} For variables $x_{1}, \ldots, x_{n}$ let $\Delta(x_{1}, \ldots, x_{n})$ denote the $k$-vector space with basis given by the products $x_{i_{1}} \cdots x_{i_{r}}$ for $1 \leq i_{1} < i_{2}< \cdots < i_{r} \leq n.$
\end{definition}

\indent Fixing a variable $y$, let $H_{*}$ denote the cohomological spectral sequence with $E_{2}$ page given by $H_{2}=\Delta(w) \otimes k[y]$ where $w$ is of bidegree $(0,6)$, $y$ is of bidegree $(7,0)$, and $w$ is transgressive with $d_{7}(wy^{i})=y^{i+1}$ for all $i \geq 0$. As $w \in E^{0,6}_{2}$ is transgressive with $d_{7}(w)=y_{7} \in E^{7,0}_{2}$, there exists a map of spectral sequence $H_{*} \to E_{*}$ such that $w \in H^{0,6}_{2}$ maps to $w \in E^{0,6}_{2}$ and $y \in H^{7,0}_{2}$ maps to $y_{7} \in E^{7,0}_{2}.$ Elements of the ring of $G$-invariants $k[y_{4}, y_{6}]$ are permanent cycles in the spectral sequence \ref{SerreSSG_2} since they are concentrated on the $0$th row. Thus, by tensoring the previous maps of spectral sequences, we get a map $$\alpha: I_{*} \coloneqq F_{*} \otimes H_{*} \otimes k[y_{4}, y_{6}] \to E_{*}$$ of spectral sequences.

\indent 
As shown above, the map $\alpha$ induces an isomorphism $I_{\infty} \cong F_{2} \otimes k[y_{4}, y_{6}] \to E_{\infty}$ on $E_{\infty}$ pages. The $0$th columns of the $E_{2}$ pages of the spectral sequences $I_{*}$ and $E_{*}$ are both isomorphic to $k[v,w]/(v^{3}, w^{2})$ and $\alpha$ induces an isomorphism on the $0$th columns of the $E_{2}$ pages. Thus, by the Zeeman comparison theorem \ref{Zeeman}, $\alpha$ induces an isomorphism on the $0$th rows of the $E_{2}$ pages. Hence, $$ H_{\textup{H}}^{*}(BG/k)\cong k[y_{4}, y_{6}, y_{7}].$$ From Proposition \ref{propHodgeSSG}, the Hodge spectral sequence for $BG$ degenerates.

\end{proof}
\begin{corollary} \label{corollaryformcoh}
Let $G$ be a $k$-form of $G_{2}$. Then $$H_{\textup{H}}^{*}(BG/k)\cong k[x_{4}, x_{6}, x_{7}]$$ where $|x_{i}|=i$ for $i=4,6,7.$
\end{corollary}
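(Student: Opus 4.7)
The plan is to deduce the general case from Theorem~\ref{cohofBG2} by étale descent. Let $G$ be a $k$-form of $G_2$. Since every form of $G_2$ trivializes over a separable closure (its automorphism group scheme is $G_2$ itself, which is split over $k^{\textup{sep}}$), there is a finite separable extension $L/k$ such that $G_L$ is isomorphic to the split form of $G_2$ over $L$. Because $\operatorname{char}(L) = 2$, Theorem~\ref{cohofBG2} applies to $G_L$ and gives $H^*_{\textup{H}}(BG_L/L) \cong L[y_4, y_6, y_7]$ with $|y_i| = i$.

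Next I would invoke flat base change for the étale cohomology of the quasi-coherent sheaves $\Omega^j$ on $BG$ along the faithfully flat morphism $\operatorname{Spec}(L) \to \operatorname{Spec}(k)$. This yields natural isomorphisms $H^i(BG, \Omega^j) \otimes_k L \cong H^i(BG_L, \Omega^j)$ for all $i, j$, and in particular identifies the Hilbert series of $H_{\textup{H}}^*(BG/k)$ with $1/((1-t^4)(1-t^6)(1-t^7))$. Thus $H_{\textup{H}}^n(BG/k)$ is one-dimensional for $n = 4, 6, 7$.

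Now pick nonzero classes $x_i \in H_{\textup{H}}^i(BG/k)$ for $i = 4, 6, 7$. Since $k$ has characteristic $2$, graded commutativity coincides with ordinary commutativity, so there is a well-defined graded $k$-algebra homomorphism $\phi \colon k[X_4, X_6, X_7] \to H_{\textup{H}}^*(BG/k)$ sending $X_i \mapsto x_i$. After base change to $L$, each $x_i \otimes 1$ is nonzero (by faithful flatness) and lies in the one-dimensional $L$-span of $y_i$ inside $L[y_4, y_6, y_7]$, hence equals $\lambda_i y_i$ for some $\lambda_i \in L^\times$. Therefore $\phi_L$ is an isomorphism of polynomial rings, and faithful flatness of $L/k$ forces $\phi$ itself to be an isomorphism.

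The main point requiring care is flat base change for the étale cohomology of the quasi-coherent sheaves $\Omega^j$ on the stack $BG$. This reduces to standard flat base change for Zariski cohomology of quasi-coherent sheaves applied to a simplicial smooth atlas of $BG$ (for example, the bar construction on $G$); once this identification is in place, the remainder of the argument is essentially formal Hilbert-series bookkeeping combined with faithfully flat descent of isomorphisms.
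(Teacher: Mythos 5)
Your proposal is correct and follows essentially the same route as the paper: trivialize the form over a separable extension, use that Hodge cohomology commutes with extension of the base field, and descend the polynomial-ring structure back to $k$. The only difference is cosmetic — the paper passes to the separable closure $k_{s}$ and leaves the final descent step implicit ("it follows that"), whereas you use a finite separable splitting field and spell out the Hilbert-series and faithfully flat descent argument that justifies that step.
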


\begin{proof}

\indent Letting $k_{s}$ denote the separable closure of $k$, we have $BG \times _{k} \textup{Spec}(k_{s}) \cong BG_{2} \times _{k} \textup{Spec}(k_{s}).$ From Theorem \ref{cohofBG2}, $H_{\textup{H}}^{*}((BG_{2} \times _{k} \textup{Spec}(k_{s}))/k_{s})\cong k_{s}[x^{'}_{4}, x^{'}_{6}, x^{'}_{7}]$ for some $x^{'}_{4}, x^{'}_{6}, x^{'}_{7} \in H_{\textup{H}}^{*}(BG_{2} \times _{k} \textup{Spec}(k_{s})/k_{s})$ with $|x^{'}_{i}|=i$ for all $i.$ As Hodge cohomology commutes with extensions of the base field, $$H_{\textup{H}}^{*}((BG \times _{k} \textup{Spec}(k_{s}))/k_{s}) \cong  H_{\textup{H}}^{*}(BG/k) \otimes_{k} k_{s} .$$ It follows that $H_{\textup{H}}^{*}(BG/k)\cong k[x_{4}, x_{6}, x_{7}]$ for some $x_{4}, x_{6}, x_{7} \in H_{\textup{H}}^{*}(BG/k).$
\end{proof}

\section{Spin groups}

Let $k$ be a field of characteristic $2$ and let $G$ denote the split group $\textup{Spin}(n)$ over $k$ for $n \geq 7.$

\indent Let $P_{0} \subset SO(n)_{k}$ denote a parabolic subgroup that stabilizes a maximal isotropic subspace. Let $P \subset G$ denote the inverse image of $P_{0}$ under the double cover map $G \to SO(n)_{k}.$ The Hodge cohomology of $G/P$ is given by Proposition \ref{flagvarietyChow} and \cite[Theorem III.6.11]{MimTod}.

\begin{proposition} \label{cohflagvar}
There is an isomorphism

$$H^{*}_{\textup{H}}((G/P)/k) \cong k[e_{1}, \ldots, e_{s}]/(e_{i}^{2}=e_{2i}),$$

where $s=\lfloor (n-1)/2 \rfloor,$ $e_{m}=0$ for $m>s,$ and $|e_{i}|=2i$ for all $i.$
\end{proposition}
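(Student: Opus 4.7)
The plan is to combine Proposition~\ref{flagvarietyChow} with the Chevalley--Demazure description of Chow rings of flag varieties and with the topological computation of Mimura--Toda. By Proposition~\ref{flagvarietyChow} the cycle class map gives a graded isomorphism
\[
CH^*(G/P) \otimes_{\mathbb{Z}} k \;\xrightarrow{\sim}\; H^*_{\textup{H}}((G/P)/k),
\]
where a codimension-$i$ cycle class contributes to $H^i((G/P)/k, \Omega^i)$, i.e.\ to total Hodge degree $2i$. The Chevalley--Demazure result recalled just after Proposition~\ref{flagvarietyChow} then identifies $CH^*(G/P)$ with the integral singular cohomology $H^*((G/P)_{\mathbb{C}}, \mathbb{Z})$.

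Next I would identify the variety $G/P$ concretely. Because $\ker(G \to SO(n)_k) = \mu_{2}$ is central and is contained in $P$ (as $P$ is by definition the preimage of $P_0$ under the double cover), the induced map $G/P \to SO(n)_k/P_0$ is an isomorphism of schemes. After base change to $\mathbb{C}$ this becomes the classical orthogonal Grassmannian parametrizing maximal isotropic subspaces of a nondegenerate quadratic form of dimension $n$. The integral singular cohomology of this homogeneous space is exactly the content of \cite[Theorem III.6.11]{MimTod}, presented as a graded ring on generators $e_{1},\ldots,e_{s}$ of topological degree $2i$, with $s = \lfloor (n-1)/2 \rfloor$ and the convention $e_{m}=0$ for $m>s$ (which reflects the dimension of the Grassmannian).

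The final step is to reduce Mimura--Toda's integral presentation modulo $2$ and match it to the stated form $k[e_{1},\ldots,e_{s}]/(e_{i}^{2}=e_{2i})$. This is essentially bookkeeping: the grading lines up because the Chow grading $i$ corresponds to Hodge total degree $2i$, which coincides with the topological degree of $e_{i}$ in the Mimura--Toda presentation, and the integral quadratic relations reduce in characteristic $2$ to the displayed relations $e_{i}^{2}=e_{2i}$. I do not foresee any essential mathematical obstacle beyond verifying this translation of grading and relations, since all the structural input (the comparison of Chow and Hodge groups, the rigidity of $CH^*$ under base change, and the topological cohomology of the spinor variety) is supplied by the cited results.
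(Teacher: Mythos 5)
Your proposal is correct and follows the same route the paper takes: the paper's (one-line) justification is precisely Proposition \ref{flagvarietyChow} combined with the Chevalley--Demazure comparison and the Mimura--Toda computation for the maximal orthogonal Grassmannian $G/P \cong SO(n)_{k}/P_{0}$, with the integral relations reducing mod $2$ to $e_{i}^{2}=e_{2i}$. Your write-up just makes explicit the steps (central $\mu_{2}\subseteq P$, base change to $\mathbb{C}$, grading bookkeeping) that the paper leaves implicit.
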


\indent The Levi quotient of $P_{0}$ is isomorphic to $GL(r)_{k}$ where $r=\lfloor n/2 \rfloor$. Hence, the Levi quotient $L$ of $P$ is a double cover of $GL(r)_{k}.$

\begin{proposition} \label{propmetalinear}
The torsion index of $L$ is equal to $1.$
\end{proposition}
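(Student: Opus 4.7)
The plan is to reduce the computation of $t(L)$ to the classical fact $t(GL(r)_k) = 1$, exploiting that $L$ and $GL(r)_k$ share the same flag variety while $L$ has an enlarged character lattice. Let $\pi\colon L \to GL(r)_k$ denote the double cover, with central kernel $\mu_2$. Choose a maximal torus $T_0 \subset GL(r)_k$ inside a Borel $B_0$, and set $T_L := \pi^{-1}(T_0)$ and $B_L := \pi^{-1}(B_0)$. Since $\pi$ is a central isogeny and $\mu_2$ is central in $L$ (hence contained in every maximal torus), $T_L$ is a maximal torus and $B_L$ is a Borel of $L$ containing it. Because $\ker(\pi) = \mu_2 \subset T_L \subset B_L$, the map $\pi$ descends to a canonical isomorphism of flag varieties $L/B_L \xrightarrow{\sim} GL(r)_k/B_0$.

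Next, I would compare the characteristic maps. Pullback of characters along $\pi|_{T_L}\colon T_L \twoheadrightarrow T_0$ gives an injection $X^*(T_0) \hookrightarrow X^*(T_L)$ of character lattices (of index $2$), hence an inclusion $CH^*(BT_0) \hookrightarrow CH^*(BT_L)$ of polynomial Chow rings. Under the flag variety identification, the characteristic maps fit into the commutative square
\[
\begin{tikzcd}
CH^*(BT_0) \arrow[r] \arrow[d, hook] & CH^*(GL(r)_k/B_0) \arrow[d, equal] \\
CH^*(BT_L) \arrow[r] & CH^*(L/B_L),
\end{tikzcd}
\]
so the image of the bottom map contains the image of the top. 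The classical Borel--Chevalley description $CH^*(GL(r)_k/B_0) \cong CH^*(BT_0)/(e_1, \ldots, e_r)$, with $e_i$ the elementary symmetric polynomials in the standard generators of $X^*(T_0)$, shows that the top map is surjective, i.e., $t(GL(r)_k) = 1$; hence the image of the bottom map contains the class of a point, giving $t(L) = 1$.

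The main obstacle is essentially organizational: I must carefully verify that $\pi^{-1}(T_0)$ is a maximal torus of $L$ (which uses the centrality of $\mu_2$), that the induced map of flag varieties is an isomorphism, and that the characteristic maps are compatible under the inclusion $X^*(T_0) \hookrightarrow X^*(T_L)$. None of these points are technically deep, but skipping any of them would break the reduction. Once the diagram commutes, the result is immediate from the known torsion index of $GL(r)_k$.
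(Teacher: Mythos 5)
Your proof is correct, but it takes a genuinely different route from the paper's. The paper passes to the corresponding compact connected Lie group $M$, identifies $M$ as $(S^{1}\times SU(r))/(\mathbb{Z}/2)$ with derived subgroup $[M,M]\cong SU(r)$, and then invokes \cite{Tot1}, Lemma 2.1, together with $t(SU(r))=1$. You instead stay with the split group over $k$ and exploit the central isogeny $\pi\colon L\to GL(r)_{k}$ in the opposite direction: since $\ker\pi=\mu_{2}$ is central, the flag varieties of $L$ and $GL(r)_{k}$ are identified while the character lattice only grows, so the image of the characteristic map for $L$ contains that for $GL(r)_{k}$, which is all of $CH^{*}(GL(r)_{k}/B_{0})$ by the Borel presentation; hence $t(L)$ divides $t(GL(r)_{k})=1$. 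In effect you use the general principle that a central cover has torsion index dividing that of its quotient, whereas the paper uses the complementary reduction to the derived subgroup. What each buys: the paper's argument is a one-line citation once $M$ and $[M,M]$ are identified, but it routes through the compact form; yours is self-contained and works directly with the definition over $k$, at the cost of the routine verifications you flag, which in characteristic $2$ should be done scheme-theoretically since $\mu_{2}$ is infinitesimal and $\pi$ purely inseparable (they do go through: $\mu_{2}$ is central, hence contained in every maximal torus, so the scheme-theoretic preimages of $T_{0}$ and $B_{0}$ are a maximal torus and a Borel, and $L/B_{L}\to GL(r)_{k}/B_{0}$ is an isomorphism). If one prefers the definition of the torsion index via the compact form, the independence of $CH^{*}(G/B)$ and of the characteristic map from the base field (Chevalley, Demazure) makes your algebraic computation equivalent.
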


\begin{proof}
We show that the torsion index of the corresponding compact connected Lie group $M$ is equal to $1.$ As $M$ is a double cover of $U(r)$, $M$ is isomorphic to $(S^{1} \times SU(r))/2\mathbb{Z}$ where $k \in \mathbb{Z}$ acts on $S^{1} \times SU(r)$ by $$(z,A)\mapsto (ze^{2\pi ik/r},e^{-2\pi ik/r}A).$$ Hence, the derived subgroup $ \left[M,M\right]$ of $M$ is isomorphic to $SU(r)$. As $SU(r)$ has torsion index $1,$ $M$ has torsion index $1$ by \cite[Lemma 2.1]{Tot1}. Thus, $L$ has torsion index equal to $1.$
\end{proof}

\begin{corollary} \label{cohofBL}
There is an isomorphism $$H^{*}_{\textup{H}}(BL/k) \cong O(\frak{l})^{L} \cong k[A, c_{2}, \ldots, c_{r}]$$ where $|c_{i}|=2i$ in $H^{*}_{\textup{H}}(BL/k)$ for all $i$ and $|A|=2.$
\end{corollary}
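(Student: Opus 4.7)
The plan is to invoke Totaro's identification of Hodge cohomology with the ring of polynomial invariants (valid since $L$ has torsion index $1$ by Proposition \ref{propmetalinear}) and then to compute $O(\mathfrak{l})^{L}$ by restriction to a maximal torus. First, by Proposition \ref{propmetalinear} together with \cite[Theorem 4.1]{Tot} (whose proof only requires the torsion index of the group to be invertible in $k$), the Hodge spectral sequence for $BL$ degenerates at $E_{1}$, the Hodge cohomology is concentrated on the diagonal $\oplus_{i} H^{i}(BL, \Omega^{i})$, and there is a ring isomorphism $H^{*}_{\textup{H}}(BL/k) \cong O(\mathfrak{l})^{L}$ sending polynomial functions on $\mathfrak{l}$ of degree $i$ to classes in $H^{i}(BL, \Omega^{i})$ of Hodge degree $2i$. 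This already gives the first isomorphism in the statement.

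Next, I would pass to a maximal torus $T \subset L$ with Weyl group $W$. Because $L$ is a central isogeny cover of $GL(r)_{k}$, we have $W = S_{r}$, acting on $X(T)$ through its natural permutation action on the characters of the diagonal torus of $GL(r)_{k}$. The restriction $O(\mathfrak{l})^{L} \to O(\mathfrak{t})^{W}$ is injective by \cite[Lemma 8.2]{Tot}; I would then argue surjectivity using the torsion-index-one hypothesis, either by the standard Chevalley restriction argument (which goes through for reductive groups of torsion index invertible in $k$) or by exploiting the isogeny $\mathbb{G}_{m} \times SL(r)_{k} \to L$ together with the fact that $SL(r)_{k}$ has torsion index $1$ in every characteristic.

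It remains to identify $O(\mathfrak{t})$ explicitly. Since $L$ is the spin double cover of $GL(r)_{k}$, the character lattice $X(T)$ is generated by the characters $e_{1}, \ldots, e_{r}$ of the diagonal torus of $GL(r)_{k}$ together with the half-spin character $A \coloneqq \tfrac{1}{2}(e_{1} + \cdots + e_{r})$, subject to the single relation $2A = e_{1} + \cdots + e_{r}$. In characteristic $2$ this relation becomes $e_{1} + \cdots + e_{r} = 0$, so that $A$ becomes a free generator and
\[
O(\mathfrak{t}) \cong k[e_{1}, \ldots, e_{r}, A] / (e_{1} + \cdots + e_{r}).
\]
Taking $S_{r}$-invariants (with $S_{r}$ permuting the $e_{i}$ and fixing $A$) gives $O(\mathfrak{t})^{W} \cong k[A, s_{2}, \ldots, s_{r}]$, where $s_{i}$ is the $i$-th elementary symmetric polynomial in the $e_{j}$ and $s_{1}$ is killed by the relation. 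Relabeling $c_{i} \coloneqq s_{i}$ produces the claimed ring $k[A, c_{2}, \ldots, c_{r}]$ with $|A| = 2$ and $|c_{i}| = 2i$.

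The main obstacle will be the restriction-to-the-torus step, since Chaput--Romagny (Theorem \ref{theoreminvariants}) applies only to simple groups and $L$ is reductive but not simple; the torsion-index-one hypothesis is what must replace simplicity. Conceptually, the key phenomenon is the characteristic-$2$ collapse of the relation $2A = \sum e_{i}$, which is precisely what produces the extra generator $A$ (in place of the would-be $c_{1} = \sum e_{i}$) in the final answer.
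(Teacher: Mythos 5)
Your overall route is the same as the paper's: torsion index $1$ plus Totaro's theorem gives $H^{*}_{\textup{H}}(BL/k) \cong O(\frak{l})^{L}$ (the paper cites \cite[Theorem 9.1]{Tot} rather than Theorem 4.1, but this is the same point); then restriction to a maximal torus gives $O(\frak{l})^{L} \cong O(\frak{t})^{W}$, where the paper simply invokes Theorem \ref{theoreminvariants} -- so your concern about simplicity versus reductivity is fair, but it is not where the paper (or the real difficulty) lies; and your description of $X^{*}(T)\otimes k$ via the characteristic-$2$ collapse of $2A = x_{1}+\cdots+x_{r}$ is exactly the paper's.

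The genuine gap is the final step, which you treat as routine: the assertion that $\bigl(k[x_{1},\ldots,x_{r},A]/(x_{1}+\cdots+x_{r})\bigr)^{S_{r}} = k[A,c_{2},\ldots,c_{r}]$. Since $\textup{char}(k)=2$ divides $|S_{r}|$, the invariants of a quotient are not automatically generated by the images of invariants, and the usual Chevalley/Newton arguments do not apply. In fact the naive statement fails for $r=2$: there $k[x_{1},x_{2}]/(x_{1}+x_{2})$ is $k[\bar{x}_{1}]$ with trivial $S_{2}$-action, so the invariant ring is $k[\bar{x}_{1},A]$, whereas the image of $c_{2}$ is only $\bar{x}_{1}^{2}$. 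For $r\geq 3$ (the relevant range, since $r=\lfloor n/2\rfloor$ with $n\geq 7$) the statement is true, but it needs a genuine input from modular invariant theory: the paper cites \cite[Proposition 4.1]{Nak} on invariants of pseudoreflection groups in positive characteristic precisely at this point. Without that citation or an equivalent argument (for example a Dickson-invariant computation in small rank), your derivation of the presentation $k[A,c_{2},\ldots,c_{r}]$ is incomplete, even though the first two isomorphisms in your plan are sound.
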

\begin{proof}

\indent 
From Proposition \ref{propmetalinear} and \cite[Theorem 9.1]{Tot}, $$H^{*}_{\textup{H}}(BL/k) \cong O(\frak{l})^{L}.$$ Let $T$ be a maximal torus in $L$ with Lie algebra $\frak{t}$ and Weyl group $W.$ From Theorem \ref{theoreminvariants}, $O(\frak{l})^{L} \cong O(\frak{t})^{W}.$ To compute $O(\frak{t})^{W},$ we use that $L$ is a double cover of $GL(r)_{k}.$  We have $$S(X^{*}(T) \otimes k) \cong \mathbb{Z}[x_{1}, \ldots, x_{r},A]/(2A=x_{1}+\cdots +x_{r}) \otimes k \cong k[x_{1}, \ldots, x_{r}, A]/(x_{1}+\cdots +x_{r}).$$ The Weyl group $W$ of $L$ is isomorphic to the symmetric group $S_{r}$ and acts on $S(X^{*}(T)\otimes k) $ by permuting $x_{1}, \ldots, x_{r}.$ From \cite[Proposition 4.1]{Nak}, $$(k[x_{1}, \ldots, x_{r}, A]/(x_{1}+\cdots +x_{r}))^{S_{r}}=k[A, c_{2}, \ldots, c_{r}]$$ where $c_{1}, \ldots, c_{r}$ are the elementary symmetric polynomials in the variables $x_{1}, \ldots, x_{r}.$
\end{proof}

\begin{theorem} \label{spin7prop} Let $n=7.$ The Hodge spectral sequence for $BG$ degenerates and $$H^{*}_{\textup{dR}}(BG/k)\cong H_{\textup{H}}^{*}(BG/k)\cong k[y_{4}, y_{6}, y_{7}, y_{8}]$$ where $|y_{i}|=i$ for $i=4,6,7,8.$
\end{theorem}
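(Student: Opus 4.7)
My plan is to adapt the proof of Theorem \ref{cohofBG2} ($G_2$) to $\textup{Spin}(7)$. The first step is to compute $O(\mathfrak{g})^G \cong O(\mathfrak{t})^W$ (Theorem \ref{theoreminvariants}). Present the $B_3$ weight lattice as $P = \mathbb{Z}\langle \chi_1, \chi_2, \chi_3, \omega\rangle/(2\omega - \chi_1 - \chi_2 - \chi_3)$ where $\omega$ is the spin fundamental weight. Reducing modulo $2$, $\mathfrak{t}^{*} = k\langle \chi_1, \chi_2, \chi_3, \omega\rangle/(\chi_1 + \chi_2 + \chi_3)$. The Weyl group $W(B_3) = (\mathbb{Z}/2)^3 \rtimes S_3$ acts with $S_3$ permuting the $\chi_i$ (and fixing $\omega$), while each sign-reflection $s_i$ fixes all $\chi_j$ (since $-\chi_i = \chi_i$ in characteristic $2$) and sends $\omega \mapsto \omega + \chi_i$. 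The translation-invariants under the effective $(\mathbb{Z}/2)^2$-action are generated over $k[\chi_1, \chi_2, \chi_3]/(\chi_1 + \chi_2 + \chi_3)$ by the norm polynomial $N(\omega) = \prod_{v \in \mathbb{F}_2\langle\chi_1, \chi_2\rangle}(\omega + v) = \omega^4 + \sigma_2 \omega^2 + \sigma_3 \omega$, where $\sigma_i$ is the $i$th elementary symmetric polynomial in $\chi_1,\chi_2,\chi_3$. Taking $S_3$-invariants yields
$$O(\mathfrak{g})^G = k[\sigma_2, \sigma_3, N(\omega)],$$
whose generators of polynomial degrees $2, 3, 4$ give diagonal classes $y_4 \in H^{2,2}(BG)$, $y_6 \in H^{3,3}(BG)$, $y_8 \in H^{4,4}(BG)$.

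Next I run the spectral sequence of Proposition \ref{Serre SS} with inputs $H^*_{\textup{H}}((G/P)/k) = k[e_1, e_3]/(e_1^4, e_3^2)$ (Proposition \ref{cohflagvar} with $s = 3$) and $H^*_{\textup{H}}(BL/k) = k[A, c_2, c_3]$ (Corollary \ref{cohofBL}). Exactly as in Theorem \ref{cohofBG2}: $H^2_{\textup{H}}(BG) = 0$ (since $O(\mathfrak{g})^G$ starts in degree $4$ and $H^i(BG,\mathcal{O}) = 0$), forcing $E^{2, 0}_\infty = 0$, so $e_1$ is a permanent cycle; the even parity of $H^*_{\textup{H}}(BL/k)$ then gives $H^3_{\textup{H}}(BG) = H^5_{\textup{H}}(BG) = 0$. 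For each even $n$, I compare $\dim H^n_{\textup{H}}(BL/k)$ with the subspaces $H^{(n-2i)/2,(n-2i)/2}(BG) \cdot e_1^i \hookrightarrow E^{n-2i,\, 2i}_\infty$ (for $0 \leq i \leq 3$), and via a combinatorial bijection of monomials analogous to the one in Theorem \ref{cohofBG2}, these inclusions become equalities. At $n = 6$ this forces $e_3$ to be killed: $\dim H^6_{\textup{H}}(BL/k) = 3$ is saturated by $e_1^3 \in E^{0,6}_\infty$, $y_4 e_1 \in E^{4,2}_\infty$, and $y_6 \in E^{6,0}_\infty$. Since the earlier differentials on $e_3$ vanish by the sparsity of the $E_2$-page, the only option is $d_7(e_3) \neq 0$ in $E^{7,0}_7$; the Hodge refinement of the spectral sequence (its differentials preserve the total $\Omega$-degree) places $y_7 := d_7(e_3)$ in $H^4(BG, \Omega^3)$.

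Finally, I apply Zeeman's comparison theorem (Theorem \ref{Zeeman}). Define a model spectral sequence $I_* = F_* \otimes H_* \otimes k[y_4, y_6, y_8]$, where $F_*$ has $F^{0,2i}_2 = k \cdot e_1^i$ for $0 \leq i \leq 3$ with $e_1$ a permanent cycle, and $H_* = \Delta(e_3) \otimes k[y]$ has $e_3 \in H^{0,6}_2$ transgressive with $d_7(e_3) = y \in H^{7,0}_2$. The natural map $\alpha : I_* \to E_*$ sending $e_1, e_3, y_4, y_6, y_8$ to their counterparts and $y$ to $y_7$ induces an isomorphism on $0$th columns (both equal $k[e_1, e_3]/(e_1^4, e_3^2)$) and, by the above dimension counts, on $E_\infty$-pages. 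Zeeman then gives that $\alpha$ is an isomorphism on $0$th rows, so
$$H^*_{\textup{H}}(BG/k) \cong k[y_4, y_6, y_7, y_8].$$
Since the generators lie in $\oplus_i H^{i,i}(BG)$ (for $y_4, y_6, y_8$) and in $\oplus_i H^{i+1,i}(BG)$ (for $y_7$), Proposition \ref{propHodgeSSG} yields degeneration of the Hodge spectral sequence at $E_1$, whence $H^*_{\textup{dR}}(BG/k) \cong H^*_{\textup{H}}(BG/k) \cong k[y_4, y_6, y_7, y_8]$ as graded rings. The main obstacle will be executing the combinatorial dimension counts at each even $n$, which are more intricate than in the $G_2$ case due to the extra generator $y_8$ and the larger flag variety.
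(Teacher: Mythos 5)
Your proposal is correct and follows essentially the same route as the paper: the Leray--Serre-type spectral sequence of Proposition \ref{Serre SS} for the maximal parabolic, $e_{1}$ a permanent cycle, $e_{3}$ transgressive with $d_{7}(e_{3})=y_{7}\in H^{4}(BG,\Omega^{3})$, the monomial-counting bijection to identify $E_{\infty}$, Zeeman's comparison theorem \ref{Zeeman}, and degeneration via Proposition \ref{propHodgeSSG}. The only divergence is that you compute $O(\frak{g})^{G}\cong k[y_{4},y_{6},y_{8}]$ directly from the $B_{3}$ weight lattice mod $2$ via the norm polynomial $N(\omega)=\omega^{4}+\sigma_{2}\omega^{2}+\sigma_{3}\omega$ (which checks out), whereas the paper simply cites \cite[Section 12]{Tot} for this input.
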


\begin{proof} 
\indent From \cite[Section 12]{Tot}, $$O(\frak{g})^{G} \cong k[y_{4}, y_{6}, y_{8}]$$ where $|y_{i}|=i$ in $H_{\textup{H}}^{*}(BG/k)$, viewing  $O(\frak{g})^{G}$ as a subring of $H_{\textup{H}}^{*}(BG/k).$ Consider the spectral sequence \begin{equation} \label{SSSpin7} E_{2}^{i,j}=H_{\textup{H}}^{i}(BG/k)\otimes H_{\textup{H}}^{j}((G/P)/k)\Rightarrow H_{\textup{H}}^{i+j}(BL/k) \end{equation} from Proposition \ref{Serre SS}. From Proposition \ref{cohflagvar} and Corollary \ref{cohofBL}, $$H_{\textup{H}}^{*}((G/P)/k) \cong k[e_{1},e_{2},e_{3}]/(e_{i}^{2}=e_{2i})$$ and $$H_{\textup{H}}^{*}(BL/k) \cong k[A, c_{2}, c_{3}].$$   

\indent First, we show that $e_{1} \in E^{0,2}_{2}$ is transgressive with $d_{3}(e_{1})=0.$ From the filtration on $H_{\textup{H}}^{2}(BL/k)=k \cdot A$ given by \ref{SSSpin7}, we have $$1=\textup{dim}_{k} E^{0,2}_{\infty}+\textup{dim}_{k} E^{2,0}_{\infty}=\textup{dim}_{k} E^{0,2}_{\infty}+\textup{dim}_{k} E^{2,0}_{2}.$$ As $H_{\textup{H}}^{*}(BL/k)= \oplus_{i}H^{i}(BL,\Omega^{i}),$ $E^{2,0}_{2}=H^{1}(BG,\Omega^{1})=0.$ Hence, $E^{0,2}_{\infty}=E^{0,2}_{2}=k \cdot e_{1}$ which implies that all differentials on $e_{1}$ are $0.$ As $e_{2}=e_{1}^{2},$ it follows that all differentials in \ref{SSSpin7} are $0$ on $e_{2} \in E^{0,4}_{2}.$ Hence, $E^{4,2}_{\infty} \cong E^{4,2}_{2} \cong k \cdot (y_{4} \otimes e_{1})$ and $E^{6,0}_{\infty} \cong E^{6,0}_{2} \cong k \cdot y_{6}.$

\indent We next show that $e_{3} \in E^{0,6}_{2}$ is transgressive with $d_{7}(e_{3}) \neq 0.$ As all differentials on $e_{1}$ are $0$ and $H_{\textup{H}}^{i}(BL/k)=0$ for $i$ odd, the spectral sequence \ref{SSSpin7} implies that $E^{3,0}_{2}=E^{5,0}_{2}=0.$ Consider the filtration of \ref{SSSpin7} on $H_{\textup{H}}^{6}(BL/k).$ We have $$\textup{dim}_{k}H_{\textup{H}}^{6}(BL/k)=3=\textup{dim}_{k} E^{6,0}_{\infty} + \textup{dim}_{k} E^{4,2}_{\infty} + \textup{dim}_{k} E^{0,6}_{\infty}=2+\textup{dim}_{k} E^{0,6}_{\infty}$$ which implies that $E^{0,6}_{\infty} \cong  k \cdot e_{1}e_{2}.$ As $E^{3,0}_{2}=E^{5,0}_{2}=0,$ we must then have $e_{3} \in E^{0,6}_{7}$ and $0 \neq d_{7}(e_{3}) \in E^{7,0}_{7}.$ The class $d_{7}(e_{3})$ lifts to a non-zero class $y_{7} \in H^{4}(BG, \Omega^{3}) \subseteq E^{7,0}_{2}=H_{\textup{H}}^{7}(BG/k).$

\[
\begin{tikzcd}
 k\cdot e_{1}e_{2} \oplus k \cdot e_{3} \arrow[ddddddrrrrrrr, ""] \\
0 & 0 & 0& 0 &0 &0& 0& 0\\
k \cdot e_{2} \arrow[ddddrrrrr, ""]   & 0& 0& 0& k \cdot e_{2}y_{4} & 0& k \cdot e_{2}y_{6} & k \cdot e_{2}y_{7}\\
0&0&0&0&0&0&0&0 \\
k \cdot e_{1}  \arrow[ddrrr,  ""] & 0& 0& 0& k \cdot e_{1}y_{4} & 0& k \cdot e_{1}y_{6} & k \cdot e_{1}y_{7}\\
0 & 0 & 0& 0 &0 &0& 0& 0\\
k & 0 & 0  & 0 & k \cdot y_{4}& 0& k \cdot y_{6}& k \cdot y_{7}
\end{tikzcd}
\]

\indent We can now determine the $E_{\infty}$ page of \ref{SSSpin7}. For $n$ odd, $E^{i,n-i}_{\infty}=0$ since $H^{*}_{\textup{H}}(BL/k)$ is concentrated in even degrees. Assume that $n \in \mathbb{N}$ is even. The $k$-dimension of $H_{\textrm{H}}^{n}(BL/k)$ is equal to the cardinality of the set $$S_{n}=\{(a,b,c) \in \mathbb{Z}_{\geq 0} \times \mathbb{Z}_{\geq 0} \times \mathbb{Z}_{\geq 0}: 2a+4b+6c=n\}.$$ For $i=0,1,2,3$, set $V_{i,n} \coloneqq H^{(n-2i)/2}(BG, \Omega^{(n-2i)/2}).$ For $i=0,1,2,3$, $\textup{dim}_{k} V_{i,n}$ is equal to the cardinality of the set $S_{i,n}=\{(a,b,c) \in \mathbb{Z}_{\geq 0} \times \mathbb{Z}_{\geq 0} \times \mathbb{Z}_{\geq 0}: 4a+6b+8c=n-2i\}.$ As all differentials in \ref{SSSpin7} are $0$ on $e_{1} \in H^{2}_{\textup{H}}((G/P)/k),$ $$V_{i,n} \cong V_{i,n} \otimes k\cdot e_{1}^{i} \subseteq E^{n-2i,2i}_{\infty}$$ for $i=0,1,2,3$.

\indent 
Define a bijection $f_{n}:S_{n} \to S_{0,n} \cup S_{1,n} \cup S_{2,n} \cup S_{3,n}$ by 
\[f_{n}(a,b,c)= \begin{cases} 
      (b,c, a/4)\in S_{0,n} \, \, \textup{if} \, \, a \equiv 0 \mod{4}, \\          
      
      (b,c, (a-1)/4)\in S_{1,n} \, \, \textup{if} \, \, a \equiv 1 \mod{4}, \\          
      
      (b,c, (a-2)/4)\in S_{2,n} \, \, \textup{if} \, \, a \equiv 2 \mod{4},  \\ 
      
      (b,c, (a-3)/4)\in S_{3,n} \, \, \textup{if} \, \, a \equiv 3 \mod{4}.
   \end{cases}
\] Then $$\textup{dim}_{k} H_{\textrm{H}}^{n}(BL/k)=|S_{n}|=|S_{0,n}|+|S_{1,n}|+|S_{2,n}|++|S_{3,n}|.$$ As $$\textup{dim}_{k} H_{\textrm{H}}^{n}(BL/k) \geq E^{n,0}_{\infty}+ E^{n-2,2}_{\infty}+ E^{n-4,4}_{\infty}+ E^{n-6,6}_{\infty}$$ and $V_{i,n} \subseteq E^{n-2i,2i}_{\infty}$ for $i=0,1,2,3$, it follows that $V_{i,n} \cong E^{n-2i,2i}_{\infty}$ for $i=0,1,2,3$ and $E^{n-2i,2i}_{\infty}=0$ for $i \geq 4.$
 
\indent We now use Zeeman's comparison theorem \ref{Zeeman} to finish the computation of the Hodge cohomology of $BG.$ Let $F_{*}$ denote the cohomological spectral sequence of $k$-vector spaces with $E_{2}$ page concentrated on the $0$th column given by $F_{2}=\Delta(e_{1},e_{2})$ where $e_{i}$ is of bidegree $(0,2i)$ for $i=1,2.$ As all differentials are $0$ on $e_{1} \in E^{0,2}_{2}$ in the spectral sequence \ref{SSSpin7}, there is a map of spectral sequences $F_{*} \to E_{*}$ taking $e_{i} \in  F^{0,2i}_{2}$ to $e_{i} \in  E^{0,2i}_{2}$ for $i=1,2.$ Fix a variable $y.$ Let $H_{*}$ be the spectral sequence with $E_{2}$ page given by $H_{2}=\Delta(e_{3}) \otimes k[y]$ where $e_{3}$ is of bidegree $(0,6)$, $y$ is of bidegree $(7,0),$ and $e_{3}$ is transgressive with $d_{7}(e_{3}y^{i})=y^{i+1}$ for all $i.$ As $e_{3} \in E^{0,6}_{2}$ is transgressive with $d_{7}(e_{3})=y_{7},$ there exists a map of spectral sequences $H_{*} \to E_{*}$ taking $e_{3} \in H^{0,6}_{2}$ to $e_{3} \in E^{0,6}_{2}$ and $y \in H^{7,0}_{2}$ to $y_{7} \in E^{7,0}_{2}.$ 

\indent Elements in the ring of $G$-invariants $k[y_{4}, y_{6}, y_{8}]$ are permanent cycles in the spectral sequence \ref{SSSpin7}. Tensoring maps of spectral sequences, we get a map $$\alpha: I_{*} \coloneqq F_{*} \otimes H_{*} \otimes k[y_{4}, y_{6},y_{8}] \to E_{*}$$ of spectral sequences. As $I_{\infty} \cong F_{2} \otimes k[y_{4}, y_{6},y_{8}],$ $\alpha$ induces isomorphisms on $E_{\infty}$ terms and on the $0$th columns of the $E_{2}$ pages. Hence, by Zeeman's comparison theorem \ref{Zeeman}, $\alpha$ induces an isomorphism on the $0$th rows of the $E_{2}$ pages. Thus, $$H^{*}_{\textup{H}}(BG/k)\cong k[y_{4}, y_{6}, y_{7},y_{8}].$$ The Hodge spectral sequence for $BG$ degenerates by Proposition \ref{propHodgeSSG}.
\end{proof}

\indent As Hodge cohomology commutes with extensions of the base field, we have the following result.

\begin{corollary} Let $k$ be a field of characteristic $2$ and let $G$ be a $k$-form of $\textup{Spin}(7).$ Then $$H^{*}_{\textup{H}}(BG/k)\cong k[x_{4},x_{6},x_{7},x_{8}]$$ where $|x_{i}|=i$ for all $i.$
\end{corollary}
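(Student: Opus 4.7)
The plan is to imitate the proof of Corollary 2.2 verbatim, using Theorem 3.4 (the split case) in place of Theorem 2.1. Write $k_s$ for a separable closure of $k$. Since $\textup{Spin}(7)$ is defined over the prime field, every $k$-form $G$ of $\textup{Spin}(7)$ becomes isomorphic to the split form after base change to $k_s$, i.e.\ $G \times_k \textup{Spec}(k_s) \cong \textup{Spin}(7)_{k_s}$. By Theorem 3.4,
\[
H^{*}_{\textup{H}}\bigl((\textup{Spin}(7)_{k_s})/k_s\bigr) \cong k_s[y_4, y_6, y_7, y_8]
\]
with $|y_i| = i$. Since Hodge cohomology commutes with flat base extension, this is identified with $H^{*}_{\textup{H}}(BG/k) \otimes_k k_s$.

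The remaining task is a short descent argument. First, comparing dimensions in each degree gives $\dim_k H^i_{\textup{H}}(BG/k) = \dim_{k_s} H^i_{\textup{H}}(BG_{k_s}/k_s)$, so $H^i_{\textup{H}}(BG/k)$ is $1$-dimensional for $i=4,6,7$ and $2$-dimensional for $i=8$. Pick any nonzero elements $x_4, x_6, x_7$ in these degrees, and pick $x_8 \in H^{8}_{\textup{H}}(BG/k)$ so that $\{x_4^2, x_8\}$ is a $k$-basis. This yields a graded $k$-algebra homomorphism
\[
\varphi: k[x_4, x_6, x_7, x_8] \to H^{*}_{\textup{H}}(BG/k).
\]
After tensoring with $k_s$, the image of each $x_i$ is a nonzero multiple of $y_i$ modulo decomposables (for $i=8$, a nonzero multiple of $y_8$ modulo $k_s \cdot y_4^2$, by the choice of $x_8$). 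Hence $\varphi \otimes_k k_s$ is an isomorphism of polynomial rings, and by faithful flatness of $k \to k_s$, $\varphi$ itself is an isomorphism.

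There is no real obstacle: the only subtlety is that $\dim_k H^{8}_{\textup{H}}(BG/k) = 2$, so one must be slightly careful to pick $x_8$ linearly independent from $x_4^2$ in order to obtain generators (rather than taking an arbitrary nonzero element, as suffices in the $G_2$ case). Everything else is formal from Theorem 3.4 together with flat base change for Hodge cohomology.
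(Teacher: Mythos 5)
Your argument is correct and is essentially the paper's own: the paper deduces the corollary from Theorem 3.4 exactly as in Corollary 2.2, by base changing to the separable closure $k_{s}$ and using that Hodge cohomology commutes with field extensions, so $H^{*}_{\textup{H}}(BG/k)\otimes_{k}k_{s}\cong k_{s}[y_{4},y_{6},y_{7},y_{8}]$ and the polynomial structure descends. Your extra care in degree $8$ (choosing $x_{8}$ linearly independent from $x_{4}^{2}$) is a correct and welcome elaboration of the descent step the paper leaves implicit.
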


\begin{theorem}
Let $n=8.$ The Hodge spectral sequence for $BG$ degenerates and $$H^{*}_{\textup{dR}}(BG/k)\cong H_{\textup{H}}^{*}(BG/k)\cong k[y_{4}, y_{6}, y_{7}, y_{8}, y_{8}']$$ where $|y_{i}|=i$ for $i=4,6,7,8$ and $|y_{8}'|=8.$
\end{theorem}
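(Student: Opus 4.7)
The plan is to follow the template of the proof of Theorem \ref{spin7prop}, adjusting the input data for $n=8$. First I would assemble the three relevant rings. By Proposition \ref{cohflagvar}, since $s = \lfloor 7/2 \rfloor = 3$ is unchanged from the $n=7$ case, $H^*_{\textup{H}}((G/P)/k) \cong k[e_1, e_2, e_3]/(e_i^2 = e_{2i})$. By Corollary \ref{cohofBL}, since now $r = \lfloor 8/2 \rfloor = 4$, $H^*_{\textup{H}}(BL/k) \cong k[A, c_2, c_3, c_4]$, carrying one extra generator $c_4$ of degree $8$ compared to the Spin(7) setting. Finally, one computes the invariant ring $O(\mathfrak{g})^G \cong k[y_4, y_6, y_8, y_8']$ with $|y_i|=i$, via Theorem \ref{theoreminvariants} and the Weyl-group action on the Spin(8) torus in characteristic $2$; the extra Pfaffian-type degree-$8$ generator $y_8'$, coming from $D_4$ triality and the half-spin weights, is what distinguishes this from the Spin(7) case.

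Next I would run the spectral sequence
\begin{equation*}
E^{i,j}_2 = H^i_{\textup{H}}(BG/k) \otimes H^j_{\textup{H}}((G/P)/k) \Rightarrow H^{i+j}_{\textup{H}}(BL/k)
\end{equation*}
of Proposition \ref{Serre SS}, repeating the low-degree transgression analysis from the Spin(7) argument. Since $E^{2,0}_2 = H^1(BG, \Omega^1) = 0$, the class $e_1 \in E^{0,2}_2$ (hence also $e_2 = e_1^2$) is a permanent cycle. At total degree $6$ one has $\dim_k H^6_{\textup{H}}(BL/k) = 3$ with basis $\{A^3, A c_2, c_3\}$, while the potentially nonzero contributions are $E^{6,0}_2 = k \cdot y_6$, $E^{4,2}_2 = k\{y_4 \otimes e_1\}$, and $E^{0,6}_2 = k\{e_1 e_2, e_3\}$, with $E^{2,4}_2 = 0$. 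The survivors $y_6$, $y_4 e_1$, and $e_1 e_2 = e_1^3$ already account for dimension $3$, so $e_3$ must die; since $E^{3,0}_2 = E^{5,0}_2 = 0$, the first possible differential is $d_7$, which produces a nonzero transgression $d_7(e_3)$ lifting to a class $y_7 \in H^4(BG, \Omega^3) \subseteq H^7_{\textup{H}}(BG/k)$.

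The third step is a dimension count pinning down $E_\infty$ in each even total degree $n$. Parameterizing a basis of $H^n_{\textup{H}}(BL/k)$ by $S_n = \{(a,b,c,d)\in\mathbb{Z}_{\geq 0}^4 : 2a+4b+6c+8d=n\}$, and a basis of the diagonal piece $V_{i,n} = H^{(n-2i)/2}(BG, \Omega^{(n-2i)/2})$ by $S_{i,n} = \{(\alpha,\beta,\gamma,\delta) : 4\alpha+6\beta+8\gamma+8\delta=n-2i\}$, I would exhibit the bijection
\begin{equation*}
f_n \colon S_n \longrightarrow \bigsqcup_{i=0}^{3} S_{i,n}, \qquad (a,b,c,d) \longmapsto (b, c, d, (a-i)/4) \in S_{i,n} \text{ when } a \equiv i \pmod 4.
\end{equation*}
Combined with the injections $V_{i,n} \otimes k \cdot e_1^i \hookrightarrow E^{n-2i, 2i}_\infty$ (justified as in the Spin(7) case: diagonal classes cannot lie in the image of a transgression, since such images necessarily contain $y_7$ and thus sit off the diagonal), this forces $V_{i,n} \otimes k \cdot e_1^i \cong E^{n-2i, 2i}_\infty$ for $i = 0, 1, 2, 3$ and $E^{n-2i, 2i}_\infty = 0$ for $i \geq 4$.

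Finally I would apply Zeeman's comparison theorem (Theorem \ref{Zeeman}) exactly as in the Spin(7) proof. With $F_*$ the spectral sequence concentrated on the $0$th column with $F_2 = \Delta(e_1, e_2)$, $H_*$ the transgression model $H_2 = \Delta(e_3) \otimes k[y]$ with $d_7(e_3 y^i) = y^{i+1}$, and $k[y_4, y_6, y_8, y_8']$ sitting trivially on the $0$th row, the tensor product yields
\begin{equation*}
\alpha \colon I_* \coloneqq F_* \otimes H_* \otimes k[y_4, y_6, y_8, y_8'] \longrightarrow E_*
\end{equation*}
which is an isomorphism on $E_\infty$ and on the $0$th column of $E_2$, hence on the $0$th row, giving $H^*_{\textup{H}}(BG/k) \cong k[y_4, y_6, y_7, y_8, y_8']$. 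Since this ring is generated by classes in $H^i(BG, \Omega^i)$ and $H^{i+1}(BG, \Omega^i)$, Proposition \ref{propHodgeSSG} yields degeneration of the Hodge spectral sequence, and hence $H^*_{\textup{dR}}(BG/k) \cong H^*_{\textup{H}}(BG/k)$. The main obstacle I expect is the invariant-ring computation $O(\mathfrak{g})^G \cong k[y_4, y_6, y_8, y_8']$: one must handle the $D_4$ Weyl-group invariants on the Spin(8) character lattice in characteristic $2$ carefully, tracking the half-spin weights that produce the extra degree-$8$ generator $y_8'$. Once this input is secured, the remainder is a direct parallel of the Spin(7) argument.
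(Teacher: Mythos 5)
Your proposal is correct and follows essentially the same route as the paper: the same spectral sequence from Proposition \ref{Serre SS} with inputs $k[e_{1},e_{2},e_{3}]/(e_{i}^{2}=e_{2i})$, $k[A,c_{2},c_{3},c_{4}]$ and $O(\frak{g})^{G}\cong k[y_{4},y_{6},y_{8},y_{8}']$ (which the paper simply cites from Totaro's Section 12 rather than recomputing), the same transgression analysis producing $y_{7}$, the same counting bijection $f_{n}$, and the same application of Theorem \ref{Zeeman} followed by Proposition \ref{propHodgeSSG}. The only discrepancy favors your version: the paper writes $F_{2}=\Delta(e_{1},e_{2},e_{4})$, but since $e_{4}=0$ in $H^{*}_{\textup{H}}((G/P)/k)$ when $n=8$, your choice $F_{2}=\Delta(e_{1},e_{2})$ is the one that actually matches the $0$th column and the computed $E_{\infty}$ page.
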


\begin{proof}
\indent From \cite[Section 12]{Tot}, $$O(\frak{g})^{G} \cong k[y_{4}, y_{6}, y_{8},y_{8}']$$ where $|y_{i}|=i$ and $|y_{8}'|=8$ in $H_{\textup{H}}^{*}(BG/k)$, viewing  $O(\frak{g})^{G}$ as a subring of $H_{\textup{H}}^{*}(BG/k).$ Consider the spectral sequence \begin{equation} \label{SSSpin8} E_{2}^{i,j}=H_{\textup{H}}^{i}(BG/k)\otimes H_{\textup{H}}^{j}((G/P)/k)\Rightarrow H_{\textup{H}}^{i+j}(BL/k) \end{equation} from Proposition \ref{Serre SS}. From Proposition \ref{cohflagvar} and Corollary \ref{cohofBL}, $$H_{\textup{H}}^{*}((G/P)/k) \cong k[e_{1},e_{2},e_{3}]/(e_{i}^{2}=e_{2i})$$ and $$H_{\textup{H}}^{*}(BL/k) \cong k[A, c_{2}, c_{3},c_{4}].$$ Calculations similar to those performed in the proof of Proposition \ref{spin7prop} show that all differentials are zero on $e_{1}$ and $e_{3}\in E^{0,6}_{2}$ is transgressive with $0 \neq d_{7}(e_{3})=y_{7} \in H^{4}(BG, \Omega^{3}).$  We have $H^{m}_{\textup{H}}(BG/k) \cong H^{m}_{\textup{H}}(B\textup{Spin}(7)_{k}/k)$ for $m<8$ and $H^{8}_{\textup{H}}(BG/k)=k \cdot y_{8} \oplus k \cdot y_{8}'.$

\indent We can now determine the $E_{\infty}$ terms for \ref{SSSpin8}. For $n$ odd, $E^{i,n-i}_{\infty}=0$ since $H^{*}_{\textup{H}}(BL/k)$ is concentrated in even degrees. Assume that $n \in \mathbb{N}$ is even. The $k$-dimension of $H_{\textrm{H}}^{n}(BL/k)$ is equal to the cardinality of the set $$S_{n}=\{(a,b,c,d) \in \mathbb{Z}_{\geq 0} \times \mathbb{Z}_{\geq 0} \times \mathbb{Z}_{\geq 0}\times \mathbb{Z}_{\geq 0}: 2a+4b+6c+8d=n\}.$$ For $i=0,1,2,3$, set $V_{i,n} \coloneqq H^{(n-2i)/2}(BG, \Omega^{(n-2i)/2}).$ For $i=0,1,2,3$, $\textup{dim}_{k} V_{i,n}$ is equal to the cardinality of the set $S_{i,n}=\{(a,b,c,d) \in \mathbb{Z}_{\geq 0} \times \mathbb{Z}_{\geq 0} \times \mathbb{Z}_{\geq 0}\times \mathbb{Z}_{\geq 0}: 4a+6b+8c+8d=n-2i\}.$ As all differentials in \ref{SSSpin8} are $0$ on $e_{1} \in H^{2}_{\textup{H}}((G/P)/k),$ $$V_{i,n} \cong V_{i,n} \otimes k\cdot e_{1}^{i} \subseteq E^{n-2i,2i}_{\infty}$$ for $i=0,1,2,3$.

\indent 
Define a bijection $f_{n}:S_{n} \to S_{0,n} \cup S_{1,n} \cup S_{2,n} \cup S_{3,n}$ by 
\[f_{n}(a,b,c,d)= \begin{cases} 
      (b,c,d, a/4)\in S_{0,n} \, \, \textup{if} \, \, a \equiv 0 \mod{4}, \\          
      
      (b,c,d, (a-1)/4)\in S_{1,n} \, \, \textup{if} \, \, a \equiv 1 \mod{4}, \\          
      
      (b,c,d, (a-2)/4)\in S_{2,n} \, \, \textup{if} \, \, a \equiv 2 \mod{4},  \\ 
      
      (b,c,d, (a-3)/4)\in S_{3,n} \, \, \textup{if} \, \, a \equiv 3 \mod{4}.
   \end{cases}
\] Then $$\textup{dim}_{k} H_{\textrm{H}}^{n}(BL/k)=|S_{n}|=|S_{0,n}|+|S_{1,n}|+|S_{2,n}|++|S_{3,n}|.$$ As $$\textup{dim}_{k} H_{\textrm{H}}^{n}(BL/k) \geq E^{n,0}_{\infty}+ E^{n-2,2}_{\infty}+ E^{n-4,4}_{\infty}+ E^{n-6,6}_{\infty}$$ and $V_{i,n} \subseteq E^{n-2i,2i}_{\infty}$ for $i=0,1,2,3$, it follows that $V_{i,n} \cong E^{n-2i,2i}_{\infty}$ for $i=0,1,2,3$ and $E^{n-2i,2i}_{\infty}=0$ for $i \geq 4.$

\indent Let $F_{*}$ denote the spectral sequence with $F_{2}=\Delta(e_{1},e_{2},e_{4})$ where $e_{i}$ is of bidegree $(0,2i).$ There is a map of spectral sequences $F_{*} \to E_{*}$ taking $e_{i}$ to $e_{i}$ for $i=1,2,4.$ Fix a variable $y.$ Let $H_{*}$ denote the spectral sequence with $E_{2}$ page $H_{2}=\Delta(e_{3}) \otimes k[y]$ where $e_{3}$ is of bidegree $(0,6),$ $y$ is of bidegree $(7,0),$ and $e_{3}$ is transgressive with $d_{7}(e_{3}y^{i})=y^{i+1}$ for all $i.$ There is an obvious map of spectral sequences $H_{*} \to E_{*}.$ Classes in the ring of $G$-invariants are permanent cycles in the spectral sequence \ref{SSSpin8}. Tensoring these maps, we get a map of spectral sequences $$\alpha: I_{*} \coloneqq F_{*} \otimes H_{*} \otimes k[y_{4}, y_{6},y_{8},y_{8}'] \to E_{*}.$$

\indent The map $\alpha$ induces an isomorphism on $E_{\infty}$ terms and on the $0$th columns of the $E_{2}$ pages. Zeeman's comparison theorem \ref{Zeeman} then implies that $\alpha$ induces an isomorphism on the $0$th rows of the $E_{2}$ pages. Thus, $$H_{\textup{H}}^{*}(BG/k)\cong k[y_{4}, y_{6}, y_{7}, y_{8}, y_{8}'].$$ Proposition \ref{propHodgeSSG} implies that the Hodge spectral sequence for $BG$ degenerates.

\end{proof}

\begin{corollary}
Let $k$ be a field of characteristic $2$ and let $G$ be a $k$-form for $\textup{Spin}(8).$ Then $$H_{\textup{H}}^{*}(BG/k)\cong k[y_{4}, y_{6}, y_{7}, y_{8}, y_{8}']$$ where $|y_{i}|=i$ for $i=4,6,7,8$ and $|y_{8}'|=8.$
\end{corollary}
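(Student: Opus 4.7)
The plan is to mimic exactly the descent argument already used in Corollary \ref{corollaryformcoh} (the analogue for $G_2$-forms) and in the unproved $\textup{Spin}(7)$-form corollary immediately above. The key observation is that every $k$-form of $\textup{Spin}(8)$ becomes isomorphic to the split form after passing to the separable closure $k_s$ of $k$, so $BG \times_k \textup{Spec}(k_s) \cong B\textup{Spin}(8)_{k_s}$. Applying the split-case theorem just proved to the right-hand side gives
\[
H_{\textup{H}}^{*}(B\textup{Spin}(8)_{k_s}/k_s) \cong k_s[y_4, y_6, y_7, y_8, y_8'].
\]

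Next I would invoke that Hodge cohomology commutes with extension of the base field (the same input cited in the $G_2$-form corollary) to conclude
\[
H_{\textup{H}}^{*}(BG/k) \otimes_k k_s \;\cong\; H_{\textup{H}}^{*}(BG \times_k \textup{Spec}(k_s)/k_s) \;\cong\; k_s[y_4, y_6, y_7, y_8, y_8'].
\]
In particular, each graded piece of $H_{\textup{H}}^{*}(BG/k)$ has the $k$-dimension predicted by a polynomial ring on generators in degrees $4,6,7,8,8$, and the natural map from the degree-$i$ piece over $k$ to the degree-$i$ piece over $k_s$ is faithfully flat, hence injective and bijective on dimensions.

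Finally, I would lift the generators: choose preimages $y_4, y_6, y_7, y_8, y_8'$ in the appropriate graded pieces of $H_{\textup{H}}^{*}(BG/k)$ of the $k_s$-polynomial generators, yielding a graded $k$-algebra map $k[y_4, y_6, y_7, y_8, y_8'] \to H_{\textup{H}}^{*}(BG/k)$. This map becomes an isomorphism after $-\otimes_k k_s$ by construction, and so is itself an isomorphism by faithfully flat descent. There is no serious obstacle here; the argument is a verbatim repetition of the descent performed in Corollary \ref{corollaryformcoh}, with the only new input being the split-form computation established in the preceding theorem.
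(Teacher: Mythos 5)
Your proposal is correct and follows essentially the same route as the paper, which deduces all of the form corollaries from the split-case theorems via the fact that Hodge cohomology commutes with extension of the base field (exactly as in the $G_2$-form corollary). The only loose phrase is ``choose preimages of the $k_s$-polynomial generators,'' since those generators need not lie in the image of $H_{\textup{H}}^{*}(BG/k) \to H_{\textup{H}}^{*}(BG/k)\otimes_k k_s$; one instead picks $k$-bases of the low-degree graded pieces (e.g.\ elements completing $y_4^2$ to a basis in degree $8$) and checks via Hilbert series that the resulting map becomes, and hence is, an isomorphism --- the same routine descent the paper leaves implicit.
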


\begin{theorem} \label{theoremspin10}
Let $n=9.$ The Hodge spectral sequence for $BG$ degenerates and $$H^{*}_{\textup{dR}}(BG/k)\cong H_{\textup{H}}^{*}(BG/k)\cong k[y_{4}, y_{6}, y_{7}, y_{8}, y_{16}]$$ where $|y_{i}|=i$ for $i=4,6,7,8,16.$ 
\end{theorem}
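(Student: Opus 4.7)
The plan is to imitate the arguments for $n=7,8$, adjusting for $s=\lfloor(n-1)/2\rfloor=4$: this adds the class $e_{4}=e_{1}^{4}$ on the $0$th column of the flag variety ring, while the ring of $G$-invariants acquires the new generator $y_{16}$ from the half-spin representation. First I would invoke \cite[Section 12]{Tot} for $O(\mathfrak{g})^{G}\cong k[y_{4},y_{6},y_{8},y_{16}]$ as a subring of $H_{\textup{H}}^{*}(BG/k)$ with $|y_{i}|=i$. By Proposition~\ref{cohflagvar} and Corollary~\ref{cohofBL}, the spectral sequence of Proposition~\ref{Serre SS} has $H_{\textup{H}}^{*}((G/P)/k)\cong k[e_{1},e_{2},e_{3},e_{4}]/(e_{i}^{2}=e_{2i})$ and $H_{\textup{H}}^{*}(BL/k)\cong k[A,c_{2},c_{3},c_{4}]$.

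Next I would run the same transgression analysis as in Theorem~\ref{spin7prop}. Since $H^{1}(BG,\Omega^{1})=0$, the filtration on $H_{\textup{H}}^{2}(BL/k)=k\cdot A$ forces $e_{1}\in E_{2}^{0,2}$ to be a permanent cycle, so the eight classes $1,e_{1},e_{1}^{2},\ldots,e_{1}^{7}$ survive on the $0$th column. A dimension count in total degrees $3,5$, and $6$ (using that $H_{\textup{H}}^{*}(BL/k)$ is concentrated in even degrees) then shows $H_{\textup{H}}^{3}(BG/k)=H_{\textup{H}}^{5}(BG/k)=0$ and forces $e_{3}\in E_{2}^{0,6}$ to transgress nontrivially via $d_{7}$ to a class $y_{7}\in H^{4}(BG,\Omega^{3})\subseteq E_{2}^{7,0}$. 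Setting $V_{i,n}:=H^{(n-2i)/2}(BG,\Omega^{(n-2i)/2})$, each $V_{i,n}\otimes k\cdot e_{1}^{i}$ injects into $E_{\infty}^{n-2i,2i}$ for $i=0,1,\ldots,7$. A bijection between $S_{n}:=\{(a,b,c,d):2a+4b+6c+8d=n\}$ and $\bigsqcup_{i=0}^{7}S_{i,n}$, with $S_{i,n}:=\{(a,b,c,d):4a+6b+8c+16d=n-2i\}$, obtained by division with remainder of the $A$-exponent by $8$, matches $\dim H_{\textup{H}}^{n}(BL/k)$ with $\sum_{i}\dim V_{i,n}$. This forces $V_{i,n}\cong E_{\infty}^{n-2i,2i}$ for $i\le 7$ and $E_{\infty}^{n-2i,2i}=0$ for $i\ge 8$.

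To conclude I would apply Zeeman's comparison theorem~\ref{Zeeman} to the map $\alpha:I_{*}:=F_{*}\otimes H_{*}\otimes k[y_{4},y_{6},y_{8},y_{16}]\to E_{*}$, where $F_{*}$ has $F_{2}=\Delta(e_{1},e_{2},e_{4})$ (an $8$-dimensional formal model for the surviving $e_{1}$-tower, with $|e_{i}|=2i$) and $H_{*}$ has $H_{2}=\Delta(e_{3})\otimes k[y]$ with $d_{7}(e_{3}y^{i})=y^{i+1}$. Since $\alpha$ induces isomorphisms on $E_{\infty}$ and on the $0$th columns of the $E_{2}$ pages, Zeeman gives $H_{\textup{H}}^{*}(BG/k)\cong k[y_{4},y_{6},y_{7},y_{8},y_{16}]$; degeneration of the Hodge spectral sequence then follows from Proposition~\ref{propHodgeSSG}. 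The main obstacle is the bookkeeping that matches $\dim H_{\textup{H}}^{n}(BL/k)$ with $\sum_{i=0}^{7}\dim V_{i,n}$, in particular verifying that the new degree-$16$ invariant $y_{16}$ exactly compensates for the longer $e_{1}$-tower; once this accounting is in place, the rest of the argument is a direct adaptation of the $\textup{Spin}(7)$ case.
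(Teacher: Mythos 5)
Your proposal follows the paper's proof essentially verbatim: same input data ($O(\mathfrak{g})^{G}\cong k[y_{4},y_{6},y_{8},y_{16}]$, the $k[e_{1},\ldots,e_{4}]/(e_{i}^{2}=e_{2i})$ flag-variety ring, $H_{\textup{H}}^{*}(BL/k)\cong k[A,c_{2},c_{3},c_{4}]$), the same transgression analysis ($e_{1}$ permanent, $d_{7}(e_{3})=y_{7}$), the same counting bijection $S_{n}\to\bigsqcup_{i=0}^{7}S_{i,n}$ via division of the $A$-exponent by $8$, and the same Zeeman comparison with $I_{*}=\Delta(e_{1},e_{2},e_{4})\otimes(\Delta(e_{3})\otimes k[y])\otimes k[y_{4},y_{6},y_{8},y_{16}]$, concluding degeneration from Proposition \ref{propHodgeSSG}. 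It is correct and matches the paper's argument.
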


\begin{proof}
\indent From \cite[Section 12]{Tot}, $$O(\frak{g})^{G} \cong k[y_{4}, y_{6}, y_{8},y_{16}]$$ where $|y_{i}|=i$ in $H_{\textup{H}}^{*}(BG/k)$, viewing  $O(\frak{g})^{G}$ as a subring of $H_{\textup{H}}^{*}(BG/k).$ Consider the spectral sequence \begin{equation} \label{SSSpin9} E_{2}^{i,j}=H_{\textup{H}}^{i}(BG/k)\otimes H_{\textup{H}}^{j}((G/P)/k)\Rightarrow H_{\textup{H}}^{i+j}(BL/k) \end{equation} from Proposition \ref{Serre SS}. From Proposition \ref{cohflagvar} and Corollary \ref{cohofBL}, $$H_{\textup{H}}^{*}((G/P)/k) \cong k[e_{1},e_{2},e_{3},e_{4}]/(e_{i}^{2}=e_{2i})$$ and $$H_{\textup{H}}^{*}(BL/k) \cong k[A, c_{2}, c_{3},c_{4}].$$ Calculations similar to those performed in the proof of Proposition \ref{spin7prop} show that all differentials in \ref{SSSpin9} are zero on $e_{1}$ and $e_{3}\in E^{0,6}_{2}$ is transgressive with $0 \neq d_{7}(e_{3})=y_{7} \in H^{4}(BG, \Omega^{3}).$ We have $H^{m}_{\textup{H}}(BG/k) \cong H^{m}_{\textup{H}}(B\textup{Spin}(7)_{k}/k)$ for $m \leq 10.$

\indent We now determine the $E_{\infty}$ terms for \ref{SSSpin9}. For $n$ odd, $E^{i,n-i}_{\infty}=0$ since $H^{*}_{\textup{H}}(BL/k)$ is concentrated in even degrees. Assume that $n \in \mathbb{N}$ is even. The $k$-dimension of $H_{\textrm{H}}^{n}(BL/k)$ is equal to the cardinality of the set $$S_{n}=\{(a,b,c,d) \in \mathbb{Z}_{\geq 0} \times \mathbb{Z}_{\geq 0} \times \mathbb{Z}_{\geq 0}\times \mathbb{Z}_{\geq 0}: 2a+4b+6c+8d=n\}.$$ For $0\leq i \leq 7$, set $V_{i,n} \coloneqq H^{(n-2i)/2}(BG, \Omega^{(n-2i)/2}).$ For $0\leq i \leq 7$, $\textup{dim}_{k} V_{i,n}$ is equal to the cardinality of the set $S_{i,n}=\{(a,b,c,d) \in \mathbb{Z}_{\geq 0} \times \mathbb{Z}_{\geq 0} \times \mathbb{Z}_{\geq 0}\times \mathbb{Z}_{\geq 0}: 4a+6b+8c+16d=n-2i\}.$ As all differentials in \ref{SSSpin9} are $0$ on $e_{1} \in H^{2}_{\textup{H}}((G/P)/k),$ $$V_{i,n} \cong V_{i,n} \otimes k\cdot e_{1}^{i} \subseteq E^{n-2i,2i}_{\infty}$$ for $0\leq i \leq 7$.

\indent 
Define a bijection $f_{n}:S_{n} \to \bigcup\limits_{i=0}^{7} S_{i,n}$ by 
$f_{n}(a,b,c,d)=(b,c,d,(a-i)/8)\in S_{i,n}$ for $a \equiv i \mod(8).$ Then $$\textup{dim}_{k} H_{\textrm{H}}^{n}(BL/k)=|S_{n}|=\sum_{i=0}^{7}|S_{i,n}|.$$ As $$\textup{dim}_{k} H_{\textrm{H}}^{n}(BL/k) \geq \sum_{i=0}^{7} E^{n-2i,2i}_{\infty}$$ and $V_{i,n} \subseteq E^{n-2i,2i}_{\infty}$ for $0\leq i \leq 7$, it follows that $V_{i,n} \cong E^{n-2i,2i}_{\infty}$ for $0\leq i \leq 7$ and $E^{n-2i,2i}_{\infty}=0$ for $i \geq 8.$

\indent Let $F_{*}$ denote the cohomological spectral sequence with $E_{2}$ page given by $F_{2}=\Delta(e_{1},e_{2},e_{4})$ where $e_{i}$ has bidegree $(0,2i)$ for $i=1,2,4.$ As all differentials in the spectral sequence \ref{SSSpin9} are $0$ on $e_{1},$ there exists a map $F_{*} \to E_{*}$ of spectral sequences taking $e_{i}$ to $e_{i}$ for $i=1,2,4.$ Let $y$ be a free variable and let $H_{*}$ denote the spectral sequence with $E_{2}$ page $H_{2}=\Delta(e_{3})\otimes k[y]$ where $e_{3}$ is of bidegree $(0,6),$ $y$ is of bidegree $(7,0),$ and $e_{3}$ is transgressive with $d_{7}(e_{3}y^{i})=y^{i+1}$ for all $i.$ As $e_{3}$ is transgressive in the spectral sequence \ref{SSSpin9} with $d_{7}(e_{3})=y_{7},$ there exists a map of spectral sequences $H_{*} \to E_{*}$ taking $e_{3}$ to $e_{3}$ and $y$ to $y_{7}.$ 

\indent Elements in the ring of $G$-invariants $k[y_{4},y_{6},y_{8},y_{16}]$ are permanent cycles in the spectral sequence \ref{SSSpin9}. Tensoring maps of spectral sequences, we get a map $$\alpha: I_{*} \coloneqq F_{*} \otimes H_{*} \otimes k[y_{4}, y_{6},y_{8},y_{16}] \to E_{*}.$$ The map $\alpha$ induces an isomorphism on $E_{\infty}$ terms and on the $0$th columns of the $E_{2}$ pages. Hence, Zeeman's comparison theorem \ref{Zeeman} implies that $\alpha$ induces an isomorphism on the $0$th rows of the $E_{2}$ pages. Thus, $$H_{\textup{H}}^{*}(BG/k)\cong k[y_{4}, y_{6}, y_{7}, y_{8}, y_{16}].$$ Proposition \ref{HodgeSSG} implies that the Hodge spectral sequence for $BG$ degenerates.

\end{proof}

\begin{corollary}
Let $k$ be a field of characteristic $2$ and let $G$ be a $k$-form for $\textup{Spin}(9).$ Then $$H_{\textup{H}}^{*}(BG/k)\cong k[y_{4}, y_{6}, y_{7}, y_{8}, y_{16}]$$ where $|y_{i}|=i$ for $i=4,6,7,8,16.$
\end{corollary}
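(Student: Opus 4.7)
The plan is to mimic the proof of Corollary \ref{corollaryformcoh} essentially verbatim, since the only inputs needed are the split-form computation (Theorem \ref{theoremspin10}) and the fact that Hodge cohomology commutes with field extensions. No new geometric input is required.

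First, let $k_s$ denote a separable closure of $k$. Any $k$-form $G$ of $\textup{Spin}(9)$ becomes isomorphic to the split form after extension to $k_s$, so $G \times_k \textup{Spec}(k_s) \cong \textup{Spin}(9)_{k_s}$ as group schemes over $k_s$. Consequently the classifying stacks satisfy $BG \times_k \textup{Spec}(k_s) \cong B\textup{Spin}(9)_{k_s}$, and applying Theorem \ref{theoremspin10} over $k_s$ (which is still a field of characteristic $2$) yields
\[
H_{\textup{H}}^{*}(B\textup{Spin}(9)_{k_s}/k_s) \cong k_s[y_4', y_6', y_7', y_8', y_{16}']
\]
with $|y_i'|=i$.

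Next, I would invoke that Hodge cohomology commutes with extensions of the base field, i.e.\
\[
H_{\textup{H}}^{*}(BG/k) \otimes_k k_s \cong H_{\textup{H}}^{*}((BG \times_k \textup{Spec}(k_s))/k_s),
\]
so $H_{\textup{H}}^{*}(BG/k) \otimes_k k_s$ is a polynomial ring over $k_s$ on generators in degrees $4,6,7,8,16$. A standard faithfully flat descent argument then forces $H_{\textup{H}}^{*}(BG/k)$ itself to be a polynomial ring over $k$ on generators in the same degrees: one chooses $k$-rational classes $x_4,x_6,x_7,x_8,x_{16}$ of the appropriate degrees (possible by comparing Hilbert series, since $H_{\textup{H}}^{*}(BG/k)$ has the same graded dimensions as its base change to $k_s$), checks that the induced map $k[x_4,x_6,x_7,x_8,x_{16}] \to H_{\textup{H}}^{*}(BG/k)$ becomes an isomorphism after tensoring with $k_s$, and concludes by faithful flatness of $k \to k_s$.

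The only mild obstacle is the descent step: one needs to know that the generators can be chosen to be defined over $k$ rather than only over $k_s$. This is handled degree-by-degree by noting that in each relevant degree the $k$-vector space $H_{\textup{H}}^{n}(BG/k)$ has the same finite dimension as its base change, so any $k_s$-basis of the space of generators can be replaced by a $k$-basis. With this observation the argument goes through exactly as in Corollary \ref{corollaryformcoh}, completing the proof.
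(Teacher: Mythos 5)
Your proposal is correct and follows essentially the same route as the paper: pass to the separable closure, identify $BG\times_{k}\textup{Spec}(k_{s})$ with $B\textup{Spin}(9)_{k_{s}}$ and apply the split computation (Theorem \ref{theoremspin10}), use that Hodge cohomology commutes with extensions of the base field, and descend. In fact you make the descent step (choosing $k$-rational generators completing the decomposables, then concluding by faithful flatness and Hilbert series) more explicit than the paper, which leaves it implicit as in Corollary \ref{corollaryformcoh}.
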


\begin{remark} \label{remark} Assume that $k$ is perfect. Let $\mu_{2}$ denote the group scheme of the $2$nd roots of unity over $k.$ For $n \geq 10,$ the Hodge cohomology of $BG$ is no longer a polynomial ring. To determine the relations that hold in $H^{*}_{\textup{H}}(BG/k),$ we will restrict cohomology classes to a certain subgroup of $G$ considered in \cite[Section 12]{Tot}. Let $r= \lfloor n/2 \rfloor$ and let $T \cong \mathbb{G}_{m}^{r}$ denote a split maximal torus of $G.$ Assume that $n \not\equiv 2 \mod{4}$ so that the Weyl group $W$ of $G$ contains $-1,$ acting by inversion on $T.$ Then $-1$ acts by the identity on $T[2] \cong \mu_{2}^{r}$ and $G$ contains a subgroup $Q \cong \mu_{2}^{r} \times \mathbb{Z}/2.$ Under the double cover $G \to SO(n)_{k},$ the image of $Q$ is isomorphic to $K \cong \mu_{2}^{r-1} \times \mathbb{Z}/2$ and $Q \to K$ is a split surjection. We will need to know the Hodge cohomology rings of the classifying stacks of these groups. From \cite[Proposition 10.1]{Tot}, $$H^{*}_{\textup{H}}(B\mu_{2}/k)\cong k[t]\langle v \rangle$$ where $t \in H^{1}(B\mu_{2}, \Omega^{1})$ and $v \in H^{0}(B\mu_{2}, \Omega^{1}).$ From \cite[Lemma 10.2]{Tot}, $$H^{*}_{\textup{H}}((B\mathbb{Z}/2)/k)\cong k[s]$$ where $s \in H^{1}(B\mathbb{Z}/2, \Omega^{0}).$ The K\"{u}nneth formula \cite[Proposition 5.1]{Tot} then lets us calculate the Hodge cohomology ring of $B\mu_{2}^{i} \times B(\mathbb{Z}/2)^{j}$ for any $i,j \geq 0.$ Fix $i,j > 0$ and let $\textup{rad} \subset H^{*}_{\textup{H}}((B\mu_{2}^{i} \times B(\mathbb{Z}/2)^{j})/k)$ denote the ideal generated by nilpotent elements. Then $$H^{*}_{\textup{H}}((B\mu_{2}^{i} \times B(\mathbb{Z}/2)^{j})/k)/\textup{rad} \cong k[t_{1}, \ldots, t_{i}, s_{1}, \ldots, s_{j}]$$ where $t_{l} \in H^{1}(B\mu_{2}^{i} \times B(\mathbb{Z}/2)^{j}, \Omega^{1})$ for all $l$ and $s_{l} \in H^{1}(B\mu_{2}^{i} \times B(\mathbb{Z}/2)^{j}, \Omega^{0})$ for all $l.$

\end{remark}
\begin{theorem}
Let $n=10.$ The Hodge spectral sequence for $BG$ degenerates and $$H^{*}_{\textup{dR}}(BG/k)\cong H_{\textup{H}}^{*}(BG/k)\cong k[y_{4}, y_{6}, y_{7}, y_{8}, y_{10},y_{32}]/(y_{7}y_{10})$$ where $|y_{i}|=i$ for $i=4,6,7,8,10,32.$ 
\end{theorem}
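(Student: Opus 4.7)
The plan is to imitate the $\mathrm{Spin}(7)$, $\mathrm{Spin}(8)$, $\mathrm{Spin}(9)$ arguments essentially line for line, with one genuinely new phenomenon: the relation $y_7 y_{10} = 0$. The three standard ingredients for $n = 10$ are the $G$-invariants $O(\mathfrak{g})^{G} \cong k[y_4, y_6, y_8, y_{10}, y_{32}]$ from Totaro's Section~12, the Hodge cohomology of the Levi, $H^*_H(BL/k) \cong k[A, c_2, c_3, c_4, c_5]$ from Corollary~\ref{cohofBL} (with $r = 5$), and the Hodge cohomology of the flag variety, $H^*_H((G/P)/k) \cong k[e_1, e_2, e_3, e_4]/(e_i^2 = e_{2i})$ from Proposition~\ref{cohflagvar} (with $s = 4$). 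These feed into the spectral sequence of Proposition~\ref{Serre SS}.

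First I would rerun the low-degree analysis from the Spin(7) proof verbatim: the vanishing of $H^1(BG, \Omega^1)$ forces $e_1 \in E^{0,2}_2$ to be a permanent cycle, and the filtration on $H^6_H(BL/k)$ then forces $e_3 \in E^{0,6}_2$ to be transgressive with $0 \neq d_7(e_3) = y_7 \in H^4(BG, \Omega^3)$. This produces the candidate generators $y_4, y_6, y_7, y_8, y_{10}, y_{32}$.

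The main obstacle is establishing the relation $y_7 y_{10} = 0$. The key observation is that this product would have odd total degree $17$, while $H^*_H(BL/k)$ is concentrated in even degrees, so $E^{17,0}_\infty = 0$; the only way a nonzero class $y_7 y_{10} \in E^{17,0}_2$ could be killed is by the differential $d_7(y_{10} \otimes e_3) = y_{10} \cdot y_7$ from $E^{10, 6}_7$. My strategy is to do the dimension count in even total degree $n$ exactly as in the Spin(9) proof, with the injections $V_{i,n} \subseteq E^{n-2i,2i}_\infty$ derived from the invariants $k[y_4, y_6, y_8, y_{10}, y_{32}]$, and to show that the sum $\sum_i \dim E^{n-2i, 2i}_\infty$ already matches $\dim H^n_H(BL/k)$ only if one counts $y_{10} \otimes e_3$ as a permanent cycle contributing to $H^{16}_H(BL/k)$. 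This forces $d_7(y_{10} \otimes e_3) = 0$, hence $y_7 y_{10} = 0$ in $H^*_H(BG/k)$. The delicate Poincar\'e-series bookkeeping here — verifying that this single relation, and no others, is forced across all total degrees — is the technical heart of the argument.

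Once the ring structure is pinned down, the endgame is standard. Let $F_* = \Delta(e_1, e_2, e_4)$ carry the permanent cycles, let $H_* = \Delta(e_3) \otimes k[y]$ with $y$ in bidegree $(7,0)$ implement the transgression $d_7(e_3 y^i) = y^{i+1}$, and regard $k[y_4, y_6, y_8, y_{10}, y_{32}]/(y_7 y_{10})$ as permanent cycles on the $0$th row. The tensor product $I_* \coloneqq F_* \otimes H_* \otimes k[y_4, y_6, y_8, y_{10}, y_{32}]/(y_7 y_{10})$ admits a map $\alpha: I_* \to E_*$ that is an isomorphism on $0$th columns by construction and on $E_\infty$ by the dimension matching above. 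Theorem~\ref{Zeeman} then gives the isomorphism on $0$th rows, yielding $H^*_H(BG/k) \cong k[y_4, y_6, y_7, y_8, y_{10}, y_{32}]/(y_7 y_{10})$, and degeneration of the Hodge spectral sequence follows from Proposition~\ref{propHodgeSSG}.
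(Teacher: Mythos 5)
Your outline reproduces the paper's setup correctly (same invariants, same Levi, same flag variety, same low-degree analysis forcing $e_{1}$ to be a permanent cycle and $e_{3}$ to transgress to $y_{7}$, and the same Zeeman endgame), but the step where you establish the relation has a genuine gap. The dimension count on $H^{16}_{\textup{H}}(BL/k)$ only shows that \emph{some} nonzero class $z\otimes e_{3}$ with $z\in H^{10}_{\textup{H}}(BG/k)$ survives to $E^{10,6}_{\infty}$, i.e.\ that $y_{7}z=0$ for some $0\neq z=ay_{4}y_{6}+by_{10}$; it does not force $d_{7}(y_{10}\otimes e_{3})=0$ specifically, as you assert. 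Since $H^{10}_{\textup{H}}(BG/k)$ is two-dimensional, the count alone cannot distinguish $y_{7}y_{10}=0$ from $y_{7}y_{4}y_{6}=0$ (or $y_{7}(y_{4}y_{6}+y_{10})=0$); in the case $b\neq 0$ a change of variables rescues the stated presentation, but if $a\neq 0$ and $b=0$ the quotient $k[y_{4},y_{6},y_{7},y_{8},y_{10},y_{32}]/(y_{4}y_{6}y_{7})$ is a genuinely different graded ring, so the theorem as stated would fail. Ruling this out is the real new content for $n=10$ and is missing from your proposal.

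The paper closes this gap by a restriction argument that your sketch never touches: after reducing to $k=\mathbb{F}_{2}$ (so Remark \ref{remark} applies), it pulls the relation back to $H^{*}_{\textup{H}}(B\textup{Spin}(8)_{k}/k)\cong k[y_{4},y_{6},y_{7},y_{8},y_{8}']$, where $y_{10}\mapsto 0$ and $y_{4},y_{6}$ map to $y_{4},y_{6}$. One must then show $y_{7}$ restricts to a nonzero class; this is done by restricting $u_{7}\in H^{7}_{\textup{H}}(BSO(8)_{k}/k)$ through the diagonalizable subgroups $H\cong\mu_{2}^{4}\times(\mathbb{Z}/2)^{3}\subset SO(8)_{k}$ and $K$, using Totaro's explicit formula for the pullback of $u_{7}$ to $H^{*}_{\textup{H}}(BH/k)/\textup{rad}$ and the split surjection $Q\to K$ of Remark \ref{remark}, yielding a nonzero image and hence $a=0$, so $y_{7}y_{10}=0$. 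A secondary, more formal point: the relation involves the bidegree-$(7,0)$ class $y$, not an element of $k[y_{4},y_{6},y_{8},y_{10},y_{32}]$, so your model $I_{*}$ cannot be formed by quotienting the invariant ring alone; the paper instead quotients the whole $E_{2}$-page, setting $J_{2}=I_{2}/(yy_{10})$ with differentials vanishing above $d_{7}$, and verifies the isomorphism on $E_{\infty}$ using the extra summand $F_{2}\otimes e_{3}\otimes y_{10}k[y_{4},y_{6},y_{8},y_{10},y_{32}]$ together with the identity $\dim_{k}H^{n}_{\textup{H}}(BL/k)=\sum_{i=0}^{15}\dim_{k}V_{i,n}$ before invoking Theorem \ref{Zeeman}.
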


\begin{proof}
By the K\"{u}nneth formula \cite[Proposition 5.1]{Tot}, we may assume that $k=\mathbb{F}_{2}$ so that Remark \ref{remark} applies. From \cite[Section 12]{Tot}, $$O(\frak{g})^{G} \cong k[y_{4}, y_{6}, y_{8},y_{10},y_{32}]$$ where $|y_{i}|=i$ in $H_{\textup{H}}^{*}(BG/k)$, viewing  $O(\frak{g})^{G}$ as a subring of $H_{\textup{H}}^{*}(BG/k).$ Consider the spectral sequence \begin{equation} \label{SSSpin10} E_{2}^{i,j}=H_{\textup{H}}^{i}(BG/k)\otimes H_{\textup{H}}^{j}((G/P)/k)\Rightarrow H_{\textup{H}}^{i+j}(BL/k) \end{equation} from Proposition \ref{Serre SS}. From Proposition \ref{cohflagvar} and Corollary \ref{cohofBL}, $$H_{\textup{H}}^{*}((G/P)/k) \cong k[e_{1},e_{2},e_{3},e_{4}]/(e_{i}^{2}=e_{2i})$$ and $$H_{\textup{H}}^{*}(BL/k) \cong k[A, c_{2}, c_{3},c_{4},c_{5}].$$ Calculations similar to those performed in the proof of Proposition \ref{spin7prop} show that all differentials in \ref{SSSpin10} are zero on $e_{1}$ and $e_{3}\in E^{0,6}_{2}$ is transgressive with $0 \neq d_{7}(e_{3})=y_{7} \in H^{4}(BG, \Omega^{3}).$ We have $H^{m}_{\textup{H}}(BG/k) \cong H^{m}_{\textup{H}}(B\textup{Spin}(9)_{k}/k)$ for $m < 10.$

\indent Let $F_{*}$ be the spectral sequence with $E_{2}$ page given by $F_{2}=\Delta(e_{1},e_{2},e_{4})$ where $e_{i}$ has bidegree $(0,2i)$ for all $i.$ As all differentials in the spectral sequence \ref{SSSpin10} are $0$ on $e_{1},$ there exists a map of spectral sequence $F_{*} \to E_{*}$ taking $e_{i}$ to $e_{i}$ for $i=1,2,4.$ Fix a variable $y.$ Let $H_{*}$ denote the spectral sequence with $E_{2}$ page $H_{2}=\Delta(e_{3}) \otimes k[y]$ where $e_{3}$ has bidegree $(0,6),$ $y$ has bidegree $(7,0),$ and $e_{3}$ is transgressive with $d_{7}(e_{3}y^{i})=y^{i+1}$ for all $i.$ As $e_{3}$ is transgessive in \ref{SSSpin10} with $d_{7}(e_{3})=y_{7},$ there exists a map of spectral sequences $H_{*} \to E_{*}$ taking $e_{3}$ to $e_{3}$ and $y$ to $y_{7}.$ Elements in the ring of $G$-invariants $k[y_{4}, y_{6}, y_{8},y_{10},y_{32}]$ are permanent cycles in \ref{SSSpin10}. Tensoring maps of spectral sequences, we get a map \begin{equation} \label{alphaspin10}\alpha:I_{*} \coloneqq F_{*} \otimes H_{*} \otimes k[y_{4}, y_{6}, y_{8},y_{10},y_{32}] \to E_{*} \end{equation} which induces an isomorphism on the $0$th columns of the $E_{2}$ pages.

\indent Let $n$ be even. The $k$-dimension of $H_{\textrm{H}}^{n}(BL/k)$ is equal to the cardinality of the set $$S_{n}=\{(a,b,c,d,e) \in \mathbb{Z}_{\geq 0}^{5}: 2a+4b+6c+8d+10e=n\}.$$ For $0\leq i \leq 15$, set $V_{i,n} \coloneqq H^{(n-2i)/2}(BG, \Omega^{(n-2i)/2}).$ For $0\leq i \leq 15$, $\textup{dim}_{k} V_{i,n}$ is equal to the cardinality of the set $S_{i,n}=\{(a,b,c,d,e) \in \mathbb{Z}_{\geq 0}^{5}: 4a+6b+8c+10d+32e=n-2i\}.$ As all differentials in \ref{SSSpin10} are $0$ on $e_{1} \in H^{2}_{\textup{H}}((G/P)/k),$ $$V_{i,n} \cong V_{i,n} \otimes k\cdot e_{1}^{i} \subseteq E^{n-2i,2i}_{\infty}$$ for $0\leq i \leq 7$. Hence, the map $\alpha$ \ref{alphaspin10} induces injections on all $E_{\infty}$ terms. For $n$ odd, $\alpha$ induces isomorphisms $0=I^{n-i,i}_{\infty} \cong E^{n-i,i}_{\infty}=0$ for all $i$ since $H^{*}_{\textup{H}}(BL/k)$ is concentrated in even degrees.

\indent Define a bijection $f_{n}:S_{n} \to \bigcup\limits_{i=0}^{15} S_{i,n}$ by 
$f_{n}(a,b,c,d,e)=(b,c,d,e,(a-i)/16)\in S_{i,n}$ for $a \equiv i \mod(16).$ Then \begin{equation} \label{dim of BL} \textup{dim}_{k} H_{\textrm{H}}^{n}(BL/k)=|S_{n}|=\sum_{i=0}^{15}|S_{i,n}|=\sum_{i=0}^{15}\textup{dim}_{k}V_{i,n}.\end{equation} Now assume that $n \leq 14.$ Then $f_{n}$ gives a bijection $$S_{n} \to \bigcup\limits_{i=0}^{7}S_{i,n}.$$ As $$\textup{dim}_{k} H_{\textrm{H}}^{n}(BL/k) \geq \sum_{i=0}^{7} E^{n-2i,2i}_{\infty}$$ and $V_{i,n} \subseteq E^{n-2i,2i}_{\infty}$ for $0\leq i \leq 7$, it follows that $V_{i,n} \cong E^{n-2i,2i}_{\infty}$ for $0\leq i \leq 7$ and $E^{n-2i,2i}_{\infty}=0$ for $i \geq 8.$ As $\alpha$ induces injections on all $E_{\infty}$ terms, Theorem \ref{Zeeman} implies that $\alpha$ \ref{alphaspin10} induces an isomorphism $I_{2}^{n,0} \to E_{2}^{n,0}$ for $n<16.$

\indent Now we consider the filtration on $H_{\textrm{H}}^{16}(BL/k)$ given by \ref{SSSpin10}. From the bijection $f_{16}$ defined in the previous paragraph, we have $$\textup{dim}_{k} H_{\textrm{H}}^{16}(BL/k)=1+\sum_{i=0}^{7}|S_{i,n}|=1+\sum_{i=0}^{7}\textup{dim}_{k} V_{i,n} \otimes k\cdot e_{1}^{i}.$$ As all differentials are $0$ on $e_{1}$ and $\alpha$ induces isomorphisms on $0$th row terms of the $E_{2}$ pages in degrees less than $16,$ we must then have $$E_{\infty}^{10,6} \cong (H^{10}_{\textup{H}}(BG/k) \otimes k \cdot e_{1}^{3}) \oplus (k \cdot z \otimes k \cdot e_{3})$$ for some $0 \neq z \in H^{10}_{\textup{H}}(BG/k).$ Hence, $y_{7}z=0$ in $H^{*}_{\textup{H}}(BG/k).$ Write $z=ay_{4}y_{6}+by_{10}$ for some $a, b \in k.$

\indent We now show that $a=0$ by restricting $y_{7}z=0$ to the Hodge cohomology of the classifying stack of the subgroup $\textup{Spin}(8)_{k}$ of $G.$ Under the isomorphism $$H^{*}_{\textup{H}}(B\textup{Spin}(8)_{k}/k) \cong k[y_{4},y_{6},y_{7},y_{8},y_{16}]$$ of Theorem \ref{theoremspin10}, pullback from $H^{*}_{\textup{H}}(BG/k)$ to $H^{*}_{\textup{H}}(B\textup{Spin}(8)_{k}/k)$ maps $y_{4} ,y_{6}, y_{10} \in H^{*}_{\textup{H}}(BG/k)$ to $y_{4},y_{6},$ and $0$ respectively in $H^{*}_{\textup{H}}(B\textup{Spin}(8)_{k}/k).$ Hence, to show that $a=0,$ it suffices to show that $y_{7} \in H^{*}_{\textup{H}}(BG/k)$ restricts to $y_{7} \in H^{*}_{\textup{H}}(B\textup{Spin}(8)_{k}/k).$ From the isomorphism $$H^{*}_{\textup{H}}(BSO(m)_{k}/k)\cong k[u_{2}, \ldots, u_{m}]$$ of \cite[Theorem 11.1]{Tot} for $m\geq0,$ the class $u_{7} \in H^{7}_{\textup{H}}(BSO(10)_{k}/k)$ restricts to $u_{7} \in H^{7}_{\textup{H}}(BSO(8)_{k}/k).$ Thus, we are reduced to showing that $u_{7} \in H^{7}_{\textup{H}}(BSO(8)_{k}/k)$ pulls back to a non-zero multiple of $y_{7} \in H^{*}_{\textup{H}}(B\textup{Spin}(8)_{k}/k).$ 

\indent Consider the subgroups $\mu_{2}^{4} \times \mathbb{Z}/2 \cong Q \subseteq \textup{Spin}(8)_{k}$ and $\mu_{2}^{3} \times  \mathbb{Z}/2 \cong K \subseteq SO(8)_{k}$ defined in Remark \ref{remark}. As the morphism $Q \to K$ is split surjective, if we can show that $u_{7}$ restricts to a nonzero class in $H^{*}_{\textup{H}}(BK/k),$ then $u_{7}$ would restrict to a nonzero class in $H^{7}_{\textup{H}}(B\textup{Spin}(8)_{k}/k).$ From the inclusion $O(2)^{4}_{k} \subset O(8)_{k},$ $O(8)_{k}$ contains a subgroup of the form $\mu_{2}^{4} \times (\mathbb{Z}/2)^{4}.$ As $SO(8)_{k}$ is the kernel of the Dickson determinant $O(8)_{k}\to \mathbb{Z}/2,$ it follows that $SO(8)_{k}$ contains a subgroup $H\cong \mu_{2}^{4} \times (\mathbb{Z}/2)^{3}.$ Write $$H^{*}_{\textup{H}}(BH/k)/\textup{rad}\cong k[t_{1}, \ldots, t_{4}, s_{1}, \ldots, s_{4}]/(s_{1}+s_{2}+s_{3}+s_{4})$$ using Remark \ref{remark}. From the proof of \cite[Lemma 11.4]{Tot}, the pullback of $u_{7}$ to $H^{*}_{\textup{H}}(BH/k)/\textup{rad}$ followed by pullback to $H^{*}_{\textup{H}}(BK/k)/\textup{rad} \cong k[t_{1}, \ldots, t_{4},s]/(t_{1}+\cdots+t_{4})$ is given by

$$ u_{7} \mapsto \sum_{j=1}^{3} s_{j}(t_{j}+t_{4}) \sum_{\substack{1 \leq i_{1} < i_{2} \leq 3 \\ i_{1}, i_{2} \neq j}} t_{i_{1}}t_{i_{2}} \mapsto \sum_{j=1}^{3} s(t_{j}+t_{4}) \sum_{\substack{1 \leq i_{1} < i_{2} \leq 3 \\ i_{1}, i_{2} \neq j}} t_{i_{1}}t_{i_{2}}$$
\\
$$=s\sum_{1 \leq i_{1}<i_{2}\leq 3} (t_{i_{1}}+t_{i_{2}})t_{i_{1}}t_{i_{2}} \neq 0.$$
 Thus, $u_{7} \in H^{7}_{\textup{H}}(BSO(8)_{k}/k)$ pulls back to a nonzero multiple of $y_{7} \in H^{7}_{\textup{H}}(B\textup{Spin}(8)_{k}/k)$ which implies that $y_{7}y_{10}=0$ in $H^{*}_{\textup{H}}(BG/k).$

\[
\begin{tikzcd}
u_{7} \in H^{7}_{\textup{H}}(BSO(10)_{k}/k) \arrow[d,""] \arrow[r, ""] & y_{7} \in H^{7}_{\textup{H}}(BG/k) \arrow[d,""] \\
u_{7} \in H^{7}_{\textup{H}}(BSO(8)_{k}/k) \arrow[d,""] \arrow[r, ""] & y_{7} \in H^{7}_{\textup{H}}(B\textup{Spin}(8)_{k}/k) \arrow[d,""] \\
\sum_{j=1}^{3} s(t_{j}+t_{4}) \sum_{\substack{1 \leq i_{1} < i_{2} \leq 3 \\ i_{1}, i_{2} \neq j}} t_{i_{1}}t_{i_{2}} \in H^{7}_{\textup{H}}(BK/k) \arrow[r,"\neq 0"] &  H^{7}_{\textup{H}}(BQ/k)
\end{tikzcd}
\]

\indent Using the relation $y_{7}y_{10}=0,$ we now modify the spectral sequence $I_{*}$ defined above to define a new spectral sequence $J_{*}$ that better approximates (and will actually be isomorphic to) the spectral sequence \ref{SSSpin10}. Let $$(yy_{10}) \coloneqq F_{2} \otimes (\Delta(e_{3})\otimes yk[y])\otimes y_{10}k[y_{4},y_{6},y_{8},y_{10},y_{32}].$$ Define the $E_{2}$ page of $J_{*}$ by $J_{2}=I_{2}/(yy_{10})$. Define the differentials $d'_{m}$ of $J_{*}$ so that $I_{2} \to J_{2}$ induces a map $I_{*} \to J_{*}$ of cohomological spectral sequences of $k$-vector spaces and $d'_{m}=0$ for $m>7.$ This means that $d_{7}'(f \otimes e_{3} \otimes y_{10}g)= f \otimes y \otimes y_{10}g=0$ and $d_{m}'(f \otimes e_{3} \otimes y_{10}g)=0$ for $m>7$, $f \in F_{2},$ and $g \in k[y_{4},y_{6},y_{8},y_{10},y_{32}].$ The $E_{\infty}$ page of $J_{*}$ is given by $$J_{\infty} \cong (F_{2} \otimes k[y_{4},y_{6},y_{8},y_{10},y_{32}]) \oplus (F_{2} \otimes e_{3} \otimes y_{10}k[y_{4},y_{6},y_{8},y_{10},y_{32}]).$$

 \indent As $y_{7}y_{10}=0$ in $H^{*}_{\textup{H}}(BG/k)$, $\alpha$ induces a map $\alpha ':J_{*} \to E_{*}$ of spectral sequences. To finish the calculation, we will show that $\alpha '$ induces an isomorphism on $E_{\infty}$ terms so that Theorem \ref{Zeeman} will apply. For $n$ odd, $E^{n-i,i}_{\infty}=0$ for all $i$ since $H^{*}_{\textup{H}}(BL/k)$ is concentrated in even degrees. Now assume that $n$ is even. For $0 \leq i \leq 7,$ $$V_{i,n} \cong H^{(n-2i)/2}(BG, \Omega^{(n-2i)/2}) \otimes e_{1}^{i} \subseteq E^{n-2i,2i}_{\infty}.$$ For $8 \leq i \leq 15$, $$V_{i,n}\cong y_{10}H^{(n-2i)/2}(BG, \Omega^{(n-2i)/2}) \otimes e_{1}^{i-8}e_{3} \subseteq E^{n-2i+10,2i-10}_{\infty}.$$ Hence, from the description of the $E_{\infty}$ terms of $J_{*}$ given above, it follows that $\alpha '$ induces an injection $J_{\infty}^{n-2i,2i} \to E_{\infty}^{n-2i,2i}$ for all $i.$ Equation \ref{dim of BL} then implies that $J_{\infty}^{n-2i,2i} \cong E_{\infty}^{n-2i,2i}$ for all $i.$ 
 
 \indent Thus, $\alpha '$ induces an isomorphism on $E_{\infty}$ pages and an isomorphism on the $0$th columns of the $E_{2}$ pages of the $2$ spectral sequences. Theorem \ref{Zeeman} then implies that $$H_{\textup{H}}^{*}(BG/k)\cong k[y_{4}, y_{6}, y_{7}, y_{8}, y_{10},y_{32}]/(y_{7}y_{10}).$$ From Proposition \ref{HodgeSSG}, the Hodge spectral sequence for $BG$ degenerates.

\end{proof}

\begin{corollary}
Let $G$ be a $k$-form of $\textup{Spin}(10).$ Then $$H^{*}_{\textup{H}}(BG/k)\cong k[y_{4},y_{6},y_{7},y_{8},y_{10},y_{32}]/(y_{7}y_{10})$$ where $|y_{i}|=i$ for all $i.$ 
\end{corollary}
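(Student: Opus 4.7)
The strategy is the same as for the $k$-forms of $G_{2}$ and $\textup{Spin}(n)$ with $n\in\{7,8,9\}$ treated earlier: descend from the split case via the faithfully flat extension $k\to k_{s}$, where $k_{s}$ is a separable closure. By the general fact that every form of a split reductive group becomes split over $k_{s}$, we have $BG\times_{k}\textup{Spec}(k_{s})\cong B\textup{Spin}(10)_{k_{s}}$, and since Hodge cohomology commutes with extensions of the base field, the preceding theorem yields
$$H^{*}_{\textup{H}}(BG/k)\otimes_{k}k_{s}\;\cong\;H^{*}_{\textup{H}}(B\textup{Spin}(10)_{k_{s}}/k_{s})\;\cong\;k_{s}[y'_{4},y'_{6},y'_{7},y'_{8},y'_{10},y'_{32}]/(y'_{7}y'_{10}).$$

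Next, I would pick generators in $H^{*}_{\textup{H}}(BG/k)$. For each $i\in\{4,6,7,8,10,32\}$, the specific bigraded piece of $H^{*}_{\textup{H}}(B\textup{Spin}(10)_{k_{s}}/k_{s})$ that contains $y'_{i}$ is one-dimensional; by the base change isomorphism above, the corresponding bigraded piece of $H^{*}_{\textup{H}}(BG/k)$ is also one-dimensional over $k$, so I may choose a nonzero class $y_{i}$ there. Since the base change map $H^{*}_{\textup{H}}(BG/k)\to H^{*}_{\textup{H}}(BG/k)\otimes_{k}k_{s}$ is injective (faithful flatness of $k\to k_{s}$), and the image of $y_{7}y_{10}$ is a nonzero scalar multiple of $y'_{7}y'_{10}=0$, the relation $y_{7}y_{10}=0$ already holds in $H^{*}_{\textup{H}}(BG/k)$. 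This produces a graded $k$-algebra homomorphism
$$\varphi:k[y_{4},y_{6},y_{7},y_{8},y_{10},y_{32}]/(y_{7}y_{10})\;\longrightarrow\;H^{*}_{\textup{H}}(BG/k).$$

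Finally, I would check that $\varphi$ is an isomorphism by base-changing to $k_{s}$: the map $\varphi\otimes_{k}k_{s}$ sends each polynomial generator to a nonzero scalar multiple of the corresponding split-form generator, so it factors through the theorem's isomorphism composed with a diagonal rescaling of variables, and is therefore an isomorphism of graded $k_{s}$-algebras. By faithful flatness of $k\to k_{s}$, $\varphi$ itself is an isomorphism. There is no substantive obstacle: the whole argument is a routine descent, and the only nontrivial input is the computation over the split form established in the preceding theorem, together with the faithful flatness of $k\to k_{s}$ which supplies both the injectivity needed for the relation and the final descent of the isomorphism.
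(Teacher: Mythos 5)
Your overall strategy (pass to $k_{s}$, where $BG$ becomes the split $\textup{Spin}(10)$, and use that Hodge cohomology commutes with field extensions) is exactly the route the paper intends: the paper states this corollary with no further argument, following the template of its $G_{2}$-form corollary. However, there is a genuine gap in your execution. The claim that for each $i\in\{4,6,7,8,10,32\}$ the bigraded piece containing $y'_{i}$ is one-dimensional is false for $i=8,10,32$: in $H^{*}_{\textup{H}}(B\textup{Spin}(10)_{k_s}/k_s)$ the class $y'_{8}$ lies in $H^{4}(\Omega^{4})$ together with $(y'_{4})^{2}$, the class $y'_{10}$ lies in $H^{5}(\Omega^{5})$ together with $y'_{4}y'_{6}$, and the diagonal piece of total degree $32$ containing $y'_{32}$ contains many monomials in $y'_{4},y'_{6},y'_{8},y'_{10}$. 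This breaks two of your steps. First, an arbitrary nonzero class $y_{10}$ in the relevant piece of $H^{*}_{\textup{H}}(BG/k)$ base-changes to $\alpha y'_{10}+\beta y'_{4}y'_{6}$, so $y_{7}y_{10}$ maps to $\beta\, y'_{4}y'_{6}y'_{7}$, which is nonzero whenever $\beta\neq 0$; hence the relation $y_{7}y_{10}=0$, and with it the very definition of $\varphi$, is not justified for your choice of generators. Second, arbitrary nonzero choices of $y_{8},y_{10},y_{32}$ could be decomposable (for instance $y_{4}^{2}$, $y_{4}y_{6}$, $y_{4}^{8}$), in which case $\varphi\otimes_{k}k_{s}$ is not surjective; and even for good choices the images need not be scalar multiples of the split generators, so the concluding ``diagonal rescaling'' argument does not apply as stated.

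The gap is fixable with more careful choices. Take $y_{4},y_{6},y_{7}$ as you do (those pieces really are one-dimensional). For $y_{10}$, take a nonzero element of the annihilator of $y_{7}$ in total degree $10$: kernels commute with the flat extension $k\to k_{s}$, and the annihilator of $y'_{7}$ in degree $10$ is exactly $k_{s}\cdot y'_{10}$, so this annihilator is a one-dimensional $k$-subspace consisting of indecomposables, and $y_{7}y_{10}=0$ holds by construction. For $y_{8}$ and $y_{32}$, lift bases of the degree-$8$ and degree-$32$ pieces of the indecomposables $A^{+}/(A^{+})^{2}$ (which also commute with base change and are one-dimensional there). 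Then $\varphi$ is defined, $\varphi\otimes_{k}k_{s}$ is surjective by graded Nakayama (the generators hit generators modulo decomposables), and it is an isomorphism because source and target have the same Hilbert series, namely that of $k_{s}[y_{4},y_{6},y_{7},y_{8},y_{10},y_{32}]/(y_{7}y_{10})$; faithful flatness of $k\to k_{s}$ then descends the isomorphism to $k$. With these modifications your argument matches what the paper leaves implicit.
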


\begin{theorem}
 Let $n=11.$ The Hodge spectral sequence for $BG$ degenerates and $$H^{*}_{\textup{dR}}(BG/k)\cong H_{\textup{H}}^{*}(BG/k)\cong k[y_{4}, y_{6}, y_{7}, y_{8}, y_{10},y_{11},y_{32}]/(y_{7}y_{10}+y_{6}y_{11})$$ where $|y_{i}|=i$ for $i=4,6,7,8,10,11,32.$ 
\end{theorem}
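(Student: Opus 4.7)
The plan is to parallel the strategy used for $n=7,8,9,10$, with modifications to handle both the new transgression from $e_{5}$ and the nontrivial relation $y_{7}y_{10}+y_{6}y_{11}=0$. I analyse the spectral sequence of Proposition \ref{Serre SS}:
$$E_{2}^{i,j}=H_{\textup{H}}^{i}(BG/k)\otimes H_{\textup{H}}^{j}((G/P)/k)\Rightarrow H_{\textup{H}}^{i+j}(BL/k),$$
where now $H_{\textup{H}}^{*}((G/P)/k)\cong k[e_{1},\ldots,e_{5}]/(e_{i}^{2}=e_{2i})$ by Proposition \ref{cohflagvar}, $H_{\textup{H}}^{*}(BL/k)\cong k[A,c_{2},c_{3},c_{4},c_{5}]$ by Corollary \ref{cohofBL}, and $O(\mathfrak{g})^{G}\cong k[y_{4},y_{6},y_{8},y_{10},y_{32}]$ from \cite[Section 12]{Tot}. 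The same dimension-counting arguments used in the Spin$(7)$ proof show that all differentials vanish on $e_{1}$ and that $e_{3}\in E_{2}^{0,6}$ is transgressive with $d_{7}(e_{3})=y_{7}\in H^{4}(BG,\Omega^{3})$; moreover $H_{\textup{H}}^{m}(BG/k)\cong H_{\textup{H}}^{m}(B\textup{Spin}(10)_{k}/k)$ for $m<11$.

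Next I would identify the new generator $y_{11}$. Since the only potentially nontrivial intermediate targets of $d_{r}(e_{5})$ for $r\leq 10$ lie in columns where the previously-determined part of $H_{\textup{H}}^{*}(BG/k)$ contributes nothing that is not already in the image of $e_{3}$'s transgression, I expect $e_{5}\in E_{2}^{0,10}$ to survive to the $E_{11}$ page and then satisfy $d_{11}(e_{5})=y_{11}$ for some nonzero class $y_{11}\in H^{6}(BG,\Omega^{5})$.

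To pin down the relation, I examine the filtration on $H_{\textup{H}}^{16}(BL/k)$, as in the Spin$(10)$ proof. Applying Leibniz gives $d_{7}(y_{10}e_{3})=y_{7}y_{10}$ and $d_{11}(y_{6}e_{5})=y_{6}y_{11}$, both landing in $E_{r}^{17,0}$. Dimension counting, together with the injectivity of the natural map $\oplus_{i} H^{i}(BG,\Omega^{i})\to \oplus_{i} H_{\textup{H}}^{2i}(BG/k)$, forces exactly one relation of the shape $ay_{7}y_{10}+by_{6}y_{11}=0$ in $H_{\textup{H}}^{*}(BG/k)$. To fix $a=b=1$, I would pull $y_{7}y_{10}$ and $y_{6}y_{11}$ back along the inclusion of a subgroup $Q\cong\mu_{2}^{5}\times\mathbb{Z}/2$ of $G$ as constructed in Remark \ref{remark}, further passing to a split surjection $Q\to K\subset SO(11)_{k}$ with $K\cong\mu_{2}^{4}\times\mathbb{Z}/2$, and compute both products explicitly inside $H_{\textup{H}}^{*}(BH/k)/\textup{rad}$ for an elementary abelian $H\subset SO(11)_{k}$, using the formulas of \cite[Lemma 11.4]{Tot} extended to degree $17$.

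With the relation in hand, I would construct a model spectral sequence by taking $F_{*}$ with $F_{2}=\Delta(e_{1},e_{2},e_{4})$, a transgressive spectral sequence $H_{*}^{(1)}=\Delta(e_{3})\otimes k[y]$ with $d_{7}(e_{3}y^{i})=y^{i+1}$, an analogous $H_{*}^{(2)}=\Delta(e_{5})\otimes k[y']$ with $d_{11}(e_{5}(y')^{i})=(y')^{i+1}$, set
$$I_{*}\coloneqq F_{*}\otimes H_{*}^{(1)}\otimes H_{*}^{(2)}\otimes k[y_{4},y_{6},y_{8},y_{10},y_{32}],$$
and then quotient by the two-sided relation $yy_{10}+y_{6}y'=0$ (together with everything this forces) to obtain $J_{*}$ and a morphism $\alpha\colon J_{*}\to E_{*}$. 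A bijection between weighted compositions, as in the Spin$(10)$ argument but now over the alphabet $\{2,4,6,8,10\}$ converging to $H_{\textup{H}}^{*}(BL/k)$, shows that $\alpha$ is an isomorphism on every $E_{\infty}^{i,j}$ and tautologically on the $0$th column of the $E_{2}$ pages, and Theorem \ref{Zeeman} then transfers this to the $0$th row, yielding the claimed presentation. Degeneration of the Hodge spectral sequence follows immediately from Proposition \ref{propHodgeSSG}.

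The hard part will be the identification of the relation. Specifically, verifying that $d_{r}(e_{5})=0$ for $7\leq r\leq 10$ (so that $e_{5}$ is honestly transgressive with target in $H^{11}(BG)$) and then carrying out the subgroup restriction that distinguishes the combined relation $y_{7}y_{10}+y_{6}y_{11}=0$ from the stronger pair $y_{7}y_{10}=0$, $y_{6}y_{11}=0$, are the two computations where the Spin$(11)$ case genuinely diverges from its predecessors.
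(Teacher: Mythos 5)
Your proposal follows essentially the same route as the paper: the same parabolic/Levi spectral sequence with $H^{*}_{\textup{H}}((G/P)/k)\cong k[e_{1},\ldots,e_{5}]/(e_{i}^{2}=e_{2i})$, the same transgressions $d_{7}(e_{3})=y_{7}$ and $d_{11}(e_{5})=y_{11}$, the same restriction through the subgroups $Q\to K$ of Remark \ref{remark} to pin down the relation, the same quotiented model spectral sequence compared via Theorem \ref{Zeeman}, and degeneration via Proposition \ref{propHodgeSSG}. The two steps you flag as hard are handled in the paper just as you anticipate: $e_{5}$ is made transgressive by adding a $k$-multiple of $e_{1}^{2}e_{3}$ (the remaining intermediate targets vanish because $H^{m}_{\textup{H}}(BG/k)=0$ for $m=3,5,9$), and the restriction to $K$ also rules out the a priori possible third term $y_{7}y_{4}y_{6}$ in the degree-$17$ relation, forcing exactly $y_{7}y_{10}+y_{6}y_{11}=0$.
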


\begin{proof}

\indent We may assume that $k=\mathbb{F}_{2}$ so that Remark \ref{remark} applies. From \cite[Section 12]{Tot}, $$O(\frak{g})^{G} \cong k[y_{4}, y_{6}, y_{8},y_{10},y_{32}]$$ where $|y_{i}|=i$ in $H_{\textup{H}}^{*}(BG/k)$, viewing  $O(\frak{g})^{G}$ as a subring of $H_{\textup{H}}^{*}(BG/k).$ Consider the spectral sequence \begin{equation} \label{SSSpin11} E_{2}^{i,j}=H_{\textup{H}}^{i}(BG/k)\otimes H_{\textup{H}}^{j}((G/P)/k)\Rightarrow H_{\textup{H}}^{i+j}(BL/k) \end{equation} from Proposition \ref{Serre SS}. From Proposition \ref{cohflagvar} and Corollary \ref{cohofBL}, $$H_{\textup{H}}^{*}((G/P)/k) \cong k[e_{1},e_{2},e_{3},e_{4},e_{5}]/(e_{i}^{2}=e_{2i})$$ and $$H_{\textup{H}}^{*}(BL/k) \cong k[A, c_{2}, c_{3},c_{4},c_{5}].$$ 
\indent Using \cite[Theorem 11.1]{Tot}, write $H^{*}_{\textup{H}}(BSO(11)_{k}/k)\cong k[u_{2}, \ldots, u_{11}].$ From the inclusions $O(2)^{5}_{k} \subset O(10)_{k} \subset SO(11)_{k}$, $SO(11)_{k}$ contains a subgroup $H \cong \mu_{2}^{5} \times (\mathbb{Z}/2)^{5}.$ Write $H^{*}_{\textup{H}}(BH/k)/\textup{rad} \cong k[t_{1},\ldots,t_{5},s_{1},\ldots,s_{5}]$ as described in Remark \ref{remark}. Under the pullback map $H^{*}_{\textup{H}}(BSO(11)_{k}/k) \to H^{*}_{\textup{H}}(BH/k)/\textup{rad},$   $u_{2m}$ pulls back to the $m$th elementary symmetric polynomial $$\sum_{1 \leq i_{1}<\cdots<i_{m} \leq 5}t_{i_{1}}\cdots t_{i_{m}}$$ and $u_{2m+1}$ pulls back to $$\sum_{j=1}^{5} s_{j}\sum_{\substack{1 \leq i_{1}<\cdots<i_{m} \leq 5\\ \textup{one equal to j}}}t_{i_{1}}\cdots t_{i_{m}}$$ for $1 \leq m \leq 5$ \cite[Lemma 11.4]{Tot}.

\indent Let $Q\cong (\mu_{2}^{5} \times \mathbb{Z}/2) \subset G$ and $K \cong (\mu_{2}^{4} \times \mathbb{Z}/2) \subset SO(11)_{k}$ be the subgroups described in Remark \ref{remark}. Write $H^{*}_{\textup{H}}(BK/k)/\textup{rad} \cong k[t_{1},\ldots,t_{5},s]/(t_{1}+\cdots+t_{5}).$ Under the pullback map $H^{*}_{\textup{H}}(BSO(11)_{k}/k) \to H^{*}_{\textup{H}}(BK/k)/\textup{rad},$ $u_{7}$ maps to $su_{6} \neq 0$ and $u_{11}$ maps to $su_{10}\neq 0.$ As $Q \to K$ is split, it follows that $u_{7},u_{11}$ restrict to nonzero classes $y_{7} \in H^{7}_{\textup{H}}(BG/k)$ and $y_{11} \in H^{11}_{\textup{H}}(BG/k).$ Also, $y_{4}y_{7}$ and $y_{11}$ are linearly independent in $H^{11}_{\textup{H}}(BG/k).$

\indent Returning to the spectral sequence \ref{SSSpin11}, calculations similar to those performed in the proof of Proposition \ref{spin7prop} show that all differentials in \ref{SSSpin11} are zero on $e_{1}$ and $e_{3}\in E^{0,6}_{2}$ is transgressive with $0 \neq d_{7}(e_{3})=y_{7} \in H^{4}(BG, \Omega^{3}).$ We have $H^{m}_{\textup{H}}(BG/k) \cong H^{m}_{\textup{H}}(B\textup{Spin}(10)_{k}/k)$ for $m \leq 10.$

\indent Let $F_{*}$ be the spectral sequence with $E_{2}$ page given by $\Delta(e_{1},e_{2},e_{4})$ with $e_{i}$ of bidegree $(0,2i)$ for $i=1,2,4.$ Fix a variable $y$ and let $H_{*}$ be the spectral sequence with $H_{2}=\Delta(e_{3}) \otimes k[y]$ where $e_{3}$ is of bidegree $(0,6),$ $y$ is of bidegree $(7,0),$ and $e_{3}$ is transgressive with $d_{7}(e_{3}y^{i})=y^{i+1}$ for all $i.$ There exists a map of spectral sequence $$\alpha:I_{*}\coloneqq F_{*} \otimes H_{*} \otimes k[y_{4}, y_{6}, y_{8},y_{10},y_{32}] \to E_{*}$$ taking $e_{i}$ to $e_{i}$ for $i=1,2,3,4$ and taking $y$ to $y_{7}.$ The $E_{\infty}$ page of $I_{*}$ is given by $I_{\infty} \cong F_{2} \otimes k[y_{4}, y_{6}, y_{8},y_{10},y_{32}]$ and $\alpha$ induces an injection $I_{\infty}^{i,j} \to E_{\infty}^{i,j}$ for all $i,j$ with $i+j \leq 17.$ For $n$ odd, $\alpha$ induces an isomorphism $0=I^{n-i,i}_{\infty} \cong E^{n-i,i}_{\infty}=0$ for all $i$ since $H^{*}_{\textup{H}}(BL/k)$ is concentrated in even degrees.

\indent Let $n$ be even. The $k$-dimension of $H_{\textrm{H}}^{n}(BL/k)$ is equal to the cardinality of the set $$S_{n}=\{(a,b,c,d,e) \in \mathbb{Z}_{\geq 0}^{5}: 2a+4b+6c+8d+10e=n\}.$$ For $0\leq i \leq 15$, set $V_{i,n} \coloneqq H^{(n-2i)/2}(BG, \Omega^{(n-2i)/2}).$ For $0\leq i \leq 15$, $\textup{dim}_{k} V_{i,n}$ is equal to the cardinality of the set $S_{i,n}=\{(a,b,c,d,e) \in \mathbb{Z}_{\geq 0}^{5}: 4a+6b+8c+10d+32e=n-2i\}.$ As all differentials in \ref{SSSpin11} are $0$ on $e_{1} \in H^{2}_{\textup{H}}((G/P)/k),$ $$V_{i,n} \cong V_{i,n} \otimes k\cdot e_{1}^{i} \subseteq E^{n-2i,2i}_{\infty}$$ for $0\leq i \leq 7$ and $n \leq 16.$

\indent Define a bijection $f_{n}:S_{n} \to \bigcup\limits_{i=0}^{15} S_{i,n}$ by 
$f_{n}(a,b,c,d,e)=(b,c,d,e,(a-i)/16)\in S_{i,n}$ for $a \equiv i \mod(16).$ Then \begin{equation} \label{dim of BLSpin11} \textup{dim}_{k} H_{\textrm{H}}^{n}(BL/k)=|S_{n}|=\sum_{i=0}^{15}|S_{i,n}|=\sum_{i=0}^{15}\textup{dim}_{k}V_{i,n}.\end{equation} Now assume that $n \leq 14.$ Then $f_{n}$ gives a bijection $$S_{n} \to \bigcup\limits_{i=0}^{7}S_{i,n}.$$ As $$\textup{dim}_{k} H_{\textrm{H}}^{n}(BL/k) \geq \sum_{i=0}^{7} E^{n-2i,2i}_{\infty}$$ and $V_{i,n} \subseteq E^{n-2i,2i}_{\infty}$ for $0\leq i \leq 7$, it follows that $V_{i,n} \cong E^{n-2i,2i}_{\infty}$ for $0\leq i \leq 7$ and $E^{n-2i,2i}_{\infty}=0$ for $i \geq 8.$ In particular, $E^{0,10}_{\infty} \cong k\cdot e_{1}^{5}.$ As mentioned above, we have $H^{m}_{\textup{H}}(BG/k)=0$ for $m=3,5,9.$ After adding a $k$-multiple of $e_{3}e_{1}^{2}$ to $e_{5},$ we can assume that $d_{7}(e_{5})=0.$ Then the isomorphism $E^{0,10}_{\infty} \cong k\cdot e_{1}^{5}$ implies that $d_{11}(e_{5}) \neq 0.$ Hence, $e_{5}$ is transgressive in \ref{SSSpin11} and $y_{11}\in H^{6}(BG, \Omega^{5})$ is a lifting of $d_{11}(e_{5})$ to $E_{2}^{11,0}.$

\indent Fix a variable $x.$ Let $J_{*}$ denote the spectral sequence with $E_{2}$ page $J_{2}=\Delta(e_{5})\otimes k[x]$ where $e_{5}$ has bidegree $(0,10),$ $x$ has bidegree $(11,0),$ and $e_{5}$ is transgressive with $d_{11}(e_{5}x^{i})=x^{i+1}$ for all $i.$ 

\[
\begin{tikzcd}
k\cdot e_{5} \arrow[dr, "d_{11}"] &  k \cdot e_{5}x \arrow[dr, "d_{11}"] & \cdots  \\
0 &  k \cdot x & k \cdot x^2 & \cdots
\end{tikzcd}
\]

As $e_{5}$ is transgressive in \ref{SSSpin11}, there exists a map of spectral sequences $J_{*} \to E_{*}$ taking $e_{5}$ to $e_{5}$ and $x$ to $y_{11}.$ Tensoring with the map $\alpha$ defined above, we get a map $$\alpha ': K_{*}\coloneqq I_{*} \otimes J_{*} \to E_{*}$$ which induces an isomorphism on the $0$th columns of the $E_{2}$ pages. The $E_{\infty}$ page of $K_{*}$ is given by $$K_{\infty} \cong I_{\infty} \cong F_{2} \otimes k[y_{4}, y_{6}, y_{8},y_{10},y_{32}].$$ As mentioned above, $\alpha$ and hence $\alpha '$ induce isomorphisms on $E^{n-i,i}_{\infty}$ terms for $n<16$ and injections on all $E_{\infty}$ terms on or below the line $i+j=17.$ Theorem \ref{Zeeman} implies that $\alpha '$ induces an isomorphism $K^{n,0}_{2} \to E^{n,0}_{2}$ for $n<16.$

\indent Next, we consider the filtration on $H^{16}_{\textup{H}}(BL/k)$ given by \ref{SSSpin11}. From \ref{dim of BLSpin11}, $$\textup{dim}_{k} H_{\textrm{H}}^{16}(BL/k)=1+\sum_{i=0}^{7}|S_{i,16}|=1+\sum_{i=0}^{7}\textup{dim}_{k} V_{i,16} \otimes k\cdot e_{1}^{i}.$$ We must then have either $d_{7}(e_{3}f)=y_{7}f=0\in H^{17}_{\textup{H}}(BG/k)$ for some $0 \neq f \in H^{10}_{\textup{H}}(BG/k)$ or $d_{11}(e_{5}g)=y_{11}g=d_{7}(e_{3})h=y_{7}h \in H^{17}_{\textup{H}}(BG/k)$ for some $0 \neq g \in H^{6}_{\textup{H}}(BG/k)$ and $h \in H^{10}_{\textup{H}}(BG/k).$ Let $a,b,c \in k$, not all zero, such that $ay_{11}y_{6}+by_{7}y_{10}+cy_{7}y_{4}y_{6}=0\in H^{17}_{\textup{H}}(BG/k).$

\indent The class $au_{11}u_{6}+bu_{7}u_{10}+cu_{7}u_{4}u_{6} \in H^{17}_{\textup{H}}(BSO(11)_{k}/k)$ pulls back to $ay_{11}y_{6}+by_{7}y_{10}+cy_{7}y_{4}y_{6}=0 \in H^{17}_{\textup{H}}(BG/k).$ Under the pullback map $$H^{*}_{\textup{H}}(BSO(11)_{k}/k) \to H^{*}_{\textup{H}}(BK/k)/\textup{rad} \cong  k[t_{1},\ldots,t_{5},s]/(t_{1}+\cdots+t_{5}),$$ $au_{11}u_{6}+bu_{7}u_{10}+cu_{7}u_{4}u_{6}$ maps to $asu_{10}u_{6}+bsu_{6}u_{10}+csu_{6}u_{4}u_{6}$, which equals $0$ since $Q \to K$ is split. Then $c=0$ and $a=b.$  Thus, the relation $y_{7}y_{10}+y_{6}y_{11}=0$ holds in $H^{*}_{\textup{H}}(BG/k)$ and $E^{6,10}_{\infty} \cong (k \cdot y_{6} \otimes e_{5}) \oplus (k \cdot y_{6} \otimes e_{1}^{5}).$ We now use the relation $y_{7}y_{10}+y_{6}y_{11}=0$ to define a new spectral sequence $L_{*}$ from $K_{*}.$

\indent Let $(y_{6}x+yy_{10}) \subset K_{2}$ denote the ideal generated by $y_{6}x+yy_{10}$ and let $L_{2} \coloneqq K_{2}/(y_{6}x+yy_{10}).$ Define the differentials $d'_{m}$ of $L_{*}$ so that $K_{2} \to L_{2}$ induces a map of spectral sequences $K_{*} \to L_{*}$ and $d'_{m}=0$ for $m>11.$ Then $\alpha ':K_{*} \to E_{*}$ induces a map of spectral sequences $\alpha '':L_{*} \to E_{*}.$ The $E_{\infty}$ page of $L_{*}$ is given by $$L_{\infty} \cong (F_{2} \otimes k[y_{4},y_{6},y_{8},y_{10},y_{32}]) \oplus (F_{2} \otimes y_{6}k[y_{4},y_{6},y_{8},y_{10},y_{32}] \otimes e_{5}).$$

\indent We now show by induction that $\alpha ''$ induces an isomorphism $L_{2}^{n,0} \to E_{2}^{n,0}$ for all $n.$ For $n<16,$ we have shown that $L_{2}^{n,0} \cong E_{2}^{n,0}.$ Now let $n\geq 16$ and assume that $\alpha ''$ induces an isomorphism $L_{2}^{m,0} \to E_{2}^{m,0}$ for all $m<n.$ First, suppose that $n$ is even. As $L_{2}^{m,0} \cong E_{2}^{m,0}$ for $m<n,$ $y_{7}g \neq 0 \in H^{*}_{\textup{H}}(BG/k)$ for all $0 \neq g \in H^{*}_{\textup{H}}(BG/k)$ with $|g|<n-7.$ Hence, for any $0 \neq g \in H^{m}_{\textup{H}}(BG/k)$ with $|g|=m<n-7,$ $g \otimes e_{3}e_{5} \in E_{2}^{m,16}\cong E_{7}^{m,16}$ is not in the kernel of the differential $d_{7}:E_{7}^{m,16} \to E_{7}^{m+7,10} \cong E_{2}^{m+7,10}.$ As $y_{7} \in H^{4}(BG,\Omega^{3})$ and $y_{11} \in H^{6}(BG,\Omega^{5}),$ $y_{7}z, y_{11}z \not \in \oplus_{i} H^{i}(BG,\Omega^{i})$ for all $z \in H^{*}_{\textup{H}}(BG/k).$ It follows that $\alpha ''$ induces an injection $L^{i,j}_{\infty} \to E^{i,j}_{\infty} $ for all $i,j$ with $m=i+j\leq n:$ $$L_{\infty}^{m-2i,2i} \cong V_{i,m} \otimes e_{1}^{i} \subseteq E_{\infty}^{m-2i,2i}$$ for $0 \leq i \leq 4,$ $$L_{\infty}^{m-2i,2i} \cong (V_{i,m} \otimes e_{1}^{i}) \oplus (y_{6}V_{i+3,m}\otimes e_{1}^{i-5}e_{5}) \subseteq E_{\infty}^{m-2i,2i}$$ for $5 \leq i \leq 7,$ and $$L_{\infty}^{m-2i,2i} \cong y_{6}V_{i+3,m}\otimes e_{1}^{i-5}e_{5} \subseteq E_{\infty}^{m-2i,2i}$$ for $8 \leq i \leq 12.$ The equality in \ref{dim of BLSpin11} then implies that $\alpha''$ induces isomorphisms $L_{\infty}^{i,j} \to E_{\infty}^{i,j}$ for all $i,j$ with $i+j\leq n.$ As mentioned above, $\alpha ''$ induces isomorphisms $0=L^{n+1-i,i}_{\infty} \to E^{n+1-i,i}_{\infty}=0$ for all $i$ since $n+1$ is odd. Theorem \ref{Zeeman} then implies that $\alpha ''$ induces an isomorphism $L^{n,0}_{2} \cong E^{n,0}_{2}=H^{n}_{\textup{H}}(BG/k).$

\indent Now assume that $n$ is odd. We have $0=L_{\infty}^{i,j} \cong E_{\infty}^{i,j}=0$ for all $i,j$ with $i+j=n.$ An argument similar to the one used above for when $n$ is even shows that $\alpha ''$ induces injections $L^{i,j}_{\infty} \to E^{i,j}_{\infty} $ for all $i,j$ with $i+j\leq n+1.$ Equation \ref{dim of BLSpin11} then implies that $\alpha''$ induces isomorphisms $L_{\infty}^{i,j} \to E_{\infty}^{i,j}$ for all $i,j$ with $i+j\leq n+1.$ It follows that $\alpha ''$ induces an isomorphism $L^{n,0}_{2} \cong E^{n,0}_{2}=H^{n}_{\textup{H}}(BG/k)$ by an application of Theorem \ref{Zeeman}. Thus, by induction, we have obtained that the $0$th row of $L_{2}$ is isomorphic to the $0$th row of $E_{2}:$ $$H_{\textup{H}}^{*}(BG/k)\cong k[y_{4}, y_{6}, y_{7}, y_{8}, y_{10},y_{11},y_{32}]/(y_{7}y_{10}+y_{6}y_{11}).$$ The Hodge spectral sequence for $BG$ degenerates by Proposition \ref{HodgeSSG}.

\end{proof}

\begin{corollary}
Let $G$ be a $k$-form of $\textup{Spin}(11).$ Then $$H_{\textup{H}}^{*}(BG/k)\cong k[y_{4}, y_{6}, y_{7}, y_{8}, y_{10},y_{11},y_{32}]/(y_{7}y_{10}+y_{6}y_{11})$$ where $|y_{i}|=i$ for $i=4,6,7,8,10,11,32.$ 
\end{corollary}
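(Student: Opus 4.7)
The plan is to argue by descent from the split case, following the same pattern as the preceding $k$-form corollaries (Corollary 2.2 for $G_2$, and the analogous corollaries for $\textup{Spin}(7), \textup{Spin}(8), \textup{Spin}(9), \textup{Spin}(10)$). Let $k_s$ denote the separable closure of $k$. Any $k$-form of $\textup{Spin}(11)$ becomes isomorphic to the split form after base change to $k_s$, so $BG \times_k \textup{Spec}(k_s) \cong B\textup{Spin}(11)_{k_s}$. The previous theorem therefore gives
$$H_{\textup{H}}^{*}((BG\times_{k}\textup{Spec}(k_{s}))/k_{s})\cong k_{s}[y'_{4}, y'_{6}, y'_{7}, y'_{8}, y'_{10}, y'_{11}, y'_{32}]/(y'_{7}y'_{10}+y'_{6}y'_{11}),$$
with $|y'_{i}|=i$. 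Since Hodge cohomology commutes with extensions of the base field, $H^{*}_{\textup{H}}(BG/k) \otimes_{k} k_{s} \cong H_{\textup{H}}^{*}((BG\times_{k}\textup{Spec}(k_{s}))/k_{s})$, so $\dim_{k} H^{i}(BG,\Omega^{j}) = \dim_{k_{s}} H^{i}(BG_{k_{s}},\Omega^{j})$ for all $i,j$.

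Next, I would choose elements $y_{i} \in H^{*}_{\textup{H}}(BG/k)$ of degree $i$ for $i \in \{4,6,7,8,10,11,32\}$ whose images in $H^{*}_{\textup{H}}(BG_{k_{s}}/k_{s})$ generate that $k_{s}$-algebra. The existence of such generators follows from the dimension count above together with faithful flatness of $k \to k_{s}$: pick any $k$-basis of the appropriate graded piece of $H^{*}_{\textup{H}}(BG/k)$ and observe that nonzero elements remain nonzero (and algebraically independent modulo the expected relation) after base change. These choices induce a $k$-algebra homomorphism
$$\varphi: k[y_{4},y_{6},y_{7},y_{8},y_{10},y_{11},y_{32}] \to H^{*}_{\textup{H}}(BG/k)$$
which becomes surjective after $\otimes_{k} k_{s}$, hence is surjective.

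It remains to identify the kernel of $\varphi$. The degree $17$ part of $\ker(\varphi)$ is a $k$-vector space whose dimension, by the base change above, equals that of the degree $17$ part of the kernel over $k_{s}$, namely one. A nonzero generator has the form $\alpha y_{7}y_{10}+\beta y_{6}y_{11}$ for some $\alpha,\beta \in k$, since these are the only two monomials of bidegree $(9,8)\oplus(8,9)$ in the polynomial ring that can hit degree $17$. Both $\alpha$ and $\beta$ must be nonzero: if say $\beta=0$, then $y_{7}y_{10}=0$ in $H^{*}_{\textup{H}}(BG/k)$, but this is false since $y'_{7}y'_{10}$ is nonzero in $H^{*}_{\textup{H}}(BG_{k_{s}}/k_{s})$ (it is not a multiple of the relation $y'_{7}y'_{10}+y'_{6}y'_{11}$). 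Replacing $y_{11}$ by $(\alpha/\beta)y_{11}$, we may assume the relation reads $y_{7}y_{10}+y_{6}y_{11}=0$.

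The induced surjection $k[y_{4},y_{6},y_{7},y_{8},y_{10},y_{11},y_{32}]/(y_{7}y_{10}+y_{6}y_{11}) \to H^{*}_{\textup{H}}(BG/k)$ then becomes an isomorphism after $\otimes_{k} k_{s}$ by the previous theorem, and hence is an isomorphism over $k$ by faithful flatness. The only subtle point in this outline is ensuring the relation has the stated form up to rescaling of generators, which is handled by the one-dimensionality of the degree $17$ relation space; everything else is routine descent along $k \to k_{s}$.
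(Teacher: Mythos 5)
Your overall strategy is exactly the one the paper relies on for all of its $k$-form corollaries: base change to $k_{s}$, use that Hodge cohomology commutes with field extension, lift generators, and compare Hilbert functions. The one step that is genuinely wrong as written is the identification of the degree $17$ relation. You claim the only degree $17$ monomials that can occur are $y_{7}y_{10}$ and $y_{6}y_{11}$ ``by bidegree,'' but the monomial $y_{4}y_{6}y_{7}$ also has total degree $17$ and, moreover, the \emph{same} Hodge bidegree: $y_{4}\in H^{2}(\Omega^{2})$, $y_{6}\in H^{3}(\Omega^{3})$, $y_{7}\in H^{4}(\Omega^{3})$ give $y_{4}y_{6}y_{7}\in H^{9}(\Omega^{8})$, exactly as for $y_{7}y_{10}$ and $y_{6}y_{11}$. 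So neither the total grading nor the Hodge bigrading excludes it, and the degree $17$ kernel generator a priori has the form $\alpha y_{7}y_{10}+\beta y_{6}y_{11}+\gamma y_{4}y_{6}y_{7}$.

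The gap is repairable, but it needs an argument you did not give. First, $\alpha\neq 0$ and $\beta\neq 0$: if, say, $\alpha=0$, the relation factors as $y_{6}(\beta y_{11}+\gamma y_{4}y_{7})=0$; since $y_{7}y_{10}+y_{6}y_{11}$ is irreducible, the ring $k_{s}[y'_{\ast}]/(y'_{7}y'_{10}+y'_{6}y'_{11})$ is a domain, so a product of two nonzero elements cannot vanish after base change, a contradiction (and symmetrically for $\beta=0$). Then absorb the extra term by a change of generators: since $\alpha y_{7}y_{10}+\gamma y_{4}y_{6}y_{7}=y_{7}(\alpha y_{10}+\gamma y_{4}y_{6})$, replace $y_{10}$ by $\alpha y_{10}+\gamma y_{4}y_{6}$ and rescale $y_{11}$ by $\beta$; the new elements are still generators and the relation becomes $y_{7}y_{10}+y_{6}y_{11}=0$. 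With that correction, your surjectivity-plus-Hilbert-function comparison closes the argument as intended.
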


\end{document}